\theoremstyle{plain}
	\newtheorem{thm}{Theorem}[section]
	\newtheorem{lem}[thm]{Lemma}
	\newtheorem{prop}[thm]{Proposition}
	\newtheorem{cor}[thm]{Corollary}
\theoremstyle{definition}
	\newtheorem{dfn}[thm]{Definition}
\theoremstyle{remark}
	\newtheorem{rem}[thm]{Remark}
\newcommand{\fig}[3][width=12cm]{
\begin{figure}[htbp]
	\centering 
	\includegraphics[#1,clip]{#2} 
	\caption{#3} 
	\label{fig:#2}
\end{figure}}
\newcommand{\twofig}[6]{
\begin{figure}[htbp]
\begin{tabular}{cc}
\begin{minipage}{0.5\hsize}
	\centering 
	\includegraphics[#1,clip]{#2} 
	\caption{#3} 
	\label{fig:#2}
\end{minipage}
\begin{minipage}{0.5\hsize}
	\centering 
	\includegraphics[#4,clip]{#5} 
	\caption{#6} 
	\label{fig:#5}
\end{minipage}
\end{tabular}
\end{figure}}
\DeclareMathOperator{\diag}{diag}
\DeclareMathOperator{\Ell}{Ell}
\DeclareMathOperator{\Span}{span}
\DeclareMathOperator{\tr}{tr}
\DeclareMathOperator{\vol}{vol}
\begin{document}

\title{A mathematical model of network elastoplasticity}
\subjclass[2020]{Primary 92E10, 05C10; Secondary 05C50, 05C81, 58E20}
\keywords{Polymer networks, periodic weighted graphs, discrete harmonic maps}
\date{}

\author{Hiroki KODAMA}
\address[H. Kodama]{WPI - Advanced Institute for Materials Research (WPI-AIMR), Tohoku University, 
2-1-1 Katahira, Aoba-ku, Sendai-shi, Miyagi 980-8577, Japan 
(RIKEN iTHEMS, 2-1 Hirosawa, Wako-shi, Saitama 351-0198, Japan)}
\email{kodamahiroki@gmail.com}

\author{Ken'ichi YOSHIDA}
\address[K. Yoshida]{Department of Mathematics, Saitama University, 255 Shimo-Okubo, Sakura-ku, Saitama-shi, Saitama 338-8570, Japan}
\email{kyoshida@mail.saitama-u.ac.jp}

\begin{abstract}
We introduce a mathematical model, based on networks, for the elasticity and plasticity of materials. 
We define the tension tensor for a periodic graph in a Euclidean space, 
and we show that the tension tensor expresses elasticity under deformation. 
Plasticity is induced by local moves on a graph. 
The graph is described in terms of the weights of edges, 
and we discuss how these weights affect the plasticity. 
\end{abstract}

\maketitle

\section{Introduction}
\label{section:intro}

The field of topological crystallography was initially introduced by Kotani and Sunada 
\cite{kotanisunada00albanese,kotanisunada00jacobian,kotanisunada01,kotanisunada03,sunada12} 
as a part of discrete geometric analysis. 
One of the main objects of their study is a \emph{net}, 
that is, a periodic graph realized in $\mathbb{R}^{N}$. 
The energy of a net is defined as an analogue of the Dirichlet energy of a Riemannian manifold. 
In other words, one can say that the energy of a net is the total potential energy of springs, viewing edges as linear springs with rest lengths equal to zero. 
Harmonic and standard nets are defined as energy-minimizing nets under certain conditions, 
and they are regarded as equilibrium states. 
Nets have been used as models of crystals.

In this paper, we suggest that 
the energy of a net induces a model of hyperelastic materials. 
Here, hyperelasticity is the property from which 
stress under deformation is derived using an energy density function. 
To describe the deformation of a net, 
we introduce the notion of a \emph{tension tensor}, 
which is regarded as multivariate energy. 
Further, a standard net is characterized by the tension tensor. 
We show that the Cauchy stress tensor is also expressed by the tension tensor. 
Furthermore, if the graph structure is preserved, 
the elasticity at the macro-scale is also determined by the tension tensor; 
otherwise, a departure from elasticity, 
known as plasticity, occurs. 
To describe the manner in which the graph structure changes, 
we consider two types of local moves: contraction and splitting. 
We define a condition for a local move 
and introduce two models of deformation concerning plasticity. 
This enables us to draw the stress--strain curve.

Our model is motivated by the structure of thermoplastic elastomers (TPEs). 
A TPE is a polymeric material with the rubber elasticity and is remoldable at high temperatures. 
A typical TPE consists of ABA triblock copolymers, 
in which monomers of types A and B are arranged in a sequence such as A$\cdots$AB$\cdots$BA$\cdots$A. 
ABA triblock copolymers of a certain type form two domains 
consisting of monomers A and B. 
This structure is called microphase separation. 
We consider a structure such that 
each component consisting of monomer A is a ball, 
as shown in Figure~\ref{fig: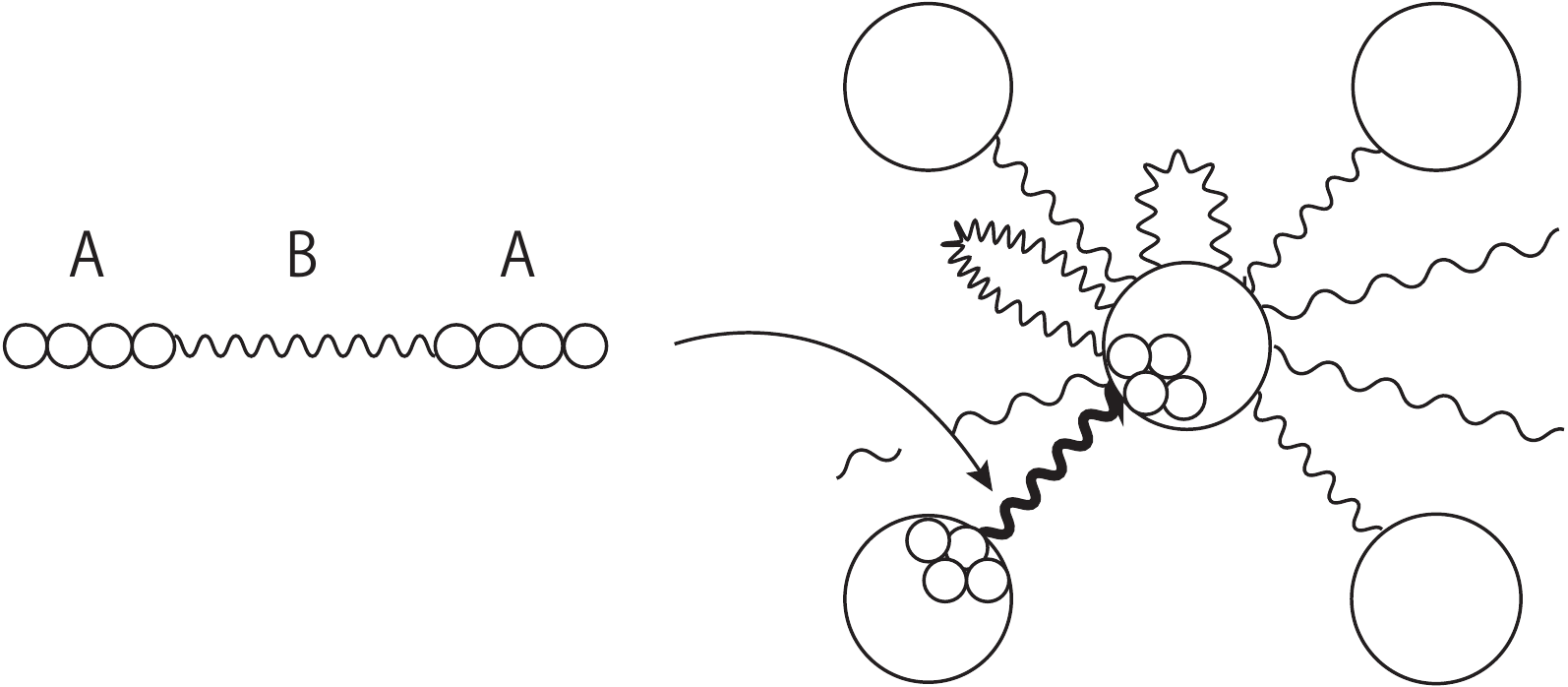}. 
This is called a spherical structure. 
The domains consisting of monomers A and B are called hard and soft domains, respectively. 
The theoretical and numerical treatments of block copolymers are explained in the book by Fredrickson~\cite{fredrickson06}, to which we refer the reader for further information. 

\fig[width=10cm]{copolymer.pdf}{A spherical structure formed from ABA triblock copolymers}

In our model, the hard and soft domains correspond to the vertices and edges, respectively. 
More precisely, a hard domain is a vertex, 
and a polymer chain in the soft domain is an edge. 
The endpoints of an edge are the (possibly single) hard domains 
that contain monomer A of the copolymer. 
The obtained graph may have loops and multiple edges.

The network structure of polymers induces rubber elasticity. 
The random motion of polymer chains in the soft domain gives rise to entropic forces. 
A hard domain functions as a cross-link. 
In our approximation, 
we ignore the maximal length of the chain and the interaction between chains. 
If a chain moves randomly, 
the tension on the chain is proportional to the distance between the endpoints. 
This setting is consistent with the definition of the energy of a net. 
Suppose that the polymers can move freely while preserving the network structure. 
Additionally, we obtain a harmonic net in equilibrium. 
Since harmonicity is preserved by affine deformation, 
the affine assumption in the classical theory of rubber elasticity holds. 
Additional details of rubber elasticity are provided in the book by Treloar~\cite{treloar75}.

\fig[width=12cm]{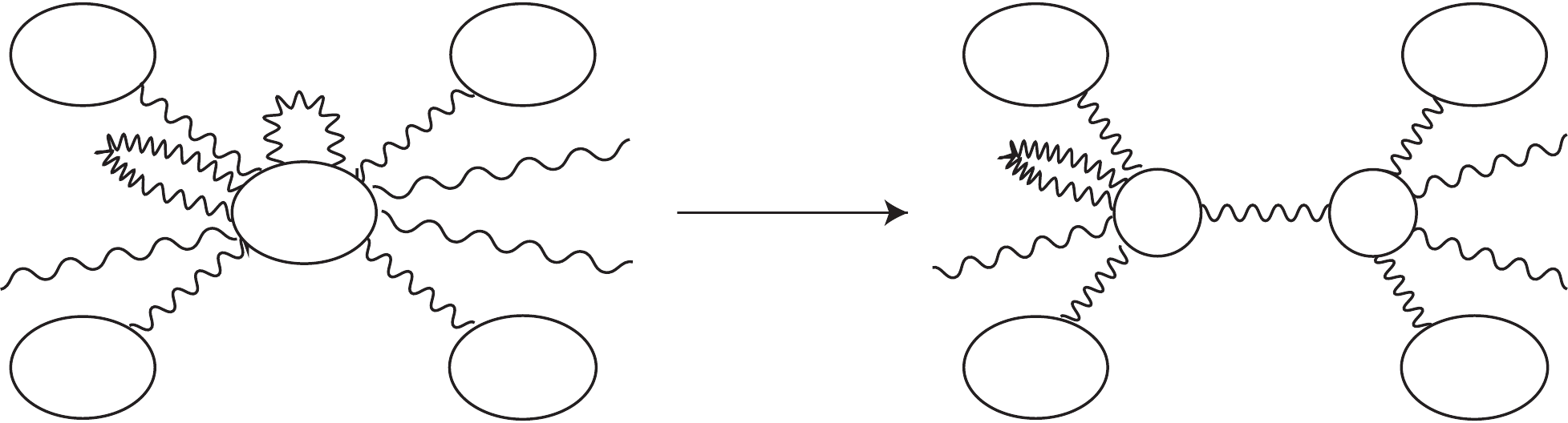}{Splitting of a hard domain. In this example, one loop becomes a non-loop edge between the new domains.}

The hard domains of a TPE are less robust than 
the cross-links of vulcanized rubber 
because each hard domain is aggregated by intermolecular forces. 
The network structure of a TPE may change under deformation, 
as observed in simulation \cite{aoyagihondadoi02,moritamiyamotokotani20} 
and by conducting experiments \cite{liuliangnakajima20}. 
For example, a hard domain may split 
as shown in Figure~\ref{fig:polymer-split.pdf}. 
Further, a loop may become a non-loop edge between the new domains. 
Conversely, two hard domains may contract. 
These moves cause plasticity. 
Although other moves may occur, 
we consider only contractions and splittings.


In Section~\ref{section:def}, we give preliminary definitions of nets. 
The graphs we use are weighted, and their weights can be regarded as the number of edges. 
Different types of polymers may contribute different weights. 

In Section~\ref{section:tension}, we introduce the tension tensor. 
This is visualized by an ellipsoid.

In Section~\ref{section:stress}, we consider the elasticity of nets under deformation. 
Based on a physical argument, the Cauchy stress tensor is derived from the tension tensor. 
We also consider the stress under uniaxial extension. 
The Young's modulus of a standard net is determined by the energy per unit volume. 

In Section~\ref{section:move}, we define local moves. 
The contraction of two vertices is a natural operation. 
A splitting of a vertex is an inverse operation of contraction. 
The sum of weights is preserved in our model. 
The conditions for the occurrence of these moves are provided by the realization of a graph. 
The local tension tensor is used to determine whether a vertex splits or not. 
We suspend the physical validity of the conditions.

In Section~\ref{section:model}, we introduce two models we call fast and slow deformations. 
Although these models reflect the dependence on the speed of deformation, we consider only the two extreme cases. 
Subsequently, we obtain the stress--strain curve, 
which is merely piecewise continuous. 
We suppose that 
the local moves under deformation finish in finitely many times. 
For example, after a vertex splits, 
the inverse contraction should not occur immediately. 
We show only a sufficient condition to avoid such repetitions.

Section~\ref{section:weight} is devoted to mathematical results. 
In there, 
we consider the extent to which the weight of an edge affects the harmonic realization. 
For the sake of theoretical consideration, 
we allow weights to be non-negative real numbers and continuously deformed. 
When the weight of an edge in a harmonic net becomes large, 
the limit of nets is obtained by the contraction of this edge. 
In Theorem~\ref{thm:loss}, we show a mathematical result 
on the relation between the edge length and the difference of the tension tensor. 
Moreover, in Theorem~\ref{thm:estimate} we obtain a lower bound for the edge length. 

In Section~\ref{section:plastic}, we define the number called the \emph{energy loss ratio} 
to measure the plasticity of nets. 
This provides an estimate of the permanent strain for uniaxial tension. 
We observe simple examples, 
which suggest the followings: 
\begin{enumerate}[(i)]
\item 
a material has lower plasticity if the proportion of loops is large; 
\item 
a material with lower plasticity is obtained by blending two materials. 
\end{enumerate}
Continuous deformation of weights highlights these tendencies.

In Section~\ref{section:ex}, we give examples of deformation. 
We use a periodic graph obtained from the hexagonal lattice. 
The nets obtained by deformation depend on the stretching direction.

\section{Definitions}
\label{section:def}

Based on the formulation in \cite{kotanisunada01,sunada12}, 
we prepare some notions in topological crystallography. 
Let $X = (V,E,w)$ be an (abstract) weighted graph,
which is defined by the vertex set $V$ and the edge set $E$ 
with maps $o, t \colon E \to V$, and $\iota \colon E \to E$ 
such that $\iota^{2} = id$, $\iota (e) \neq e$, and 
$o(\iota (e)) = t(e)$ for any $e \in E$. 
The maps $o$ and $t$ associate the origin and the terminal of an edge, respectively. 
The map $\iota$ reverses the orientation of an edge. 
We allow a loop (edge $e$ such that $o(e) = t(e)$) 
and a multi-edge (edges with common terminal points). 
The weight function $w \colon E \to \mathbb{R}_{\geq 0}$ 
satisfies $w (\iota (e)) = w(e)$ for $e \in E$. 
We regard the weight of an edge as the number of edges. 
Hence, we may replace an edge $e_{0}$ with the union of edges $e_{1}$ and $e_{2}$ 
if $o(e_{0}) = o(e_{1}) = o(e_{2}), t(e_{0}) = t(e_{1}) = t(e_{2})$, and $w(e_{0}) = w(e_{1}) + w(e_{2})$. 
The weight function is often omitted in the notation. 
The degree of a vertex $v \in V$ is defined by $\deg (v) = \sum_{o(e)=v} w(e)$. 
Note that the weight of a loop contributes twice to the degree of its endpoint.

A graph $X=(V,E)$ is a finite graph if $V$ and $E$ are finite sets. 
Otherwise, $X$ is an infinite graph. 
We can naturally identify $X$ with a one-dimensional complex. 
Note that two elements $e$ and $\iota (e)$ in $E$ correspond to 
a single 1-cell in the complex. 
We may reduce the complex by removing the zero-weight edges. 
If this reduced complex is connected, we say that the graph is connected.

We consider an infinite connected graph $X$.
For $N \geq 1$, suppose that $L = \mathbb{Z}^{N}$ acts on $X$ 
as (weight-preserving) automorphisms of the graph, 
the quotient map $\omega \colon X \to X/L = (V/L, E/L)$ is a covering, 
and $X/L$ is a finite graph. 
Then, we say that $X$ is a \emph{periodic graph}, 
and $L$ is a \emph{period lattice} for $X$. 
A map $\Phi \colon V \to \mathbb{R}^{N}$ 
is called a \emph{periodic realization} of $X$ in $\mathbb{R}^{N}$ 
if there exists an injective homomorphism 
$\rho \colon L \hookrightarrow \mathbb{R}^{N}$ as $\mathbb{Z}$-modules 
satisfying that 
\begin{enumerate}[(i)]
	\item $\Phi (\gamma v) = \Phi (v) + \rho (\gamma)$ for any $v \in V$ and $\gamma \in L$, and 
	\item $\rho (L)$ is a lattice subgroup of $\mathbb{R}^{N}$.
\end{enumerate}
Condition (i) means that $\Phi$ is $L$-equivariant.   
We call $\rho$ and $\rho (L)$, respectively, 
the \emph{period homomorphism} and the \emph{period lattice} for $\Phi$.

\begin{dfn}
The pair $(X, \Phi)$ is called as a \emph{net} in $\mathbb{R}^{N}$ 
if $\Phi$ is a periodic realization of a periodic graph $X$ in $\mathbb R^{N}$. 
\end{dfn}

A periodic realization $\Phi$ maps an edge $e \in E$ to a vector 
\[
\bm{v}_{\Phi}(e) = \Phi (t(e)) - \Phi (o(e))
\]
in $\mathbb{R}^{N}$. 
Since $\bm{v}_{\Phi} (\gamma e) = \bm{v}_{\Phi} (e)$ for $\gamma \in L$, 
we obtain $\bm{v}_{\Phi} \colon E/L \to \mathbb{R}^{N}$ 
by $\bm{v}_{\Phi} (\omega (e)) = \bm{v}_{\Phi} (e)$. 
We often write $\Phi$ instead of $\bm{v}_{\Phi}$ by abuse of notation. 
If $e$ is a loop, then $\Phi (e) = 0$.

We define the energy of a net 
and consider energy-minimizing realizations. 
Note that our definition of the energy is slightly different from that in \cite[\S 7.4]{sunada12}, 
where the energy normalized by the volume is defined.

\begin{dfn}
\label{dfn:energy}
The \emph{energy} (per period) of a net $(X, \Phi)$ is defined as follows: 
\[
\mathcal{E}(X, \Phi) = 
\frac{1}{2}\sum_{e \in E/L} w(e) \| \Phi (e) \|^{2}. 
\]
In other words, when we regard edges as springs, the energy is two times the total potential energy of linear springs with rest length equal to zero and elasticity constant given by the edge weight. 
Note that we count the segment between points $P$ and $Q$ twice in the summation,
as edges from $P$ to $Q$ and from $Q$ to $P$. 
\end{dfn}

\begin{dfn}
\label{dfn:harmonic}
A periodic realization $\Phi$ of $X$ is called \emph{harmonic} 
if the energy $\mathcal{E}(X, \Phi)$ is minimal among the periodic realizations of $X$ 
with the fixed period homomorphism $\rho$. 
Then, we call $(X, \Phi)$ a harmonic net. 
\end{dfn}

\begin{dfn}
A periodic realization $\Phi$ of $X$ is called \emph{standard} 
if the energy $\mathcal{E}(X, \Phi)$ is minimal among the periodic realizations of $X$ 
with the fixed covolume $\vol(\mathbb{R}^{N}/\rho (L))$. 
Then, we call $(X, \Phi)$ a standard net. 
\end{dfn}

Remark that when $\vol(\mathbb{R}^{N}/\rho (L))=\vol(\mathbb{R}^{N}/\tilde{\rho} (L))$, 
there exists a volume preserving linear transformation $A\in SL(N,\mathbb{R})$ satisfying $A\circ\rho = \tilde{\rho}$. 
Therefore a periodic realization is standard if its energy is minimal among
its volume-preserving linear transformations.

Clearly, a standard realization is harmonic. 
A harmonic realization is characterized by a local condition. 
We characterize a standard realization in Section~\ref{section:tension}. 

\begin{thm}[\cite{sunada12} Theorem 7.3]
\label{thm:harmonic}
A periodic realization $\Phi$ is harmonic if and only if 
$\sum_{o(e)=v} w(e) \Phi (e) = 0$ for any $v \in V$. 
\end{thm}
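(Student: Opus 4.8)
The plan is to treat the energy $\mathcal{E}(X,\Phi)$ as a function of the finitely many free vertex positions in a fundamental domain and to find its critical points by a first-variation (calculus) argument. Since the period homomorphism $\rho$ is fixed, and $\Phi$ is $L$-equivariant, the realization is determined by the positions $\Phi(v)$ for $v$ ranging over a set of representatives of $V/L$. Thus the energy becomes a function of finitely many vectors in $\mathbb{R}^{N}$. Because the energy is a nonnegative quadratic form in these variables (a sum of squared norms with nonnegative weights), it is convex, so a realization is harmonic, i.e. energy-minimizing for fixed $\rho$, precisely when it is a critical point. This reduces the \emph{if and only if} to computing where the gradient vanishes.

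First I would fix a vertex $v_{0}\in V$ and perturb $\Phi$ only at $v_{0}$ and its $L$-translates, keeping the perturbation equivariant: set $\Phi_{s}(\gamma v_{0}) = \Phi(\gamma v_{0}) + s\,\bm{u}$ for all $\gamma\in L$ and a test vector $\bm{u}\in\mathbb{R}^{N}$, leaving all other vertices fixed. The key point is to identify which edges in the sum over $E/L$ have $v_{0}$ (up to $L$) as an endpoint, and to track how $\|\Phi_{s}(e)\|^{2}$ depends on $s$ for each such edge; an edge with origin $v_{0}$ contributes a term whose derivative at $s=0$ involves $\Phi(e)$ with a sign, while the equivariance convention $\bm{v}_{\Phi}(\gamma e)=\bm{v}_{\Phi}(e)$ ensures the computation is well-defined on $E/L$. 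Differentiating and setting $\tfrac{d}{ds}\mathcal{E}(X,\Phi_{s})\big|_{s=0}=0$ for all $\bm{u}$ yields the stated condition $\sum_{o(e)=v_{0}} w(e)\Phi(e)=0$.

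The step I expect to require the most care is the bookkeeping of edges at $v_{0}$, specifically handling loops, multi-edges, and the doubling convention $w(\iota(e))=w(e)$ correctly so that each geometric segment is counted consistently. A loop at $v_{0}$ satisfies $\Phi(e)=0$ and contributes nothing, but one must check that moving $v_{0}$ moves both endpoints of the loop simultaneously so no spurious term appears; similarly, an edge $e$ with $t(e)=v_{0}$ but $o(e)\neq v_{0}$ enters through its reverse $\iota(e)$ with $o(\iota(e))=v_{0}$, and the symmetry of $w$ must be invoked to see these are accounted for by the single condition at $v_{0}$. Once this combinatorial accounting is done, the derivative of each squared-norm term is the routine $\frac{d}{ds}\|\Phi(e)+s\bm{u}\|^{2}\big|_{s=0}=2\langle \Phi(e),\bm{u}\rangle$ (with the appropriate sign depending on whether $\bm{u}$ acts on the origin or terminal), and collecting terms gives $\langle \sum_{o(e)=v_{0}} w(e)\Phi(e),\,\bm{u}\rangle=0$. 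Since $\bm{u}$ is arbitrary, the vector in the inner product must vanish, and running this over all vertex orbits completes both directions.
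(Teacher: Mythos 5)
Your argument is correct: the paper itself gives no proof of this statement (it is quoted from Sunada's book, Theorem 7.3), and your first-variation argument --- energy as a convex quadratic function of the representative vertex positions, so that harmonic $\Leftrightarrow$ critical point, with the gradient at $v$ equal to $-2\sum_{o(e)=v}w(e)\Phi(e)$ after pairing each edge with its reverse --- is the standard proof and is sound. The only bookkeeping case you do not explicitly flag is an edge joining $v$ to a translate $\gamma v$: under the equivariant perturbation both endpoints move, so such an edge contributes nothing to the derivative, and correspondingly it cancels against the edge to $\gamma^{-1}v$ in the sum $\sum_{o(e)=v}w(e)\Phi(e)$, so the equivalence is unaffected.
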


From this theorem, it directly follows that 
a linear transformation of a harmonic representation is also harmonic.

\begin{cor}
\label{cor:harmonic}
Suppose that 
$\Phi$ is a periodic realization 
and $A \in GL(N, \mathbb{R})$ is a linear transformation
Then the composition $A \circ \Phi$ is a harmonic realization 
if and only if $\Phi$ is harmonic.
\end{cor}

\begin{rem}
One might think that in Definition~\ref{dfn:energy}, 
the rest lengths of the springs should be positive, not zero. 
However, we have assumed the rest lengths to be zero for two reasons. 
The first reason is due to statistical mechanics. 
A chain in a TPE is not taut like a helical spring, 
but it fills space randomly. 
Therefore, the tension on the chain is proportional to the distance between the endpoints. 
The second reason is a mathematical one. 
We will transform harmonic nets by continuous linear transformations in the following sections. 
However, in order for the result of any linear transformation to be harmonic again, 
the natural length of the spring must be zero 
(see Corollary~\ref{cor:harmonic}). 
\end{rem}

\begin{rem}
In this paper, we use the term `net' as a periodically realized network 
in $\mathbb{R}^N$. 
In the book of Wells~\cite{wells77}, 
who initiated a systematic study of crystal structures as networks, 
a connected simple periodic graph with straight edges in a Euclidean space 
was called a net, and we follow this convention. 
Note that in the terminology of \cite{sunada12}, 
a net is called a topological crystal. 
\end{rem}

\section{Tension tensor}
\label{section:tension}

In our mathematical model, a net represents the structure of TPE chains. 
Consider the tension caused by the structure. 
Indeed, a stretched TPE must have tension in the direction in which the structure is stretched. 
For example, the net in Figure~\ref{fig: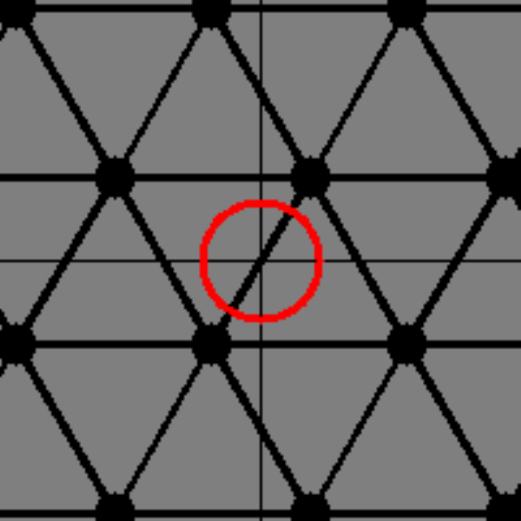} has a symmetric shape. 
Thus, it has no tension in any direction. 
In contrast, the net in Figure~\ref{fig: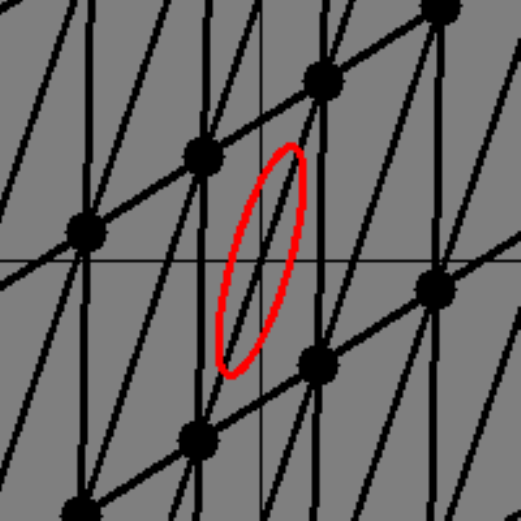} seems to be stretched 
from the top right to the bottom left. 
However, what can we say about a more complicated net such as that 
in Figure~\ref{fig: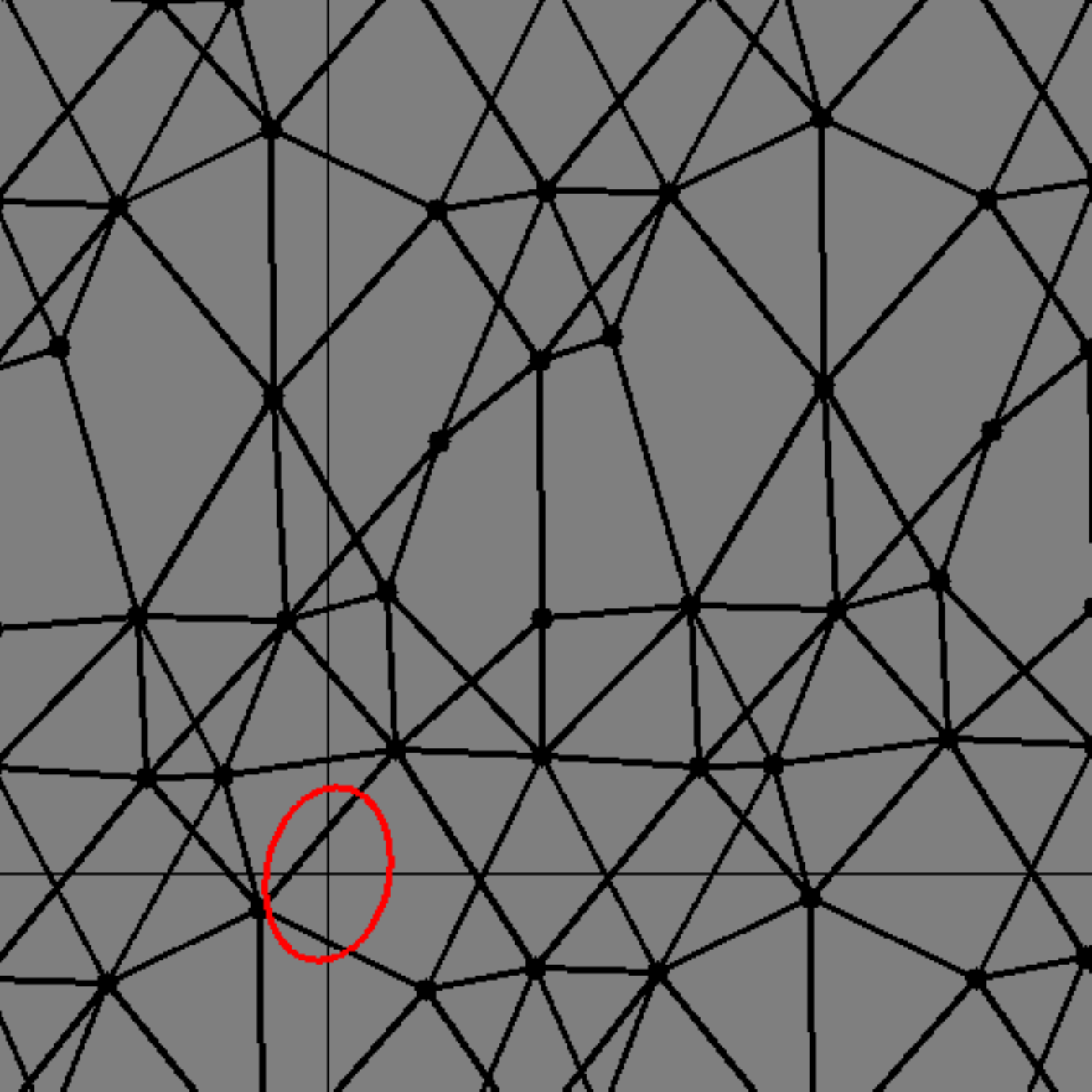}?

To answer this question, we introduce a matrix named a \emph{tension tensor}. 
Essentially, the tension tensor denotes the energy of a net 
with information of the direction along which the net is stretched, or its `directed energy'. 
We will observe that the tension tensor can be visualized as an ellipse (or ellipsoid), 
such as the ellipses 
in Figures~\ref{fig:HK_symmetricTT.pdf}, \ref{fig:HK_stretchedTT.pdf}, and \ref{fig:HK_complicatedTT.pdf}.

Through computer simulation for the deformation of two-dimensional nets, we make the following observations: 
\begin{enumerate}[(i)]
	\item when we stretch a net and the graph structure does not experience any change, 
	the ellipse of the tension tensor also stretches in the same direction; and 
	\item when the graph structure changes, 
	the ellipse of the tension tensor becomes round. 
\end{enumerate}
The former immediately follows from the definition. 
The latter can be verified by Theorem~\ref{thm:loss}.

\twofig{width=6cm}{HK_symmetricTT.pdf}{A symmetric net with tension ellipse}{width=6cm}{HK_stretchedTT.pdf}{A stretched net with tension ellipse}
\fig[width=11cm]{HK_complicatedTT.pdf}{A complicated net with tension ellipse}

\subsection{Definition of the tension tensor}

\begin{dfn}
For a net $(X,\Phi)$, we define the local and global \emph{tension tensors} as follows: 
For a vertex $v$ of $X$ or $X/L$, 
the local tension tensor is defined by 
\[
\mathcal{T}(v) = \sum_{o(e)=v} w(e) \Phi (e)^{\otimes 2}, 
\]
where 
\[
\begin{pmatrix}
x_{1} \\ \vdots \\ x_{N}
\end{pmatrix}^{\otimes 2}
=
\begin{pmatrix}
x_{1} \\ \vdots \\ x_{N}
\end{pmatrix}
\otimes 
\begin{pmatrix}
x_{1} \\ \vdots \\ x_{N}
\end{pmatrix}
= 
\begin{pmatrix}
x_{1} \\ \vdots \\ x_{N}
\end{pmatrix}
(x_{1}, \dots , x_{N}) = 
\begin{pmatrix}
x_{1}^{2} & \cdots & x_{1}x_{N} \\
\vdots & \ddots & \vdots \\
x_{N}x_{1} & \cdots & x_{N}^{2}
\end{pmatrix}.
\] 
The global tension tensor (per period) is defined by 
\[
\mathcal{T}(X, \Phi) = \frac{1}{2} \sum_{v \in V/L} \mathcal{T}(v). 
\]
\end{dfn}

\begin{prop}
\label{prop:trace}
It holds that 
$\tr(\mathcal{T}(X, \Phi)) = \mathcal{E}(X, \Phi)$.
\end{prop}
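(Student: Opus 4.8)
The plan is to reduce the statement to the elementary fact that the trace of a rank-one outer product recovers the squared norm, and then to track the combinatorics of the summations so that the two factors of $1/2$ match. First I would compute the trace of the basic building block. For any edge $e$, writing $\Phi(e) = (x_{1}, \dots, x_{N})^{\mathsf{T}}$, the matrix $\Phi(e)^{\otimes 2}$ has diagonal entries $x_{1}^{2}, \dots, x_{N}^{2}$, so that
\[
\tr\bigl(\Phi(e)^{\otimes 2}\bigr) = \sum_{i=1}^{N} x_{i}^{2} = \|\Phi(e)\|^{2}.
\]
This is the only genuinely computational ingredient, and it is immediate from the explicit formula for $(\,\cdot\,)^{\otimes 2}$ given in the definition.

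Next I would use the linearity of the trace to pass this identity through the definition of the local tension tensor, obtaining
\[
\tr(\mathcal{T}(v)) = \sum_{o(e)=v} w(e)\,\tr\bigl(\Phi(e)^{\otimes 2}\bigr) = \sum_{o(e)=v} w(e)\,\|\Phi(e)\|^{2}
\]
for each vertex $v$. Summing over a fundamental domain and inserting the factor $1/2$ from the definition of the global tension tensor then gives
\[
\tr(\mathcal{T}(X, \Phi)) = \frac{1}{2} \sum_{v \in V/L} \sum_{o(e)=v} w(e)\,\|\Phi(e)\|^{2}.
\]

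The one point that requires a moment's care — and the only place where an error could creep in — is the bookkeeping of the double summation. Here I would observe that each edge $e \in E/L$ has a unique origin $o(e)$, so iterating over all vertices $v \in V/L$ and, for each, over all edges with $o(e)=v$ visits every edge of $E/L$ exactly once. Hence the double sum collapses to $\sum_{e \in E/L} w(e)\,\|\Phi(e)\|^{2}$, and the result reads $\tr(\mathcal{T}(X, \Phi)) = \tfrac{1}{2}\sum_{e \in E/L} w(e)\|\Phi(e)\|^{2} = \mathcal{E}(X, \Phi)$. I would emphasize that the orientation convention is consistent on both sides: the energy already counts each geometric segment twice, via $e$ and $\iota(e)$, and the same pair of oriented edges appears in the tension-tensor sum (recorded at the two vertices $o(e)$ and $o(\iota(e))=t(e)$), so the factors of $1/2$ in $\mathcal{E}$ and in $\mathcal{T}(X,\Phi)$ correspond exactly and no spurious factor of $2$ survives. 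There is no substantial obstacle; the entire argument is a linearity-plus-reindexing computation.
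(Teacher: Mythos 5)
Your proof is correct and follows the same route as the paper's: compute $\tr(\Phi(e)^{\otimes 2}) = \|\Phi(e)\|^{2}$, use linearity of the trace, and collapse the double sum over vertices and outgoing edges into a single sum over $E/L$, noting that $e$ and $\iota(e)$ are counted as distinct on both sides. The extra remark about the matching factors of $1/2$ is exactly the point the paper flags in its closing note, so nothing is missing.
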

\begin{proof}
\begin{align*}
\tr(\mathcal{T}(X, \Phi)) & = 
\frac{1}{2} \sum_{v \in V/L} \sum_{o(e)=v} w(e) \tr(\Phi (e)^{\otimes 2}) \\ 
& = \frac{1}{2} \sum_{e \in E/L} w(e) \| \Phi (e) \|^{2} \\
& = \mathcal{E}(X, \Phi). 
\end{align*}
Note that two elements $e, \iota (e) \in E/L$ are distinguished. 
\end{proof}

The following characterization of a standard realization follows from \cite[Theorem 7.5]{sunada12}. 

\begin{thm}
\label{thm:standard}
A periodic realization $\Phi$ is standard if and only if it is harmonic 
and the global tension tensor $\mathcal{T}(X, \Phi)$ is a constant multiple of the identity matrix. 
\end{thm}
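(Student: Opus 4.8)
The plan is to reduce the minimization defining a standard realization to a minimization over the group $SL(N,\mathbb{R})$ of volume-preserving linear transformations, and then to solve that minimization explicitly by the arithmetic--geometric mean inequality. First I would record the reduction. Since a standard realization minimizes energy over a strictly larger class than a harmonic one, it is harmonic, as already noted in the text. Moreover, by Corollary~\ref{cor:harmonic} every $A\circ\Phi$ with $A\in SL(N,\mathbb{R})$ is again harmonic, and, using the uniqueness (up to translation) of the harmonic realization attached to a fixed period homomorphism, every harmonic realization whose period lattice has the same covolume as $\rho(L)$ arises as some $A\circ\Phi$. Combined with the remark following the definition of a standard net, this shows that $\Phi$ is standard if and only if $\Phi$ is harmonic and
\[
\mathcal{E}(X, A\circ\Phi) \geq \mathcal{E}(X,\Phi) \quad\text{for all } A\in SL(N,\mathbb{R}).
\]

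Next I would rewrite the energy of a linear image in terms of the tension tensor. A direct edge-by-edge computation, using $\| A \bm{v} \|^{2} = \tr(A^{\top}A\, \bm{v}^{\otimes 2})$, gives
\[
\mathcal{E}(X, A\circ\Phi) = \tr\bigl(A^{\top}A\,\mathcal{T}(X,\Phi)\bigr),
\]
which specializes to Proposition~\ref{prop:trace} when $A=I$. Writing $G = A^{\top}A$, the displayed condition becomes the statement that $G=I$ minimizes $\tr(G\,\mathcal{T})$ over symmetric positive-definite $G$ with $\det G = 1$, where $\mathcal{T} = \mathcal{T}(X,\Phi)$. Here I would note that $\mathcal{T}$ is in fact positive definite: it is a non-negative combination of the rank-one matrices $\Phi(e)^{\otimes 2}$, and its kernel consists of vectors orthogonal to every edge vector, which is trivial because the edge vectors span $\mathbb{R}^{N}$ (they generate the full-rank lattice $\rho(L)$).

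The heart of the argument is then the constrained minimization. Since $\tr(G\mathcal{T}) = \tr(G^{1/2}\mathcal{T}G^{1/2})$ and $G^{1/2}\mathcal{T}G^{1/2}$ is positive definite with determinant $\det\mathcal{T}$, applying the arithmetic--geometric mean inequality to its eigenvalues yields
\[
\tr(G\,\mathcal{T}) \geq N\,(\det\mathcal{T})^{1/N},
\]
with equality if and only if $G^{1/2}\mathcal{T}G^{1/2}$ is a scalar matrix, i.e. $G = (\det\mathcal{T})^{1/N}\,\mathcal{T}^{-1}$. Thus the minimum over volume-preserving $G$ is attained at a unique point, and $G=I$ is that minimizer precisely when $\mathcal{T} = (\det\mathcal{T})^{1/N}\,I$, that is, when $\mathcal{T}(X,\Phi)$ is a scalar multiple of the identity. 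Feeding this equality condition back through the reduction of the first paragraph proves both implications simultaneously.

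The step I expect to be the most delicate is the reduction itself, not the inequality: one must verify that minimizing energy over all realizations of fixed covolume genuinely decouples into the harmonic condition (for each fixed period homomorphism) and the $SL(N,\mathbb{R})$-minimization (over period homomorphisms of equal covolume), which rests on the uniqueness of harmonic realizations up to translation together with Corollary~\ref{cor:harmonic}. Once that decoupling is secured, the AM--GM computation is routine, and its equality case delivers exactly the scalar-matrix characterization claimed.
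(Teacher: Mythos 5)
Your argument is correct, but note that the paper does not actually prove this statement: Theorem~\ref{thm:standard} is quoted from \cite[Theorem 7.5]{sunada12}, so there is no in-paper proof to compare against, and your derivation is a legitimate self-contained substitute. The reduction in your first paragraph --- standardness of a harmonic $\Phi$ is equivalent to $\mathcal{E}(X,A\circ\Phi)\ge\mathcal{E}(X,\Phi)$ for all $A\in SL(N,\mathbb{R})$ --- is exactly the content of the remark the authors place after the definition of a standard net, combined with Corollary~\ref{cor:harmonic} and the uniqueness (up to translation) of the harmonic realization for a fixed period homomorphism; all three ingredients are available in the paper. Your identity $\mathcal{E}(X,A\circ\Phi)=\tr\bigl(A^{\mathsf{T}}A\,\mathcal{T}(X,\Phi)\bigr)$ is consistent with the paper's transformation rule $\mathcal{T}(A(X,\Phi))=A\,\mathcal{T}(X,\Phi)\,A^{\mathsf{T}}$ together with Proposition~\ref{prop:trace}. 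The AM--GM step then locates the unique minimizing $G=A^{\mathsf{T}}A$ at $(\det\mathcal{T})^{1/N}\mathcal{T}^{-1}$, which yields both directions of the equivalence at once and, as a byproduct, recovers the paper's subsequent assertion that a standard realization is unique up to similarity (the minimizing $A$ is determined up to left multiplication by an orthogonal matrix). Two small points are worth making explicit if you write this up: positive definiteness of $\mathcal{T}$ requires that the positive-weight edge vectors span $\mathbb{R}^{N}$, which follows from connectedness of the reduced graph together with the full rank of $\rho(L)$ (sums of edge vectors along a path from $v$ to $\gamma v$ realize $\rho(\gamma)$); and the linear map carrying one period homomorphism to another of equal covolume may have determinant $-1$ rather than $+1$, which is harmless because $A^{\mathsf{T}}A$ still ranges over all symmetric positive-definite matrices of determinant one.
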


Consequently, a standard realization is unique up to similar transformations. 
The existence and explicit constructions of a standard realization 
were also shown previously \cite{kotanisunada01,sunada12}.

We remark that the global tension tensor $\mathcal{T}(X, \Phi)$ per period 
depends on the choice of period. 
Suppose that $L_{2}$ is a finite index sublattice of $L_{1}=L$, 
and $\mathcal{T}_{i}$ is the tension tensor with respect to the lattice $L_{i}$. 
Then 
\[
\mathcal{T}_{2}(X, \Phi) = [L_{1}:L_{2}] \mathcal{T}_{1}(X, \Phi). 
\]
To avoid this ambiguity, we can define the tension tensor per weight by
\[
\mathcal{T}_{w}(X, \Phi) = \frac{\mathcal{T}(X, \Phi)}{\frac{1}{2} \sum_{e \in E/L} w(e)}. 
\]
However, in most parts of this paper, 
we assume that the covolumes of period lattices are constant, 
and we use the tension tensor per period without the ambiguity.

\subsection{Linear action and visualization}

Let $A \in GL(N,\mathbb{R})$.
The matrix $A$ acts on a net $(X,\Phi)$ by 
\[
A(X,\Phi) = (X, A \circ \Phi). 
\]
Since $\bm{x} \otimes \bm{x} = \bm{x} \, \bm{x}^\mathsf{T}$ for $\bm{x} \in \mathbb{R}^{N}$, 
it follows that $\mathcal{T}(A(X,\Phi)) = A \mathcal{T}(X,\Phi) A^\mathsf{T}$, 
where $\bm{x}^\mathsf{T}$ and $A^\mathsf{T}$ are 
the transposes of $\bm{x}$ and $A$, respectively. 
In particular, if $A$ is a symmetric matrix, 
then $\mathcal{T}(A(X,\Phi)) = A \mathcal{T}(X,\Phi) A$. 

\bigskip

To visualize the tension tensor, we define an ellipsoid by 
\[
\Ell(X,\Phi) = 
\{ \bm{x} \in \mathbb{R}^N \mid 
\bm{x}^\mathsf{T} \mathcal{T}_{w}(X,\Phi)^{-1} \bm{x} = 1 \}. 
\]
We remark that we use the tension tensor per weight here to avoid ambiguity. 
It is easy to check that $\Ell(A(X,\Phi))=A\Ell(X,\Phi)$.

\section{Stress}
\label{section:stress}

In this section, we consider the stress experienced by a net 
by using the tension tensor. 
Fix a periodic graph $X$. 
Let $\Phi$ be a harmonic realization of $X$. 
We regard the energy $\mathcal{E} = \mathcal{E}(X,\Phi)$ as physical energy. 
This is interpreted as the Helmholtz free energy for entropic elasticity. 
With a three-dimensional object in mind, 
we give an obvious generalization to the $N$-dimensional version. 
(See \cite{gurtin81} for the classical theory on continuum mechanics.) 

As a result, the stress satisfies the neo-Hookean model, 
which is the simplest one among the hyperelastic materials. 
This also coincides with the consequence of the classical theory on rubber elasticity by Kuhn 
(see \cite[Ch. 4]{treloar75}). 
Note that his setting is not identical to ours. 
Although polymers are normally distributed in his theory, 
the net in our setting is not isotropic. 
Nonetheless, a standard net has isotropy at the macro-scale.

\subsection{Stress tensor}

By compositing rotational isometry, 
we may assume that 
the tension tensor per period is a diagonal matrix 
$\mathcal{T} = \mathcal{T}(X,\Phi) = \diag(\tau_{1}, \dots , \tau_{N})$. 
Then, the energy per period is $\mathcal{E} = \tr \mathcal{T} = \sum_{i}\tau_{i}$. 
Let $\mathcal{V} = \vol(\mathbb{R}^{N}/\rho(L))$ denote the volume per period.

We apply a physical argument to define stress for nets. 
Let us consider a macro-scale object 
of which the shape is an orthotope ($N$-cuboid) of edge length $L_{i}$ in each $i$-th direction 
for $1 \leq i \leq N$. 
Suppose that this object consists of a net at the micro-scale. 
We apply the affine deformation assumption~\cite[Ch. 4]{treloar75} 
(or the Cauchy-Born rule~\cite{ericksen08cauchy}). 
In other words, 
if the macro-scale object undergoes an affine deformation, 
the net at the micro-scale undergoes the same affine deformation. 
Then, the total energy is equal to 
$(\prod_{i}L_{i} / \mathcal{V}) \mathcal{E} = (\prod_{i}L_{i} / \mathcal{V}) \sum_{i}\tau_{i}$. 
Suppose that external force $F_{i}$ extends outward in each $i$-th direction, 
and the object remains in equilibrium. 
Then, the stress in the $i$-th direction is $\sigma_{i} = L_{i}F_{i} / \prod_{j}L_{j}$. 
Consider infinitesimal deformation of the object. 
For a short while, we allow the volume $\mathcal{V}$ to vary 
but let $\prod_{i}L_{i} / \mathcal{V}$ be constant. 
Let $\Delta L_{i}$ denote the displacement in the $i$-th direction. 
The strain in the $i$-th direction is $\epsilon_{i} = \Delta L_{i} / L_{i}$. 
Then, the work is $\sum_{i} F_{i}\Delta L_{i} = \prod_{j}L_{j} \sum_{i}\sigma_{i}\epsilon_{i}$, 
which is equal to the difference of energies 
$(\prod_{j}L_{j} / \mathcal{V}) \Delta \mathcal{E}$. 
Hence, $\sum_{i}\sigma_{i}\epsilon_{i} = \Delta \mathcal{E} / \mathcal{V}$. 
The difference of the tension tensor is given by 
\[
\Delta \mathcal{T} = 
\diag \left(\tau_{1} (1+\epsilon_{1})^{2}, \dots , \tau_{N} (1+\epsilon_{N})^{2} \right)
- \mathcal{T} = 
\diag(2\tau_{1}\epsilon_{1}, \dots , 2\tau_{N}\epsilon_{N})
\] 
modulo the order more than one. 
Hence, $\Delta \mathcal{E} = \tr (\Delta \mathcal{T}) = \sum_{i} 2\tau_{i}\epsilon_{i}$. 
Therefore, 
\[
\sum_{i} \sigma_{i} \epsilon_{i} = \sum_{i} \frac{2\tau_{i}}{\mathcal{V}} \epsilon_{i}. 
\]

If we can vary $\epsilon_{i}$ freely, 
we obtain $\sigma_{i} = 2\tau_{i} / \mathcal{V}$. 
Thus, we define the \emph{Cauchy stress tensor} for a net as 
$\Sigma = (2 /\mathcal{V}) \mathcal{T}$, 
which is valid in general coordinates.

Furthermore, we suppose that 
the deformation preserves the volume. 
In other words, $\prod_{i}L_{i}$ and $\mathcal{V}$ are constant. 
Since $\prod_{i} (L_{i} + \Delta L_{i}) = \prod_{i}L_{i}$, 
we have $\sum_{i}\epsilon_{i} = 0$. 
If we vary $\epsilon_{i}$ under this condition, 
the equation $\sum_{i}\sigma_{i}\epsilon_{i} = \sum_{i} (2\tau_{i} / \mathcal{V}) \epsilon_{i}$ 
implies that $\sigma_{i} = 2\tau_{i} / \mathcal{V} - c$ for some constant $c$. 
Indeed, uniform pressure does not change the shape under the constraint of volume. 
Thus, the traceless part of the Cauchy stress tensor 
$\Sigma - (\tr(\Sigma) / N) I = 
(2 / \mathcal{V}) \mathcal{T} - (2\mathcal{E} / N\mathcal{V}) I$ 
is regarded as the volume-preserving part. 
This is called the \emph{deviatoric stress tensor}.

If the external forces $F_{i}$ are equal to zero, 
then $2\tau_{i} / \mathcal{V} = c$. 
Hence, the deviatoric stress tensor is zero. 
Since $\mathcal{T} = (c \mathcal{V} / 2) I$, 
Theorem~\ref{thm:standard} implies that $\Phi$ is a standard realization.

A material is hyperelastic (or Green elastic) 
if the stress under deformation is determined by a strain energy density function. 
In our setting, 
consider the affine deformation of a standard net $(X,\Phi)$ by the diagonal matrix 
\[
A = \diag (\lambda_{1}, \dots, \lambda_{N}) \in SL(N, \mathbb{R}). 
\]
The strain energy density function is given by 
\[
\frac{1}{\mathcal{V}} \left( \mathcal{E}(A(X,\Phi)) - \mathcal{E}(X,\Phi) \right) = 
\frac{\mathcal{E}(X,\Phi)}{N\mathcal{V}} \left( \sum_{i}\lambda_{i}^{2} - N \right). 
\]
A material with such a strain energy density function is called incompressible neo-Hookean.

\subsection{Uniaxial extension}

Consider a harmonic net $(X,\Phi)$. 
For the sake of the argument in Section~\ref{section:plastic}, 
first let $\Phi$ be not necessarily standard. 
We write $(\tau_{ij})_{ 1\leq i,j \leq N} = \mathcal{T}(X, \Phi)$, 
which is not necessarily diagonal, in contrast to the previous subsection. 
For $\lambda >0$, the diagonal matrix 
\[
A(\lambda) = \diag \left( \lambda, \lambda^{-1/(N-1)}, \dots, \lambda^{-1/(N-1)} \right) 
\in SL(N, \mathbb{R}) 
\]
induces a uniaxial extension with strain $\epsilon = \lambda -1$. 
The volume $\mathcal{V}$ per period is constant under deformation. 
Consider the tension tensor 
$\mathcal{T}(\lambda) = \mathcal{T}(A(\lambda)(X, \Phi)) 
= A(\lambda) \mathcal{T}(X, \Phi) A(\lambda)$ 
after deformation. 
A stress tensor in the volume-preserving setting is given by 
$(\sigma_{ij})_{ 1\leq i,j \leq N} = (2 / \mathcal{V}) \mathcal{T}(\lambda) - cI$ for some $c$. 
By considering the nature of uniaxial extension, 
we suppose that $\sigma_{22} + \dots + \sigma_{NN} = 0$. 
However, it does not hold that $\sigma_{22} = \dots = \sigma_{NN} = 0$ in general. 
Then, 
\[
c = \frac{2}{(N-1)\mathcal{V}} (\tau_{22} + \dots + \tau_{NN})\lambda^{-2/(N-1)}.
\] 
The \emph{true stress} under this uniaxial extension is defined by 
\[
\sigma_{\mathrm{true}} = \sigma_{11} = \dfrac{2}{\mathcal{V}}\tau_{11}\lambda^{2} - c 
= \dfrac{2}{\mathcal{V}} \left( \tau_{11}\lambda^{2} - \dfrac{1}{N-1} (\tau_{22} + \dots + \tau_{NN})\lambda^{-2/(N-1)} \right). 
\]  
The \emph{engineering stress} (or nominal stress) is measured 
using the cross-sectional area before deformation, 
and it is defined by $\sigma_{\mathrm{eng}} = \sigma_{\mathrm{true}} / \lambda$.

\begin{prop}
\label{prop:stress}
Let $\mathcal{E}(\lambda) = \mathcal{E}(A(\lambda)(X, \Phi))$. 
Then 
\[
\sigma_{\mathrm{true}} = \frac{\lambda}{\mathcal{V}} \frac{d\mathcal{E}(\lambda)}{d\lambda}, \ 
\sigma_{\mathrm{eng}} = \frac{1}{\mathcal{V}} \frac{d\mathcal{E}(\lambda)}{d\lambda}. 
\]
\end{prop}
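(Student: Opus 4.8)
The plan is to compute $\mathcal{E}(\lambda)$ explicitly as a function of $\lambda$, differentiate it, and match the result against the formula for $\sigma_{\mathrm{true}}$ stated just before the proposition. The starting point is the transformation law $\mathcal{T}(A(X,\Phi)) = A\mathcal{T}(X,\Phi)A^\mathsf{T}$ from Section~\ref{section:tension}, which for the symmetric (indeed diagonal) matrix $A(\lambda)$ gives $\mathcal{T}(\lambda) = A(\lambda)\mathcal{T}A(\lambda)$. Combining this with Proposition~\ref{prop:trace}, namely $\mathcal{E} = \tr\mathcal{T}$, I would write $\mathcal{E}(\lambda) = \tr(A(\lambda)\mathcal{T}A(\lambda))$. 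The key structural observation is that, since $A(\lambda)=\diag(\lambda,\lambda^{-1/(N-1)},\dots,\lambda^{-1/(N-1)})$ is diagonal, the conjugation scales the $(i,j)$ entry of $\mathcal{T}$ by $A(\lambda)_{ii}A(\lambda)_{jj}$, so the trace sees only the diagonal entries $\tau_{ii}$; the off-diagonal $\tau_{ij}$ (which exist because $\Phi$ need not be standard here) contribute nothing to the energy.

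This yields the explicit expression
\[
\mathcal{E}(\lambda) = \lambda^{2}\tau_{11} + \lambda^{-2/(N-1)}\,(\tau_{22}+\dots+\tau_{NN}).
\]
Differentiating termwise gives
\[
\frac{d\mathcal{E}(\lambda)}{d\lambda}
= 2\lambda\,\tau_{11} - \frac{2}{N-1}\,\lambda^{-(N+1)/(N-1)}\,(\tau_{22}+\dots+\tau_{NN}),
\]
where the exponent arithmetic uses $-\tfrac{2}{N-1}-1 = -\tfrac{N+1}{N-1}$. Multiplying by $\lambda/\mathcal{V}$ and simplifying the exponent via $1-\tfrac{N+1}{N-1}=-\tfrac{2}{N-1}$ produces exactly
\[
\frac{\lambda}{\mathcal{V}}\frac{d\mathcal{E}(\lambda)}{d\lambda}
= \frac{2}{\mathcal{V}}\left(\tau_{11}\lambda^{2} - \frac{1}{N-1}(\tau_{22}+\dots+\tau_{NN})\lambda^{-2/(N-1)}\right),
\]
which is the displayed value of $\sigma_{\mathrm{true}}$. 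The second identity then follows immediately from the definition $\sigma_{\mathrm{eng}} = \sigma_{\mathrm{true}}/\lambda$.

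I do not expect a genuine obstacle here: the argument is essentially careful bookkeeping of the exponents of $\lambda$, together with the elementary fact that conjugating by a diagonal matrix and taking the trace isolates the diagonal entries of $\mathcal{T}$. The only point deserving emphasis is conceptual rather than computational, namely that the off-diagonal components of the tension tensor, which are generally nonzero since $\Phi$ is only assumed harmonic, play no role in $\mathcal{E}(\lambda)$ and hence none in the stress; this is precisely what allows the energy-derivative formula to hold verbatim in the non-standard case treated for the sake of Section~\ref{section:plastic}.
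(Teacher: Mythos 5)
Your proposal is correct and follows essentially the same route as the paper's proof: compute $\mathcal{E}(\lambda)=\tr(A(\lambda)\mathcal{T}A(\lambda))=\tau_{11}\lambda^{2}+(\tau_{22}+\dots+\tau_{NN})\lambda^{-2/(N-1)}$, differentiate, and compare with the displayed formula for $\sigma_{\mathrm{true}}$. Your added remark that the off-diagonal entries of $\mathcal{T}$ drop out of the trace is a point the paper leaves implicit, but it is the same argument.
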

\begin{proof}
Since $\mathcal{E}(\lambda) = \tr(\mathcal{T}(\lambda)) 
= \tau_{11}\lambda^{2} + (\tau_{22} + \dots + \tau_{NN})\lambda^{-2/(N-1)}$, 
we have 
\[
\frac{d\mathcal{E}(\lambda)}{d\lambda} = 
2 \left( \tau_{11}\lambda - \frac{1}{N-1} (\tau_{22} + \dots + \tau_{NN})\lambda^{-1-2/(N-1)} \right). 
\]
\end{proof}

The \emph{permanent strain} is the number $\epsilon_{0}$ satisfying 
$\sigma_{\mathrm{eng}}(1+\epsilon_{0}) = 0$. 
The following equality is clear from the definition of $\sigma_{\mathrm{eng}}$. 

\begin{prop}
\label{prop:permanent}
It holds that 
\[
\epsilon_{0} = \left( \frac{\tau_{22} + \dots + \tau_{NN}}{(N-1)\tau_{11}} \right)^{(N-1)/2N} -1. 
\]
\end{prop}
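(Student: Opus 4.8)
The plan is to read off $\epsilon_{0}$ directly from the explicit formula for $\sigma_{\mathrm{eng}}$, so the entire argument is a one-variable root-finding computation. First I would recall that $\sigma_{\mathrm{eng}}(\lambda) = \sigma_{\mathrm{true}}(\lambda)/\lambda$ with
\[
\sigma_{\mathrm{true}}(\lambda) = \frac{2}{\mathcal{V}}\left( \tau_{11}\lambda^{2} - \frac{1}{N-1}(\tau_{22}+\dots+\tau_{NN})\lambda^{-2/(N-1)} \right),
\]
and substitute $\lambda = 1+\epsilon_{0}$ into the defining equation $\sigma_{\mathrm{eng}}(1+\epsilon_{0})=0$. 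Since $\lambda>0$ and $2/\mathcal{V}>0$, both the outer factor $1/\lambda$ and the constant $2/\mathcal{V}$ are nonzero, so the equation collapses to the vanishing of the parenthesised expression.

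Next I would solve $\tau_{11}\lambda^{2} = \frac{1}{N-1}(\tau_{22}+\dots+\tau_{NN})\lambda^{-2/(N-1)}$ for $\lambda$. Moving all powers of $\lambda$ to one side gives
\[
\lambda^{\,2 + 2/(N-1)} = \frac{\tau_{22}+\dots+\tau_{NN}}{(N-1)\tau_{11}}.
\]
The only arithmetic worth noting is the exponent simplification $2 + \frac{2}{N-1} = \frac{2N}{N-1}$; taking the $(N-1)/(2N)$-th root then yields the claimed value of $\lambda$, and $\epsilon_{0} = \lambda - 1$ finishes the identity.

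There is no genuine obstacle here; this is exactly the ``clear from the definition'' computation advertised before the statement. The only points deserving a word are well-definedness: the tension tensor is a sum of rank-one positive semidefinite terms $w(e)\Phi(e)^{\otimes 2}$, so each diagonal entry $\tau_{ii}$ is non-negative, the right-hand side is a non-negative real number, and its $(N-1)/(2N)$-th root is unambiguous. Assuming $\tau_{11}>0$ and $\tau_{22}+\dots+\tau_{NN}>0$, i.e.\ the case of genuine anisotropy in which a permanent strain is meaningful, the positive solution $\lambda$ is unique, so $\epsilon_{0}$ is well-defined.
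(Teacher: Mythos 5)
Your computation is correct and is exactly the one the paper intends: the paper gives no explicit proof, stating only that the equality "is clear from the definition of $\sigma_{\mathrm{eng}}$," and your root-finding from the displayed formula for $\sigma_{\mathrm{true}}$, with the exponent simplification $2+\tfrac{2}{N-1}=\tfrac{2N}{N-1}$, is that computation. The remarks on positivity of the $\tau_{ii}$ and uniqueness of the positive root are a reasonable small addition but do not change the argument.
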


Consider the case in which $\Phi$ is standard. 
Then, $\tau_{11}  = \dots = \tau_{NN} = \mathcal{E}(X, \Phi) / N$. 
Moreover, we have $\sigma_{22} = \dots = \sigma_{NN} = 0$, 
which is natural for uniaxial extension. 
The true stress is given by 
\[
\sigma_{\mathrm{true}} = \sigma_{11} 
= \frac{2\mathcal{E}(X, \Phi)}{N\mathcal{V}} \left( \lambda^{2} - \lambda^{-2/(N-1)} \right). 
\]
The engineering stress is given by
\[
\sigma_{\mathrm{eng}} = \frac{\sigma_{\mathrm{true}}}{\lambda} 
= \frac{2\mathcal{E}(X, \Phi)}{N\mathcal{V}} \left( \lambda - \lambda^{-1-2/(N-1)} \right). 
\]
Then, \emph{Young's modulus} for a standard net $(X,\Phi)$ is defined by 
\[
E = \left. \frac{d\sigma_{\mathrm{true}}}{d\lambda} \right|_{\lambda=1} = \frac{4\mathcal{E}(X, \Phi)}{(N-1)\mathcal{V}}. 
\]

\section{Local moves}
\label{section:move}

We introduce three local moves for nets: contraction and splitting. 
A local move for a graph is an operation to obtain a new graph by replacing some vertices and edges. 
When we say that we replace an edge $e$ with $e^{\prime}$, 
we simultaneously replace $\iota (e)$ with $\iota (e^{\prime})$ 
in our notation. 
For a periodic graph, a local move is regarded as an equivariant operation 
preserving the period. 
Even though local moves are defined as operations for abstract graphs, 
the conditions under which they occur are given by realizations of nets.

\subsection{Contraction of two vertices}

Let $X = (V,E)$ be an (abstract) graph, and let $v_{0}, v_{1} \in V$. 
We construct a new graph $X^{\prime} = (V^{\prime}, E^{\prime})$ as follows: 
We define $V^{\prime} = (V \setminus \{v_{0},v_{1}\}) \sqcup v^{\prime}$ and $E^{\prime} = E$. 
Let $\pi \colon V \to V^{\prime}$ denote the projection such that 
$\pi (v_{0}) = \pi (v_{1}) =  v^{\prime}$ 
and the restriction $\pi |_{V \setminus \{v_{0},v_{1}\}}$ is the identity map. 
We define the endpoint maps 
$o^{\prime} = \pi \circ o, t^{\prime} = \pi \circ t \colon E^{\prime} \to V^{\prime}$. 
Suppose that the weight function on $E^{\prime}$ is identical to that on $E$. 
We call this operation the \emph{contraction} of $v_{0}$ and $v_{1}$ to $v^{\prime}$. 
This contraction causes the edges $e_{0}, e_{1}$, and $e_{01}$ to change into
the loop $e^{\prime}$ on $v^{\prime}$, 
where $e_{i}$ is the loop on $v_{i}$, and $e_{01}$ is the edge between $v_{0}$ and $v_{1}$ 
(Figure~\ref{fig: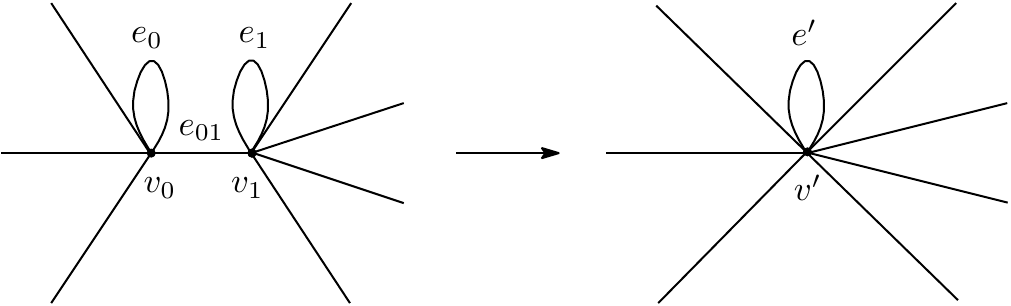}). 
Then, the sum of weights is preserved.

\fig[width=12cm]{contract.pdf}{Contraction of two vertices}

For a periodic graph $X$ with period $L$, 
we define a contraction as an equivariant operation. 
In other words, 
we apply the contraction of $\gamma v_{0}$ and $\gamma v_{1}$ for each $\gamma \in L$. 
In this case, it is necessary that 
$v_{1} \neq \gamma v_{0}$ for any $\gamma \in L$. 
As a result, we obtain a new periodic graph $X^{\prime}$.

To introduce deformation in Section~\ref{section:model}, 
we define a condition for contraction using a realization $\Phi$ of the graph $X$. 
Fix a constant $\delta > 0$. 
If $\| \Phi (v_{1}) - \Phi (v_{0}) \| \leq \delta$, 
then we suppose that the vertices $v_{0}$ and $v_{1}$ contract.

Note that the weight $w(e_{01})$ may be zero. 
Even in this case, 
the vertices $v_{0}$ and $v_{1}$ contract if their distance is sufficiently small.

\subsection{Splitting of a vertex}

A splitting of a vertex is an inverse operation of contraction. 
This is not determined only by the vertex. 
When a vertex $v$ splits, 
the loop on $v$ changes the loops on $v_{0}$ and $v_{1}$ and the edge between them. 
Although the sum of weights is preserved, the choice of the three weights is not unique. 
We also need to assign an endpoint $v_{0}$ or $v_{1}$ 
for each new edge corresponding to an edge originating from $v$. 
Note that there may not necessarily exist loops on the vertex $v$. 
If there exist no loops on $v$, 
there exist no edges between the new vertices $v_{0}$ and $v_{1}$.

For a periodic graph $X$, 
we define a splitting as an equivariant operation 
to obtain a new periodic graph $X^{\prime}$. 
We remark that vertices $v$ and $\gamma v$ for $\gamma \in L$ may be adjacent. 
Even in this case, we can still define a splitting as such. 
By ignoring the period, 
we apply successive splittings of $v$ and $\gamma v$. 
Note that the sequence of splittings in any order yields the same result.

We define a condition for splitting using a realization $\Phi$ of the graph $X$: 
\begin{enumerate}[(i)]
\item when a vertex $v$ splits; 
\item the way in which the edges originating from $v$ are divided into two classes; and 
\item the way in which the weights are assigned. 
\end{enumerate}
Fix constants $K_{d} > 0$ for $d>0$. 
The value $K_{d}$ is regarded as the firmness of a vertex with degree $d$. 
Recall that $\Phi \colon E \to \mathbb{R}^{d}$ is the map induced by $\Phi$, 
and $\mathcal{T}(v) = \sum_{o(e)=v}  w(e) \Phi (e)^{\otimes 2}$ is the local tension tensor around $v$. 
Let $\lambda_{\mathrm{max}}$ denote the maximal eigenvalue of $\mathcal{T}(v)$. 
Take an eigenvector $\bm{u}$ associated with $\lambda_{\mathrm{max}}$. 
We divide the edges originating from $v$ into two classes 
$\{e_{0,j}\}_{1 \leq j \leq m}$ and $\{e_{1,j}\}_{1\leq j \leq n}$ 
so that $\bm{u} \cdot \Phi (e_{0,j}) \leq 0$ and $\bm{u} \cdot \Phi (e_{1,j}) \geq 0$. 
If $\lambda_{\mathrm{max}} \geq K_{\deg (v)}$, then we suppose that the vertex $v$ splits. 
This may be regarded as the maximal principal stress criterion. 
The new edge corresponding to $e_{i,j}$ originates from $v_{i}$. 
Roughly speaking, the splitting occurs in the stretched direction.

To obtain the unique division into two classes $\{e_{0,j}\}$ and $\{e_{1,j}\}$, 
we need the following condition of genericity: 
\begin{enumerate}[(i)]
\item the eigenspace associated with $\lambda_{\mathrm{max}}$ is 
         the one-dimensional space $\Span(\bm{u})$, and 
\item there are no edges $e_{i,j}$ such that $\bm{u} \cdot \Phi (e_{i,j}) = 0$. 
\end{enumerate}

Because it is difficult to decide how the weights are assigned, we use an ad hoc setting: 
Suppose that 
$e_{0}^{\prime}$ and $e_{1}^{\prime}$ are respectively the loops on $v_{0}$ and $v_{1}$. 
Moreover, suppose that $e_{01}^{\prime}$ is the edge between $v_{0}$ and $v_{1}$. 
Fix the probabilities $p_{0}$, $p_{1}$, and $p_{01}$ that 
the loop $e$ on $v$ changes into the new edges 
$e_{0}^{\prime}$, $e_{1}^{\prime}$, and $e_{01}^{\prime}$. 
In other words, 
$w(e_{0}^{\prime}) = p_{0} w(e)$, $w(e_{1}^{\prime}) = p_{1} w(e)$, 
$w(e_{01}^{\prime}) = p_{01} w(e)$, and $p_{0} + p_{1} + p_{01} = 1$. 
Although the choice of $p_{0}$, $p_{1}$, and $p_{01}$ may be arbitrary, 
it is reasonable to set $p_{0} = p_{1} = 1/4$ and $p_{01} = 1/2$. 
The reason is that the above choice holds 
if each endpoint of a new edge is $v_{0}$ with a probability of $1/2$.

For a realization $\Phi$ of a graph $X$, 
suppose that a graph $X^{\prime}$ is obtained by 
splitting a vertex $v$ into $v_{0}$ and $v_{1}$. 
Then, we define the \emph{immediate realization} $\Phi^{(i)}$ of $X^{\prime}$ 
(or $\Phi$ by abuse of notation) 
as follows: 
$\Phi^{(i)} (v_{0}) = \Phi^{(i)} (v_{1}) = \Phi (v)$, 
and $\Phi^{(i)} (u) = \Phi (u)$ for any other vertex $u$. 
If $\Phi$ is a periodic realization, 
then $\Phi^{(i)}$ is equivariantly defined as a periodic realization. 
Using this, we can show that splitting decreases the energy.

\begin{prop}
Suppose that $X^{\prime}$ is a graph obtained from $X$ as a result of splitting 
under the above condition. 
Let $\Phi^{\prime}$ be a harmonic realization of $X^{\prime}$ with the same period as $\Phi$. 
Then, $\mathcal{E}(X^{\prime}, \Phi^{\prime}) < \mathcal{E}(X, \Phi)$. 
\end{prop}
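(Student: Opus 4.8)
The plan is to compare the harmonic realization $\Phi'$ with the immediate realization $\Phi^{(i)}$ of $X'$ introduced just above the statement, and to combine two facts: that $\Phi'$ minimizes energy among realizations of $X'$ with the given period homomorphism, and that $\Phi^{(i)}$ fails to be harmonic. The strict inequality will come precisely from this failure.

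First I would check that passing to the immediate realization does not change the energy, i.e. $\mathcal{E}(X', \Phi^{(i)}) = \mathcal{E}(X, \Phi)$. Under $\Phi^{(i)}$ the new vertices $v_{0}$ and $v_{1}$ (and equivariantly all their translates) occupy the common point $\Phi(v)$, so every non-loop edge $e_{i,j}$ issuing from $v$ keeps its vector, $\Phi^{(i)}(e_{i,j}) = \Phi(e_{i,j})$, while the three edges $e_{0}'$, $e_{1}'$, $e_{01}'$ produced from a loop on $v$ are again loops or join points at the same location, so they contribute exactly what the original loop contributed. Summing over $E/L$ in Definition~\ref{dfn:energy} gives the equality term by term. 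Since $\Phi^{(i)}$ is a realization of $X'$ with the same period as $\Phi$ and $\Phi'$ is harmonic, Definition~\ref{dfn:harmonic} yields $\mathcal{E}(X', \Phi') \leq \mathcal{E}(X', \Phi^{(i)}) = \mathcal{E}(X, \Phi)$.

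To upgrade this to a strict inequality I would show that $\Phi^{(i)}$ is \emph{not} harmonic, so it cannot attain the minimal energy. Applying the criterion of Theorem~\ref{thm:harmonic}, I evaluate the vertex sums $\bm{F}_{i} = \sum_{o(e)=v_{i}} w(e)\,\Phi^{(i)}(e)$ at $v_{0}$ and $v_{1}$: the loops on $v_{i}$ and the edge $e_{01}'$ have vanishing vectors under $\Phi^{(i)}$, so $\bm{F}_{i} = \sum_{j} w(e_{i,j})\,\Phi(e_{i,j})$. Pairing with the eigenvector $\bm{u}$ of the maximal eigenvalue $\lambda_{\mathrm{max}}$ of $\mathcal{T}(v)$ and using the division rule $\bm{u}\cdot\Phi(e_{0,j}) \leq 0 \leq \bm{u}\cdot\Phi(e_{1,j})$ gives $\bm{u}\cdot\bm{F}_{0} \leq 0 \leq \bm{u}\cdot\bm{F}_{1}$. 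Normalizing $\|\bm{u}\| = 1$ and writing $\lambda_{\mathrm{max}} = \sum_{o(e)=v} w(e)\,(\bm{u}\cdot\Phi(e))^{2} \geq K_{\deg(v)} > 0$ forces at least one non-loop edge $e^{*}$ from $v$ with $\bm{u}\cdot\Phi(e^{*}) \neq 0$; by the division rule $e^{*}$ lies in class $0$ (making $\bm{u}\cdot\bm{F}_{0} < 0$) or in class $1$ (making $\bm{u}\cdot\bm{F}_{1} > 0$). Either way some $\bm{F}_{i} \neq 0$, so $\Phi^{(i)}$ violates Theorem~\ref{thm:harmonic} and is not harmonic. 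Hence $\mathcal{E}(X', \Phi') < \mathcal{E}(X', \Phi^{(i)}) = \mathcal{E}(X, \Phi)$. (Equivalently, one may phrase the last step as a first variation: translating the $v_{i}$-orbit by $t\bm{u}$ changes the energy to first order by $-2\,\bm{u}\cdot\bm{F}_{i}$, so $\Phi^{(i)}$ is not even a critical point.)

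The main obstacle I anticipate is the bookkeeping of loop contributions in the periodic setting. An edge that projects to a loop in $X/L$ but joins $v$ to a translate $\gamma v$ carries the nonzero period vector $\rho(\gamma)$, so both the energy-preservation step and the cleanliness of $\bm{F}_{i}$ rely on the genuine-loop case, where $e_{0}'$, $e_{1}'$, $e_{01}'$ all have vanishing vectors under $\Phi^{(i)}$. The translation viewpoint handles the loops $e_{i}'$ internal to a single orbit automatically, since both of their endpoints move together and their vectors are unchanged; the delicate term is the cross edge $e_{01}'$ between the two orbits, whose period vector need not vanish. I would handle this by verifying that the split edges inherit the period of the original loop consistently, so that the class-$0$ or class-$1$ edge guaranteed by $\lambda_{\mathrm{max}} > 0$ still produces a strictly nonzero variation and no exact cancellation occurs; the genuine-loop case is the transparent prototype and the periodic case follows by the same equivariant translation argument.
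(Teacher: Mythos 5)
Your proposal is correct and follows essentially the same route as the paper: compare $\Phi'$ with the immediate realization $\Phi^{(i)}$, observe that $\mathcal{E}(X',\Phi^{(i)})=\mathcal{E}(X,\Phi)$, and deduce strict inequality from the fact that the splitting condition together with Theorem~\ref{thm:harmonic} forces $\Phi^{(i)}$ to be non-harmonic. You merely supply the details (the eigenvector pairing showing some vertex sum $\bm{F}_{i}\neq 0$, and the first-variation remark) that the paper leaves implicit.
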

\begin{proof}
Clearly, $\mathcal{E}(X^{\prime}, \Phi^{(i)}) = \mathcal{E}(X, \Phi)$. 
The condition of splitting and Theorem~\ref{thm:harmonic} imply that 
$\Phi^{(i)}$ is not harmonic. 
Hence, $\mathcal{E}(X^{\prime}, \Phi^{\prime}) < \mathcal{E}(X^{\prime}, \Phi^{(i)})$ 
by Definition~\ref{dfn:harmonic}. 
\end{proof}

\begin{rem}
In the above two subsections, 
the graph $X^{\prime}$ obtained by the local move is well-defined. 
However, the realization $\Phi^{\prime}$ of $X^{\prime}$ should be defined using harmonicity, 
so there remains ambiguity of parallel translation. 

In this paper, it does not matter
because we only consider the shape of the realized graph and its energy. 
However, if one wants to discuss such as the displacements of nodes 
before and after a local move, this ambiguity should be removed. 
One idea is to assume that 
the centre of mass of the periodic cell is fixed. 
\end{rem}

\section{Models of deformation}
\label{section:model}

We introduce two models: 
fast and slow deformation. 
The difference of these two models reflects the strain rate sensitivity, 
that is, the dependency of stress on the speed of deformation. 
A harmonic realization of a periodic graph is regarded as an equilibrium state. 
Let $(X_{0}, \Phi_{0})$ be a standard net with period homomorphism $\rho_{0}$ 
as an initial condition. 
This is regarded as a state without external force 
as explained in Section~\ref{section:stress}. 
In this section, we express the period homomorphisms explicitly. 
Suppose that the initial net $(X_{0}, \Phi_{0}, \rho_{0})$ does not satisfy 
any condition of a contraction or splitting. 
Deformation is obtained by linear transformations with constant volume. 
We apply contractions and splittings that satisfy the conditions in Section~\ref{section:move} 
for harmonic nets in deformation. 
Because the process is not deterministic in general, 
it is necessary to choose one that satisfies the conditions. 
We leave the stochastic formulation for future work.

\subsection{Fast deformation} 
Let $A \in SL(N, \mathbb{R})$. 
Fix the period homomorphism $A \circ \rho_{0}$. 
We apply local moves for the harmonic net 
$A(X_{0}, \Phi_{0}, \rho_{0}) = (X_{0}, A \circ \Phi_{0}, A \circ \rho_{0})$. 
First, we apply the splittings. 
Then, we obtain a new harmonic net. 
If the conditions of other local moves hold, 
we continue to apply the splittings. 
Second, we apply the contractions. 
However, more than two vertices may contract to a point. 
In general, we need to choose the contracting vertices 
so that the contraction does not violate the period.

We continue this procedure by 
supposing that these procedures finish with finitely many local moves. 
In the end, 
we obtain a harmonic net $(X_{1}, \Phi_{1}, A \circ \rho_{0})$. 
We call this process \emph{fast deformation}.


\subsection{Slow deformation}

Slow deformation is a limit of sequences of fast deformation. 
For a continuous family of linear transformations, 
take approximations by discrete families of small ones. 
They induce sequences of fast deformation. 
We obtain slow deformation by the limit as the approximations get arbitrarily fine.

Equivalently and more precisely, slow deformation is defined as follows. 
Suppose that 
$A_{t} \in SL(N, \mathbb{R})$ for $0 \leq t \leq 1$ is 
a continuous family of linear transformations such that $A_{0} = \mathrm{id}$. 
Let $\rho_{t} = A_{t} \circ \rho_{0}$. 
We apply local moves while increasing $t$ from zero to one. 
Let $t_{1}$ be the minimal $t$ such that 
the condition of a local move holds for the harmonic net $(X_{0}, A_{t} \circ \Phi_{0}, \rho_{t})$. 
We obtain a graph $X_{0}^{\prime}$ by the local move. 
Consider a harmonic net $(X_{0}^{\prime}, \Phi_{0}^{\prime}, \rho_{t_{1}})$. 
Note that another local move may occur for $(X_{0}^{\prime}, \Phi_{0}^{\prime}, \rho_{t_{1}})$. 
Then, we continue to apply local moves. 
Subsequently, we obtain a harmonic net $(X_{t_{1}}, \Phi_{t_{1}}, \rho_{t_{1}})$. 

After the exhaustion of local moves for $t_{1}$, 
we increase $t$. 
Let $t_{2}$ be the minimal $t$ more than $t_{1}$ such that 
the condition of a local move holds for the harmonic net 
$(X_{t_{1}}, A_{t}A_{t_{1}}^{-1} \circ \Phi_{t_{1}}, \rho_{t_{1}})$. 
Using the same argument as above, 
we obtain a harmonic net $(X_{t_{2}}, \Phi_{t_{2}}, \rho_{t_{2}})$. 

We continue this procedure by 
supposing that these procedures finish with finitely many local moves. 
Ultimately, 
we obtain a harmonic net $(X_{1}, \Phi_{1}, \rho_{1})$. 
We call this process \emph{slow deformation}.

A condition of genericity is given as follows: 
\begin{enumerate}[(i)]
\item two local moves do not occur simultaneously, and 
\item two vertices equivalent by the period do not contract. 
\end{enumerate}
If the net is highly symmetric, 
the genericity is difficult to hold. 
For genericity, we arbitrarily choose a single local move at a time, 
and we ignore any contraction that violates the period.


\bigskip

We define the \emph{stress--strain curve} for a uniaxial extension. 
Let 
\[
A(\lambda) = \diag \left( \lambda, \lambda^{-1/(N-1)}, \dots , \lambda^{-1/(N-1)} \right) 
\in SL(N, \mathbb{R}). 
\]
The slow deformation by $A_{t} = A(\lambda^{t})$ for $0 \leq t \leq 1$ 
induces a net $(X_{1}, \Phi_{1})$. 
The energy $\mathcal{E}(\lambda) = \mathcal{E}(X_{1}, \Phi_{1})$ 
is a right-continuous function of $\lambda$. 
We can plot the stress--strain curve 
as a graph of the engineering stress $\sigma_{\mathrm{eng}}$ 
as a function of the strain $\epsilon = \lambda-1$, 
where $\sigma_{\mathrm{eng}} = \dfrac{1}{\mathcal{V}} \dfrac{d\mathcal{E}(\lambda)}{d\lambda}$ 
by Proposition~\ref{prop:stress}.

\subsection{Compatibility of splitting and contraction}

We suppose that 
the above procedures in deformation finish with finitely many local moves. 
In general, splittings and contractions may cause an infinite sequence of local moves. 
We give only a partial result for this problem. 


Let $(X, \Phi)$ be a harmonic net. 
Let $X^{\prime}$ be a periodic graph obtained from $X$ 
by splitting a vertex $v$ into $v_{0}$ and $v_{1}$ 
with respect to the condition given in Section~\ref{section:move}. 
The maximal eigenvalue of the local tension tensor $\mathcal{T}(v)$ is equal to $K_{\deg (v)}$. 
Let $e^{\prime}$ denote the new non-loop edge in $X^{\prime}$. 
Let $w^{\prime}$ be the weight of $e^{\prime}$, 
which does not exceed the weight of the loop on $v$. 
We consider a harmonic realization $\Phi^{\prime}$ 
with the same period lattice as $\Phi$. 
If $\|\Phi^{\prime}(e^{\prime})\| = \|\Phi^{\prime}(v_{1}) - \Phi^{\prime}(v_{0})\| \leq \delta$, 
then splittings and contractions continue alternately. 
We show that such repetition does not occur 
if $\delta$ is sufficiently small.

\begin{thm}
\label{thm:compatibility}
Suppose that the weights are non-negative integers. 
Then, 
\[
\|\Phi^{\prime}(e^{\prime})\|
\geq \frac{\sqrt{2 K_{\deg (v)}}}{\deg (v)}. 
\]
\end{thm}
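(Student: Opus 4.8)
The plan is to reduce the whole estimate to the single direction $\bm{u}$, where $\bm{u}$ is a unit eigenvector of the local tension tensor $\mathcal{T}(v)$ for its maximal eigenvalue $\lambda_{\max}=K_{\deg(v)}$, and to read off the length $\|\Phi'(e')\|\geq|\bm{u}\cdot\Phi'(e')|$ from discrete potential theory. Write $K=K_{\deg(v)}$, so that $K=\lambda_{\max}=\bm{u}^{\mathsf T}\mathcal{T}(v)\bm{u}=\sum_{o(e)=v}w(e)(\bm{u}\cdot\Phi(e))^{2}$. First I would pass to the immediate realization $\Phi^{(i)}$ of $X'$, which places $v_0$ and $v_1$ at the common point $\Phi(v)$ and fixes every other vertex. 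Setting $\bm{F}_0=\sum_j w(e_{0,j})\Phi(e_{0,j})$ and $\bm{F}_1=\sum_j w(e_{1,j})\Phi(e_{1,j})$, harmonicity of $\Phi$ at $v$ (Theorem~\ref{thm:harmonic}) gives $\bm{F}_0+\bm{F}_1=\bm{0}$, and a direct computation shows that $\Phi^{(i)}$ satisfies the harmonic equation at every vertex except $v_0$ and $v_1$, where the defects are exactly $\bm{F}_0$ and $-\bm{F}_0=\bm{F}_1$. Because $\bm{u}\cdot\Phi(e_{0,j})\leq 0$ for all $j$ by the splitting rule, the terms of $\bm{u}\cdot\bm{F}_0$ share a sign, so $|\bm{u}\cdot\bm{F}_0|=\sum_j w(e_{0,j})\,|\bm{u}\cdot\Phi(e_{0,j})|$.

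Next I would compare $\Phi^{(i)}$ with the genuine harmonic realization $\Phi'$. Their difference $\Psi=\Phi'-\Phi^{(i)}$ is $L$-periodic, so its $\bm{u}$-component $h=\bm{u}\cdot\Psi$ descends to a function on the \emph{finite} quotient network $X'/L$, and the defect computation above says precisely that $h$ solves the discrete Poisson equation for the weighted graph Laplacian with a current of magnitude $I=|\bm{u}\cdot\bm{F}_0|$ injected at $v_0$ and extracted at $v_1$. Interpreting edge weights as conductances, $h$ is the electric potential of this dipole on $X'/L$, whence $|\bm{u}\cdot\Phi'(e')|=|h(v_1)-h(v_0)|=|\bm{u}\cdot\bm{F}_0|\cdot R_{\mathrm{eff}}(v_0,v_1)$, where $R_{\mathrm{eff}}$ denotes the effective resistance between $v_0$ and $v_1$ in $X'/L$.

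It then remains to bound the two factors. For the current, splitting each integer weight into unit edges and using $(\sum_k|a_k|)^2\geq\sum_k a_k^2$ gives $|\bm{u}\cdot\bm{F}_0|\geq\sqrt{\sum_j w(e_{0,j})(\bm{u}\cdot\Phi(e_{0,j}))^2}$, and symmetrically $|\bm{u}\cdot\bm{F}_1|\geq\sqrt{\sum_j w(e_{1,j})(\bm{u}\cdot\Phi(e_{1,j}))^2}$; since these two partial sums add up to $K$ and $|\bm{u}\cdot\bm{F}_0|=|\bm{u}\cdot\bm{F}_1|$, taking the larger side yields $|\bm{u}\cdot\bm{F}_0|\geq\sqrt{K/2}$. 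This is the one place the integrality hypothesis is essential: for a single edge of weight $w<1$ the inequality $w|a|\geq\sqrt{w}\,|a|$ would fail. For the resistance, the cut isolating $v_i$ shows that the effective conductance is at most the total weight $C_i$ of the non-loop edges issuing from $v_i$; since the new loops $e_0',e_1'$ account for the deficit, $C_0+C_1\leq\deg(v)$, hence $\min(C_0,C_1)\leq\deg(v)/2$ and $R_{\mathrm{eff}}(v_0,v_1)\geq 1/\min(C_0,C_1)\geq 2/\deg(v)$. Multiplying, $\|\Phi'(e')\|\geq|\bm{u}\cdot\Phi'(e')|\geq\sqrt{K/2}\cdot(2/\deg(v))=\sqrt{2K}/\deg(v)$, which is exactly the claim; it is sharp, as balanced unit edges with all loop weight sent to $e'$ (so $C_0=C_1=\deg(v)/2$) show.

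The main obstacle I anticipate is not these two elementary estimates but making the electrical interpretation rigorous on a periodic net. One must check that the harmonic correction $\Psi$ exists, is $L$-periodic and well-defined up to an additive constant, and that passing to the finite quotient $X'/L$ legitimately turns the periodic array of dipole sources into a single dipole on a finite conductance network, so that $R_{\mathrm{eff}}(v_0,v_1)$ is finite and the Poisson solution has the asserted potential drop. One should also confirm that genericity keeps both edge classes nonempty (so that $C_0,C_1>0$ and $X'/L$ stays connected). Once this dictionary between harmonic realizations and discrete electric potentials is set up carefully, the remainder is the sign bookkeeping of the defects and the two inequalities above.
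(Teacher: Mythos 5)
Your proof is correct, but it takes a genuinely different route from the paper's. The paper proves the bound by introducing the auxiliary realization $\Phi^{(a)}$ in which only $v_{1}$ is allowed to relax, computing $\Phi^{(a)}(v_{1})$ explicitly from harmonicity at that single vertex, bounding its $\bm{u}$-component below via Lemma~\ref{lem:min} plus integrality, and then invoking Theorem~\ref{thm:estimate} (which rests on the weight-variation machinery of Section~\ref{section:weight}: Theorem~\ref{thm:loss}, Lemma~\ref{lem:weight}, and the determinant lemmas) to show that the fully harmonic $\Phi^{\prime}$ stretches $e^{\prime}$ at least as much as $\Phi^{(a)}$ does. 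You instead work directly with $\Phi^{\prime}$ through the exact identity $|\bm{u}\cdot\Phi^{\prime}(e^{\prime})| = |\bm{u}\cdot\bm{F}_{0}|\,R_{\mathrm{eff}}(v_{0},v_{1})$ on the finite quotient network, and then bound the two factors separately: the current bound $|\bm{u}\cdot\bm{F}_{0}|\geq\sqrt{K/2}$ is the same sign-plus-integrality argument as the paper's use of Lemma~\ref{lem:min} (you correctly identify this as the only place integrality enters), while the single-vertex cut bound $R_{\mathrm{eff}}\geq 2/\deg(v)$ replaces the combination of the explicit denominator $w^{\prime}+\sum_{j}w_{1j}\leq\deg(v_{1})\leq\deg(v)/2$ and Theorem~\ref{thm:estimate}. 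Your route is more self-contained (it bypasses Section~\ref{section:weight} entirely) and more conceptual: it exposes Theorem~\ref{thm:estimate} as essentially a lower bound on effective resistance, and it localizes exactly where each hypothesis is used. What it costs is the setup you yourself flag: one must verify that $\Psi=\Phi^{\prime}-\Phi^{(i)}$ is $L$-periodic so that the Poisson problem descends to the finite quotient, that edges of the form $v_{i}\to\gamma v_{i}$ become quotient loops contributing neither to the Laplacian of $\Psi$ nor to the cuts, and that the quotient stays connected so $R_{\mathrm{eff}}$ is finite and the dipole potential is well defined up to a constant; all of these are routine for finite weighted networks, and the defect computation at $v_{0}$, $v_{1}$ (using harmonicity of $\Phi$ at $v$ to get $\bm{F}_{0}+\bm{F}_{1}=0$) is exactly right. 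I would only ask you to state explicitly that $\bm{u}$ is a unit vector when writing $K=\bm{u}^{\mathsf T}\mathcal{T}(v)\bm{u}$ and $\|\Phi^{\prime}(e^{\prime})\|\geq|\bm{u}\cdot\Phi^{\prime}(e^{\prime})|$, and to record the bookkeeping $C_{0}+C_{1}\leq\deg(v_{0})+\deg(v_{1})=\deg(v)$ (the loops $e_{0}^{\prime},e_{1}^{\prime}$ and any quotient loops only help); with that, the argument is complete.
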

\begin{proof}
Let $\Phi^{(a)}$ be an auxiliary periodic realization of $X^{\prime}$ 
such that $\Phi^{(a)}(v_{0}) = \Phi (v)$, 
$\Phi^{(a)}(u) = \Phi (u)$ for any vertex $u \neq \gamma v_{1}$ ($\gamma \in L$), 
and $\Phi^{(a)}$ is harmonic around $v_{1}$. 
Then, $\|\Phi^{\prime}(e^{\prime})\| \geq \|\Phi^{(a)}(e^{\prime})\|$, 
which we show in Theorem~\ref{thm:estimate}. 
Hence, it is sufficient to show that 
\[
\|\Phi^{(a)}(e^{\prime})\|
\geq \frac{1}{\deg (v_{1})} \sqrt{\frac{K_{\deg (v)}}{2}}. 
\]
Indeed, 
since $\deg (v) = \deg (v_{0}) + \deg (v_{1})$, 
we may assume that $\deg (v) \geq 2\deg (v_{1})$ 
by interchanging $v_{0}$ and $v_{1}$ if necessary.

Let $e_{i1}, \dots , e_{in_{i}}$ for $i=0,1$ denote the non-loop edges of $X^{\prime}$ 
originating from $v_{i}$ other than $e^{\prime}$, 
and let $v_{i1}, \dots, v_{in_{i}}$ denote their terminals. 
Note that we may ignore the edges between $v_{i}$ and $\gamma v_{i}$ for $\gamma \in L$. 
Let $w_{ij}$ be the weight of $e_{ij}$. 
The same symbol is used for the corresponding edges and vertices of $X$. 
We write $\Phi (v_{ij}) = (x^{1}_{ij}, \dots , x^{N}_{ij})$. 
We may assume that 
$\Phi (v) = \Phi^{(a)} (v_{0}) = 0$ 
and the splitting occurs in the direction of the first coordinate; 
that is, the vector $(1,0, \dots, 0)$ is an eigenvector associated with the maximal eigenvalue 
of the local tension tensor around $v$. 
Then, $x^{1}_{0j} \leq 0$, $x^{1}_{1j} \geq 0$, 
$\sum_{i,j} w_{ij} x^{1}_{ij} = 0$, 
and $\sum_{i,j} w_{ij}(x^{1}_{ij})^{2} = K_{\deg (v)}$. 
Since $\Phi^{(a)}$ is harmonic around $v_{1}$, 
we have 
\[
-w^{\prime} \Phi^{(a)}(v_{1}) + \sum_{j=1}^{n_{1}} w_{1j} (\Phi (v_{1j}) - \Phi^{(a)}(v_{1})) = 0. 
\]
Hence, 
\[
\|\Phi^{(a)} (e^{\prime})\| 
= \|\Phi^{(a)} (v_{1})\| 
= \left\| \frac{\sum_{j=1}^{n_{1}} w_{1j} \Phi (v_{1j})}{w^{\prime} + \sum_{j=1}^{n_{1}} w_{1j}} \right\|
\geq \frac{\sum_{j=1}^{n_{1}} w_{1j} x^{1}_{1j}}{\deg (v_{1})}. 
\]
Moreover, 
we obtain $\sum_{j=1}^{n_{1}} w_{1j} x^{1}_{1j} \geq \sqrt{K_{\deg (v)} / 2}$ 
by Lemma~\ref{lem:min} and the assumption that $w_{ij} \in \mathbb{Z}_{\geq 0}$. 
\end{proof}

\begin{lem}
\label{lem:min}
Let $x_{01}, \dots , x_{0n_{0}} \leq 0$ and 
$x_{11}, \dots , x_{1n_{1}} \geq 0$. 
Suppose that $z = -\sum_{j=1}^{n_{0}} x_{0j} = \sum_{j=1}^{n_{1}} x_{1j}$ 
and $K = \sum_{i,j} (x_{ij})^{2}$. 
Then, $z \geq \sqrt{K / 2}$. 
\end{lem}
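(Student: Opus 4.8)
The plan is to reduce the inequality $z \geq \sqrt{K/2}$ to the elementary fact that, for non-negative reals, the sum of squares never exceeds the square of the sum. First I would set $a_j = -x_{0j} \geq 0$ for $1 \leq j \leq n_0$ and $b_j = x_{1j} \geq 0$ for $1 \leq j \leq n_1$, so that the hypotheses become $\sum_j a_j = \sum_j b_j = z$ and $K = \sum_j a_j^2 + \sum_j b_j^2$. In particular $z \geq 0$, so squaring both sides shows that it suffices to prove $2z^2 \geq K$.

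The key step is the observation that for non-negative numbers $a_1, \dots, a_{n_0}$ one has
\[
\sum_{j} a_j^2 \leq \left( \sum_j a_j \right)^2 = z^2,
\]
since expanding the right-hand side produces $\sum_j a_j^2$ together with a sum of cross terms $a_i a_j$ with $i \neq j$, all of which are non-negative. Applying the same inequality to the $b_j$ yields $\sum_j b_j^2 \leq z^2$ as well.

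Adding the two estimates gives $K = \sum_j a_j^2 + \sum_j b_j^2 \leq 2z^2$, and taking square roots of both (non-negative) sides produces the desired bound $z \geq \sqrt{K/2}$.

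I do not expect any genuine obstacle here: the entire content is the non-negativity of the cross terms, which is precisely where the sign conditions $x_{0j} \leq 0$ and $x_{1j} \geq 0$ enter. The one point meriting a moment's care is that the two groups must be handled separately before recombining, because the cross terms are controlled only \emph{within} each family of like-signed numbers, and not across the two families.
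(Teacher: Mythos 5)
Your proof is correct and follows essentially the same route as the paper: both arguments reduce to the fact that for non-negative numbers the sum of squares is at most the square of the sum, applied separately within each sign group. The paper phrases this as an extremal argument (the maximum of $K$ for fixed $z$ is attained by concentrating each group on a single term), whereas you justify it by direct expansion of the cross terms; the content is identical.
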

\begin{proof}
We find the maximum $K$ for a fixed $z>0$. 
If $x + y = a$ is fixed for $x,y \geq 0$, then 
the maximum $a^{2}$ of $x^{2}+y^{2}$ is attained when $x=0$ or $y=0$. 
Hence, the maximum of $K$ is attained 
when $x_{0j_{0}} = -z$ and $x_{1j_{1}} = z$ for some $j_{0}$ and $j_{1}$, 
and $x_{ij} = 0$ for the other $j$. 
Therefore, $K \leq 2z^{2}$. 
\end{proof}

\section{Variation of weights}
\label{section:weight}

In this section, we consider the extent to which the harmonic realizations and their energies depend on the weights, 
which varies in non-negative real numbers. 
We describe a contraction as the limit by increasing the weight of an edge. 
Note that we do not suppose that the sum of weights is preserved, which differs from the assumption in Section~\ref{section:move}. 

Let $X=(V,E)$ be a periodic graph with period $L$. 
We take representatives $v_{0}, v_{1}, \dots, v_{n} \in V$ of the set $V/L$. 
Let $e_{ij\gamma}$ denote the edge from $v_{i}$ to $\gamma v_{j}$ for $\gamma \in L$. 
Then, $\{e_{ij\gamma}\}$ are representatives of the set $E/L$. 
Let $w_{ij\gamma}$ denote the weight of $e_{ij\gamma}$. 
Then, $w_{ij,-\gamma} = w_{ji\gamma}$. 

Fix a period homomorphism $\rho \colon L \to \mathbb{R}^{N}$. 
Suppose that $n \geq 1$. 
Let $\hat{X}$ be a periodic graph obtained from $X$ 
by the contraction of $v_{0}$ and $v_{1}$ to a vertex $\hat{v}$. 
Suppose that $\Phi^{(h)}$ and $\hat{\Phi}^{(h)}$ are 
harmonic realizations of $X$ and $\hat{X}$, respectively. 
We may assume that $\Phi^{(h)}(v_{0}) = \hat{\Phi}^{(h)}(\hat{v}) = 0$. 
We change the harmonic realizations $\Phi^{(h)}$ 
by varying $w_{010}$ while fixing the other weights on $X$. 
Here, we regard $w_{ij\gamma} = w_{ji,-\gamma}$ as a single variable. 
Suppose that $X$ is connected, 
that is, the union of its edges with positive weights is connected. 
Since the harmonic realization $\Phi^{(h)}$ is given by 
the unique solution of a system of linear equations, 
it depends continuously on $w_{010}$. 
After we show that $\hat{\Phi}^{(h)}$ can be regarded as 
$\lim\limits_{w_{010} \to \infty} \Phi^{(h)}$, 
we give explicit presentations.

\begin{lem}
\label{lem:limit}
The realizations $\Phi^{(h)}$ converge to $\hat{\Phi}^{(h)}$ as $w_{010} \to \infty$. 
In other words, $\Phi^{(h)}(v_{i})$ converge to $\hat{\Phi}^{(h)}(v_{i})$ 
for each $2 \leq i \leq n$, 
and $\Phi^{(h)}(v_{1})$ converge to $\hat{\Phi}^{(h)}(\hat{v}) = 0$. 
In particular, $\lim\limits_{w_{010} \to \infty} \Phi^{(h)}(e_{010}) = 0$. 
Consequently, 
\[
\lim\limits_{w_{010} \to \infty} \mathcal{T}(X,\Phi^{(h)}) 
= \mathcal{T}(\hat{X},\hat{\Phi}^{(h)}) 
\ \text{and} \
\lim\limits_{w_{010} \to \infty} \mathcal{E}(X,\Phi^{(h)}) 
= \mathcal{E}(\hat{X},\hat{\Phi}^{(h)}). 
\]
\end{lem}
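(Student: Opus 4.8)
The plan is to treat the harmonic realizations variationally, using that by Definition~\ref{dfn:harmonic} the harmonic realization $\Phi^{(h)}$ is exactly the minimizer of the energy among periodic realizations with the fixed period homomorphism $\rho$. Writing $p_i = \Phi^{(h)}(v_i)$ and imposing the normalization $p_0 = 0$, every such realization is determined by $p = (p_1,\dots,p_n) \in (\mathbb{R}^N)^n$, and its energy is a convex quadratic $\mathcal{E}_w(p)$ in which I make the dependence on the varying weight $w = w_{010}$ explicit. Since the edge $e_{010}$ together with its reverse contributes $w\,\|p_1\|^2$ to the energy (using $p_0=0$ and Definition~\ref{dfn:energy}), I would split
\[
\mathcal{E}_w(p) = w\,\|p_1\|^2 + R(p),
\]
where $R$ collects all remaining edges and is independent of $w$. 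First I would verify that $R$ restricted to $\{p_1 = 0\}$ is precisely the energy of the contracted net $\hat{X}$: under the identification $p_0=p_1=0$, every edge vector of $X$ other than $e_{010}$ equals the corresponding edge vector of $\hat{X}$ (the loop $e_{010}$ drops out). Thus, setting $\hat{\mathcal{E}} := R|_{\{p_1=0\}}$, this restricted energy is the energy of $\hat{X}$, and by Definition~\ref{dfn:harmonic} its unique minimizer is $\hat p = (0,\hat p_2,\dots,\hat p_n)$ with $\hat p_i = \hat\Phi^{(h)}(v_i)$.

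Let $p^{(w)}$ denote the minimizer of $\mathcal{E}_w$. The first key step is to show $p_1^{(w)} \to 0$: comparing with the competitor $\hat p$, whose $w$-term vanishes, gives $w\|p_1^{(w)}\|^2 + R(p^{(w)}) = \mathcal{E}_w(p^{(w)}) \le \mathcal{E}_w(\hat p) = \hat{\mathcal{E}}(\hat p)$, and since $R \ge 0$ this forces $\|p_1^{(w)}\|^2 \le \hat{\mathcal{E}}(\hat p)/w \to 0$; the same inequality yields the uniform bound $R(p^{(w)}) \le \hat{\mathcal{E}}(\hat p)$. The second step is to show that $\{p^{(w)}\}$ is bounded, hence precompact. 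For this I would argue that the set $\{R(p) \le \hat{\mathcal{E}}(\hat p),\ \|p_1\| \le 1\}$ is bounded: any recession direction $\xi$ of the convex quadratic $R$ along which one could escape to infinity with $\|p_1\|$ bounded must have $\xi_1 = 0$, but then $R$ is constant along $\xi$ inside $\{p_1=0\}$, contradicting the coercivity of $R|_{\{p_1=0\}} = \hat{\mathcal{E}}$, which holds because $\hat{X}$ is connected.

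With precompactness in hand I would conclude by a subsequence-and-uniqueness argument. For any competitor $p$ with $p_1 = 0$ one has $R(p^{(w)}) \le \mathcal{E}_w(p^{(w)}) \le \mathcal{E}_w(p) = \hat{\mathcal{E}}(p)$, so any subsequential limit $p^*$ (which satisfies $p_1^* = 0$ by the first step) obeys $\hat{\mathcal{E}}(p^*) = \lim R(p^{(w)}) \le \hat{\mathcal{E}}(p)$ for all such $p$, hence minimizes $\hat{\mathcal{E}}$; uniqueness of the harmonic realization of the connected graph $\hat{X}$ gives $p^* = \hat p$, so the whole family converges, i.e. $\Phi^{(h)}(v_i) \to \hat\Phi^{(h)}(v_i)$ and $\Phi^{(h)}(v_1) \to 0$, whence $\Phi^{(h)}(e_{010}) = p_1^{(w)} \to 0$. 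For the statements on $\mathcal{T}$ and $\mathcal{E}$, continuity gives $R(p^{(w)}) \to \hat{\mathcal{E}}(\hat p)$, while $R(p^{(w)}) \le \mathcal{E}_w(p^{(w)}) \le \hat{\mathcal{E}}(\hat p)$ squeezes $w\|p_1^{(w)}\|^2 = \mathcal{E}_w(p^{(w)}) - R(p^{(w)}) \to 0$; thus even the blown-up edge $e_{010}$ contributes vanishingly to the energy and (its tension term having norm at most $w\|p_1^{(w)}\|^2$) to the tension tensor, while all other edge vectors converge to those of $\hat\Phi^{(h)}$, so the defining sums converge to $\mathcal{T}(\hat{X},\hat\Phi^{(h)})$ and $\mathcal{E}(\hat{X},\hat\Phi^{(h)})$.

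The main obstacle is the compactness step: $R$ need not be coercive on all of $(\mathbb{R}^N)^n$, since deleting $e_{010}$ may create zero-energy directions, so boundedness of $\{p^{(w)}\}$ cannot be read off from $R$ alone and must be extracted from the constraint on $\|p_1\|$ together with the positive-definiteness of the restricted form $\hat{\mathcal{E}}$. Identifying that restricted form with the energy of $\hat{X}$, and thereby invoking connectivity of the contracted graph, is the crux of the argument.
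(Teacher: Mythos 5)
Your proof is correct, but it reaches the conclusion by a genuinely different mechanism than the paper. You share with the paper the two ``identification'' steps: the comparison $\mathcal{E}_w(p^{(w)})\le\mathcal{E}_w(\hat p)=\hat{\mathcal{E}}(\hat p)$, which forces $w\|p_1^{(w)}\|^2\to 0$, and the uniqueness of the harmonic realization of the connected contracted graph to pin down the limit. Where you diverge is in establishing that a limit exists at all: the paper writes the harmonicity condition as the linear system $B_{00}(\xi^k_1,\dots,\xi^k_n)^{\mathsf T}=(c^k_1,\dots,c^k_n)^{\mathsf T}$ and applies Cramer's rule, observing that only the entry $b_{11}$ involves $w_{010}$, so each coordinate $\xi^k_i=\det C^k_i/\det B_{00}$ is a ratio of affine functions of $w_{010}$ and converges for free (with a separate easy case when $\det B_{00}$ is constant). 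You instead run a soft convexity argument --- sublevel-set precompactness via the recession cone of the quadratic $R$ together with coercivity of $R|_{\{p_1=0\}}=\hat{\mathcal{E}}$, then subsequence-plus-uniqueness. Your route is more robust (it handles the paper's degenerate case uniformly and would survive replacing $\|\cdot\|^2$ by any strictly convex coercive edge energy), and you correctly identify the compactness step as the crux, since $R$ alone need not be coercive after the edge is deleted. What the paper's explicit linear algebra buys, and your argument does not, is the exact rational dependence $\Phi^{(h)}(e_{010})=\bm{z}/(w_{010}+W)$, which is the substance of Theorem~\ref{thm:loss} and Lemma~\ref{lem:weight}; the Cramer's-rule setup in this lemma is reused there, so the paper's choice is not just a matter of taste.
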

\begin{proof}
Consider a (not necessarily harmonic) periodic realization $\Phi$ of $X$. 
We write $\Phi (v_{i}) = (x^{1}_{i}, \dots , x^{N}_{i}) \in \mathbb{R}^{N}$ 
and $\rho (\gamma) = (\rho^{1}_{\gamma}, \dots , \rho^{N}_{\gamma}) \in \mathbb{R}^{N}$ 
for $\gamma \in L$. 
Then, $\rho^{k}_{-\gamma} = -\rho^{k}_{\gamma}$. 
By Theorem~\ref{thm:harmonic}, 
the realization $\Phi$ is harmonic if and only if 
\begin{equation}
\label{eq:harmonic}
\sum_{j=0}^{n} \sum_{\gamma \in L} w_{ij\gamma} (-x^{k}_{i}+x^{k}_{j}+\rho^{k}_{\gamma}) = 0
\end{equation} 
for any $0 \leq i \leq n$ and $1 \leq k \leq N$. 
Let $x^{k}_{i} = \xi^{k}_{i}$ be a solution of this system of equations, 
which is unique up to translations. 
We may assume that $\xi^{k}_{0} = 0$. 
Then, $\Phi^{(h)}(v_{i}) = (\xi^{1}_{i}, \dots , \xi^{N}_{i})$. 
We write $w_{ij} = \sum_{\gamma} w_{ij\gamma}$, 
$b_{ij} = -w_{ij}$ for $i \neq j$, 
$b_{ii} = \sum_{j \neq i} w_{ij}$, and 
$c^{k}_{i} = \sum_{j,\gamma} w_{ij\gamma}\rho^{k}_{\gamma}$. 
Using the matrix $B = (b_{ij})$, 
the system of linear equations is written as 
$B (x^{k}_{0}, \dots, x^{k}_{n})^{\mathsf{T}}
= (c^{k}_{0}, \dots, c^{k}_{n})^{\mathsf{T}}$. 
Since $\xi^{k}_{0} = 0$ and $(\xi^{k}_{i})_{1 \leq i \leq n}$ are unique, 
we have 
$B_{00} (\xi^{k}_{1}, \dots, \xi^{k}_{n})^{\mathsf{T}}
= (c^{k}_{1}, \dots, c^{k}_{n})^{\mathsf{T}}$,
where $B_{00} = (b_{ij})_{1 \leq i,j \leq n}$ is a minor of $B$. 
Then, $B_{00}$ is invertible. 
Cramer's rule implies that 
$\xi^{k}_{i} = \det C^{k}_{i} / \det B_{00}$, 
where $C^{k}_{i}$ is the matrix obtained by replacing the $i$-th column of $B_{00}$ 
with $(c^{k}_{1}, \dots, c^{k}_{n})^{\mathsf{T}}$. 
In this presentation of $\det C^{k}_{i} / \det B_{00}$, 
only $b_{11} = \sum_{j \neq 1,\gamma} w_{1j\gamma}$ 
contains $w_{010}=w_{100}$. (Note that $\rho^{k}_{0} = 0$.) 
Hence, $\det B_{00}$ and $\det C^{k}_{i}$ are linear functions of $w_{010}$. 
Moreover, $\det C^{k}_{1}$ is constant for $w_{010}$.

Since $\Phi^{(h)}$ is harmonic, 
we obtain $\mathcal{E}(X,\Phi^{(h)}) \leq \mathcal{E}(\hat{X},\hat{\Phi}^{(h)})$ 
by regarding $\hat{\Phi}^{(h)}$ as a realization of $X$. 
Moreover, we have $w_{010} \|\Phi^{(h)}(e_{010})\|^{2} \leq \mathcal{E}(X,\Phi^{(h)})$. 
Hence, $\lim\limits_{w_{010} \to \infty} \Phi^{(h)}(e_{010}) = 0$. 
In other words, $\lim\limits_{w_{010} \to \infty} \xi^{k}_{1} = 0$. 
If $\det B_{00}$ is constant for $w_{010}$, 
then $\xi^{k}_{1} = \det C^{k}_{1} / \det B_{00}$ is also constant. 
Hence, $\xi^{k}_{1} = 0$. 
Then, $\Phi^{(h)} = \hat{\Phi}^{(h)}$, and the assertion holds trivially. 

Suppose that $\det B_{00}$ is not constant for $w_{010}$. 
Then, $\xi^{k}_{i} = \det C^{k}_{i} / \det B_{00}$ converges as $w_{010} \to \infty$. 
We define the realization $\hat{\Phi}$ of $\hat{X}$ such that 
$\hat{\Phi} (\hat{v}) = 0$ and 
$\hat{\Phi} (v_{i}) = \lim\limits_{w_{010} \to \infty} \Phi^{(h)} (v_{i})$ for $2 \leq i \leq n$. 
Since $\mathcal{E}(\hat{X},\hat{\Phi}) = \lim\limits_{w_{010} \to \infty} \mathcal{E}(X,\Phi^{(h)}) 
\leq \mathcal{E}(\hat{X},\hat{\Phi}^{(h)})$ 
and $\hat{\Phi}^{(h)}$ is harmonic, 
the realization $\hat{\Phi}$ is also harmonic. 
The uniqueness of a harmonic realization implies that $\hat{\Phi} = \hat{\Phi}^{(h)}$. 
Therefore, $\lim\limits_{w_{010} \to \infty} \Phi^{(h)} = \hat{\Phi}^{(h)}$. 
\end{proof}

\begin{thm}
\label{thm:loss}
There are $\bm{z} \in \mathbb{R}^{N}$ and $W \in \mathbb{R}$ such that 
\[
\Phi^{(h)}(e_{010}) = \dfrac{\bm{z}}{w_{010}+W}
\] 
and 
\[
\mathcal{T}(\hat{X},\hat{\Phi}^{(h)}) - \mathcal{T}(X,\Phi^{(h)}) 
= \dfrac{\bm{z}^{\otimes 2}}{w_{010}+W}, 
\]
where $\bm{z}$ and $W$ do not depend on $w_{010}$, 
and $W$ does not depend on $\rho$. 
Consequently, 
\[
\mathcal{E}(\hat{X},\hat{\Phi}^{(h)}) - \mathcal{E}(X,\Phi^{(h)}) 
= \dfrac{\|\bm{z}\|^{2}}{w_{010}+W}
= (w_{010}+W) \| \Phi^{(h)}(e_{010}) \|^{2}. 
\]
\end{thm}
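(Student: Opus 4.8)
The plan is to leverage the explicit linear-algebraic description of $\Phi^{(h)}$ obtained in the proof of Lemma~\ref{lem:limit} for the first identity, and then to establish the tensor identity through a differential (envelope-type) argument, from which the energy identity follows by taking a trace.

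First I would read off the first formula from Cramer's rule. In the notation of Lemma~\ref{lem:limit}, $\Phi^{(h)}(e_{010}) = (\xi^{1}_{1}, \dots, \xi^{N}_{1})$ with $\xi^{k}_{1} = \det C^{k}_{1} / \det B_{00}$, where $\det C^{k}_{1}$ is constant in $w_{010}$ while $\det B_{00}$ is an affine function $\alpha\, w_{010} + \beta$ whose coefficients involve only the weights $w_{ij}$ and not $\rho$. Setting $\bm{z} = \alpha^{-1}(\det C^{1}_{1}, \dots, \det C^{N}_{1})$ and $W = \beta / \alpha$ then gives $\Phi^{(h)}(e_{010}) = \bm{z} / (w_{010} + W)$, with $\bm{z}$ and $W$ independent of $w_{010}$, and $W$ independent of $\rho$. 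To see that $\alpha \neq 0$, I would identify $\alpha$ with the $(1,1)$-minor of $B_{00}$, that is, the reduced weighted Laplacian of the contracted graph $\hat{X}$ with $\hat{v}$ deleted; since $X$, and hence $\hat{X}$, is connected this minor is positive definite, so $\alpha > 0$.

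The heart of the proof is the tensorial envelope identity
\[
\frac{d}{dw_{010}} \mathcal{T}(X, \Phi^{(h)}) = \Phi^{(h)}(e_{010})^{\otimes 2}.
\]
Writing $\mathcal{T}(X, \Phi) = \tfrac{1}{2} \sum_{e \in E/L} w(e)\, \Phi(e)^{\otimes 2}$ and differentiating in $w_{010}$, the explicit part, coming from the weight of $e_{010}$ and its reverse, contributes exactly $\Phi^{(h)}(e_{010})^{\otimes 2}$. The implicit part, coming from the motion of the vertices $\bm{\delta}_{v} := d\Phi^{(h)}(v) / dw_{010}$, is
\[
\tfrac{1}{2} \sum_{e} w(e) \bigl( \Phi^{(h)}(e) \otimes (\bm{\delta}_{t(e)} - \bm{\delta}_{o(e)}) + (\bm{\delta}_{t(e)} - \bm{\delta}_{o(e)}) \otimes \Phi^{(h)}(e) \bigr),
\]
and I would show that each of the resulting sums vanishes: grouping by origin gives $\sum_{o(e)=v} w(e) \Phi^{(h)}(e) = 0$ by the harmonicity criterion (Theorem~\ref{thm:harmonic}), while the sums weighted by $\bm{\delta}_{t(e)}$ reduce to the same expression via $\iota$, using $\Phi(e) = -\Phi(\iota(e))$, $t(e) = o(\iota(e))$, and $w(e) = w(\iota(e))$. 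Establishing this cancellation is the main obstacle, since it is precisely what distinguishes the tensor $\mathcal{T}$ from its trace $\mathcal{E}$, for which the analogous statement is the familiar first-order optimality of the harmonic energy.

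Finally I would integrate. Both $\mathcal{T}(\hat{X}, \hat{\Phi}^{(h)}) - \mathcal{T}(X, \Phi^{(h)})$ and $\bm{z}^{\otimes 2} / (w_{010} + W) = (w_{010} + W)\, \Phi^{(h)}(e_{010})^{\otimes 2}$ have derivative $-\Phi^{(h)}(e_{010})^{\otimes 2} = -\bm{z}^{\otimes 2} / (w_{010} + W)^{2}$ in $w_{010}$, so they differ by a constant tensor; since both tend to $0$ as $w_{010} \to \infty$ by Lemma~\ref{lem:limit}, that constant is zero, which gives the second identity. Taking the trace and using $\tr(\bm{z}^{\otimes 2}) = \| \bm{z} \|^{2}$ together with $\| \bm{z} \|^{2} = (w_{010} + W)^{2} \| \Phi^{(h)}(e_{010}) \|^{2}$, a consequence of the first identity, yields the energy formula, where Proposition~\ref{prop:trace} identifies the trace of the tension tensor with the energy.
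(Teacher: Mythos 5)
Your proposal is correct and follows essentially the same route as the paper: the first identity via Cramer's rule from Lemma~\ref{lem:limit} (with $\det B_{00}$ affine and $\det C^{k}_{1}$ constant in $w_{010}$), the tensor identity via the envelope computation $d\mathcal{T}/dw_{010} = \Phi^{(h)}(e_{010})^{\otimes 2}$ in which the implicit terms vanish by harmonicity, and the constant of integration fixed by the $w_{010}\to\infty$ limit. Your edge-based cancellation is just a reformulation of the paper's vanishing of $\partial\mathcal{T}^{kl}/\partial x^{\beta}_{\alpha}$ at the harmonic solution, and your observation that $\alpha=\det B_{00,11}>0$ by connectivity even slightly sharpens the paper's separate treatment of the degenerate case.
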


\begin{proof}
We showed that $\xi^{k}_{1} = \det C^{k}_{1} / \det B_{00}$ 
in the proof of Lemma~\ref{lem:limit}. 
The determinants $\det B_{00}$ and $\det C^{k}_{1}$ are, respectively, linear and constant 
as functions of $w_{010} = w_{100}$. 
If $\det B_{00}$ is constant for $w_{010}$, 
then $\xi^{k}_{1} = 0$, and we obtain $\bm{z} = 0$. 
We can take $W$ arbitrarily.

Suppose that $\det B_{00}$ is not constant for $w_{010}$. 
Then, we can write $\det B_{00} = (w_{010} + W) \det B_{00,11}$, 
where $B_{00,11} = (b_{ij})_{2 \leq i,j \leq n}$, 
and $W$ does not depend on $w_{010}$ or $\rho$. 
Let 
$z^{k} = \det C^{k}_{1} / \det B_{00,11}$ 
and $\bm{z} = (z^{1}, \dots , z^{N})$. 
Then, 
\[
\Phi^{(h)}(e_{010}) = (\xi^{1}_{1}, \dots , \xi^{N}_{1}) = \dfrac{\bm{z}}{w_{010} + W}. 
\]
Moreover, $\bm{z}$ does not depend on $w_{010}$.

We regard the tension tensor $\mathcal{T}(X,\Phi)$ 
as a function of $w = (w_{ij\gamma})$ and $x = (x_{i}^{k})$. 
Its $(k,l)$-entry is given by 
\[
\mathcal{T}^{kl}(w,x) = \frac{1}{2} \sum_{i,j=0}^{n} \sum_{\gamma \in L}
w_{ij\gamma} (-x^{k}_{i} + x^{k}_{j} + \rho^{k}_{\gamma}) (-x^{l}_{i} + x^{l}_{j} + \rho^{l}_{\gamma}). 
\]
Then, 
\[
\dfrac{\partial \mathcal{T}^{kl}}{\partial x^{\beta}_{\alpha}} (w,x) = 
-\delta_{\beta k} \sum_{j,\gamma} w_{\alpha j\gamma} 
(-x^{l}_{\alpha}+x^{l}_{j}+\rho^{l}_{\gamma}) 
-\delta_{\beta l} \sum_{j,\gamma} w_{\alpha j\gamma} 
(-x^{k}_{\alpha}+x^{k}_{j}+\rho^{k}_{\gamma}). 
\]
Consider $\mathcal{T}^{kl}(w) = \mathcal{T}^{kl}(w, \xi(w))$ as a function of $w$, 
where $\xi(w) = (\xi^{k}_{i}(w))$. 
By the equality (\ref{eq:harmonic}), we have 
$\dfrac{\partial \mathcal{T}^{kl}}{\partial x^{\beta}_{\alpha}}(w, \xi(w)) = 0$. 
Hence, 
\begin{align*}
\frac{\partial \mathcal{T}^{kl}}{\partial w_{010}} (w)
& = \frac{\partial \mathcal{T}^{kl}}{\partial w_{010}}(w, \xi(w)) + 
\sum_{\alpha, \beta} \frac{\partial \mathcal{T}^{kl}}{\partial x^{\beta}_{\alpha}}(w, \xi(w)) 
\frac{\partial \xi^{\beta}_{\alpha}}{\partial w_{010}}(w) \\ 
& = \frac{\partial \mathcal{T}^{kl}}{\partial w_{010}}(w, \xi(w)) \\ 
& = \xi^{k}_{1} \xi^{l}_{1} \\
& = \dfrac{z^{k} z^{l}}{(w_{010} + W)^{2}}, 
\end{align*}
where we regard $w_{010} = w_{100}$ as a single variable. 
Hence, 
$\mathcal{T}^{kl} = C - z^{k} z^{l} / (w_{010} + W)$ 
for some $C$ independent of $w_{010}$. 
Since 
$\lim\limits_{w_{010} \to \infty} \mathcal{T}(X,\Phi^{(h)}) = \mathcal{T}(\hat{X},\hat{\Phi}^{(h)})$ 
by Lemma~\ref{lem:limit}, 
we have 
\[
\mathcal{T}(\hat{X},\hat{\Phi}^{(h)}) - \mathcal{T}(X,\Phi^{(h)}) 
= \left( \dfrac{z^{k} z^{l}}{w_{010} + W} \right)_{1 \leq k,l \leq N} 
= \dfrac{\bm{z}^{\otimes 2}}{w_{010}+W}. 
\]
\end{proof}

\begin{lem}
\label{lem:weight}
Let the vector $\bm{z}$ and the number $W$ be as in Theorem~\ref{thm:loss}. 
Suppose that $\bm{z} \neq 0$. 
Then, 
\[
0 < W \leq \left( \sum_{j \neq 1}\sum_{\gamma \in L} w_{1j\gamma} \right) - w_{100}.  
\] 
\end{lem}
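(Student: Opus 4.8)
The plan is to read off $W$ from the factorization $\det B_{00} = (w_{010}+W)\det B_{00,11}$ established in Theorem~\ref{thm:loss} and to analyze the two determinants separately. Recall from that proof that within $B_{00}$ only the entry $b_{11}=\sum_{j\neq 1}w_{1j}=\sum_{j\neq 1}\sum_{\gamma}w_{1j\gamma}$ depends on $w_{010}=w_{100}$, and it does so linearly with coefficient $1$. Hence the constant term of the affine function $w_{010}\mapsto\det B_{00}$ equals $\det M$, where $M:=B_{00}|_{w_{010}=0}$, and comparing with the factorization gives $W\det B_{00,11}=\det M$. The matrix $M$ is a principal submatrix of the Laplacian of the weighted quotient graph with the edge $e_{010}$ deleted, so it is positive semidefinite; likewise $B_{00,11}$ is a principal submatrix of the same Laplacian and does not involve $w_{010}$. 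Finally, since $\bm{z}\neq 0$, Theorem~\ref{thm:loss} forces $\det B_{00}$ to be nonconstant, so its leading coefficient $\det B_{00,11}$ is nonzero; being a principal minor of a positive semidefinite matrix, it is in fact positive. Thus $W=\det M/\det B_{00,11}$ has the same sign as $\det M$.

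For the upper bound I would apply Fischer's inequality to the positive semidefinite matrix $M$, split according to the index partition $\{1\}\sqcup\{2,\dots,n\}$. This yields
\[
\det M \leq m_{11}\,\det\big(M|_{\{2,\dots,n\}}\big)=\Big(\sum_{j\neq 1}\sum_{\gamma}w_{1j\gamma}-w_{100}\Big)\det B_{00,11},
\]
because the $(1,1)$-entry of $M$ is $b_{11}-w_{010}=\sum_{j\neq 1}\sum_\gamma w_{1j\gamma}-w_{100}$, while the submatrix of $M$ on $\{2,\dots,n\}$ is exactly $B_{00,11}$ (unaffected by setting $w_{010}=0$). Dividing by $\det B_{00,11}>0$ gives the claimed inequality $W\leq\big(\sum_{j\neq1}\sum_\gamma w_{1j\gamma}\big)-w_{100}$; when $n=1$ the inequality degenerates to an equality, consistent with $B_{00,11}$ being the empty determinant.

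For the lower bound I would argue by contraposition that $\det M>0$. Since $M$ is the reduced Laplacian (delete the vertex $v_0$) of the quotient graph with $e_{010}$ removed, $\det M=0$ exactly when that graph is disconnected, necessarily into two pieces with $v_0$ and $v_1$ separated and with $e_{010}$ the only edge of the quotient joining them. Let $I_1$ be the set of representatives in the component of $v_1$. Summing the harmonicity equation \eqref{eq:harmonic} over $i\in I_1$, the contributions with $j\in I_1$ cancel in pairs under the involution $(i,j,\gamma)\mapsto(j,i,-\gamma)$ using $w_{ij\gamma}=w_{ji,-\gamma}$ and $\rho^{k}_{-\gamma}=-\rho^{k}_{\gamma}$, and the only surviving cross term is the one coming from $e_{010}$, namely $w_{100}(\xi^{k}_{0}-\xi^{k}_{1})$. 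Since a genuine disconnection forces $w_{010}>0$, this gives $\Phi^{(h)}(v_1)=\Phi^{(h)}(v_0)$, hence $\Phi^{(h)}(e_{010})=0$ and $\bm{z}=0$ by Theorem~\ref{thm:loss}, contradicting $\bm{z}\neq 0$. Therefore $\det M>0$ and $W>0$.

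I expect the positivity claim to be the main obstacle: the upper bound is immediate from Fischer's inequality once positive semidefiniteness is noted, whereas ruling out $W=0$ requires the graph-theoretic identification of $\det M$ with connectivity of the quotient after deleting $e_{010}$, together with the careful cancellation in the summed harmonicity equation, including the bookkeeping of the period $\gamma$ and of possible multiple edges between $v_0$ and $v_1$.
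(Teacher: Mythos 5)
Your proof is correct, and it splits into one half that matches the paper and one half that is genuinely different. For the upper bound you and the paper do essentially the same thing: writing $w_{010}+W=\det B_{00}/\det B_{00,11}$ and bounding it by the $(1,1)$ diagonal entry using positive semidefiniteness. The paper phrases this as the Schur-complement identity (its Lemma~\ref{lem:expansion}) applied to $B_{00}$ together with positive definiteness of $B_{00,11}^{-1}$ (via Lemma~\ref{lem:posdef}), whereas you phrase it as Fischer's inequality applied to $M=B_{00}|_{w_{010}=0}$; these are the same computation, since Fischer's inequality is exactly the statement that the Schur complement term $(b_{12},\dots,b_{1n})B_{00,11}^{-1}(b_{12},\dots,b_{1n})^{\mathsf T}$ is nonnegative. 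Where you genuinely diverge is the positivity $W>0$. The paper gets this in one line from the energy identity of Theorem~\ref{thm:loss}: since $\|\bm{z}\|^{2}/(w_{010}+W)=\mathcal{E}(\hat{X},\hat{\Phi}^{(h)})-\mathcal{E}(X,\Phi^{(h)})$ is a finite nonnegative quantity bounded by $\mathcal{E}(\hat{X},\hat{\Phi}^{(h)})$ for every $w_{010}\geq 0$, and $\bm z\neq 0$, the denominator cannot vanish or go negative on $[0,\infty)$, forcing $W>0$. You instead identify $W\det B_{00,11}$ with $\det M$, invoke the weighted matrix-tree theorem to reduce positivity to connectivity of the quotient graph with $e_{010}$ deleted, and rule out disconnection by the summed-harmonicity cancellation showing it would force $\bm z=0$. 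Your route is longer and leans on the matrix-tree theorem, but it is self-contained at the combinatorial level and pinpoints the degenerate case ($e_{010}$ a bridge in the merged quotient graph) explicitly; the paper's route is shorter because it recycles the energy formula it has already proved. Both are sound.
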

\begin{proof}
Since 
\[
\dfrac{\|\bm{z}\|^{2}}{w_{010}+W} 
= \mathcal{E}(\hat{X},\hat{\Phi}^{(h)}) - \mathcal{E}(X,\Phi^{(h)}) 
< \mathcal{E}(\hat{X},\hat{\Phi}^{(h)}) 
\]
for any $w_{010} \geq 0$, 
we have $W > 0$. 

As in the proof of Theorem~\ref{thm:loss}, 
we have $w_{010} + W = \det B_{00} / \det B_{00,11}$, where 
$w_{ij} = \sum_{\gamma} w_{ij\gamma}$, 
$b_{ij} = -w_{ij}$ for $i \neq j$, 
$b_{ii} = \sum_{j \neq i} w_{ij}$,
$B_{00} = (b_{ij})_{1 \leq i,j \leq n}$, 
and $B_{00,11} = (b_{ij})_{2 \leq i,j \leq n}$. 
Then, 
\[
\dfrac{\det B_{00}}{\det B_{00,11}} = b_{11} - 
(b_{12}, \dots , b_{1n})
B_{00,11}^{-1}
(b_{12}, \dots , b_{1n})^{\mathsf{T}}
\]
by Lemma~\ref{lem:expansion}. 
Since $B_{00,11}$ is positive definite 
by Lemma~\ref{lem:posdef} and $\det B_{00,11} \neq 0$, 
so is $B_{00,11}^{-1}$. 
Therefore, $w_{010} + W \leq b_{11} = \sum_{j \neq 1}\sum_{\gamma \in L} w_{1j\gamma}$. 
Note that if $n=1$, we conventionally set $\det B_{00,11} =1$ and $w_{010} + W = b_{11}$. 
\end{proof}

\begin{lem}
\label{lem:expansion}
Let $A = (a_{ij})_{1 \leq i,j \leq n}$ be a symmetric matrix. 
Suppose that $A_{11} = (a_{ij})_{2 \leq i,j \leq n}$ is invertible. 
Then 
\[
\frac{\det A}{\det A_{11}} = 
a_{11} - 
(a_{12}, \dots , a_{1n})
A_{11}^{-1}
(a_{12}, \dots , a_{1n})^{\mathsf{T}}.
\]
\end{lem}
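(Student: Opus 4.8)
The plan is to recognize this as the classical Schur complement identity for the determinant and to prove it by a block triangular factorization of $A$, which avoids any appeal to Cramer's rule and hence works regardless of whether $A$ itself is invertible. First I would write $A$ in block form relative to the splitting of indices into $\{1\}$ and $\{2,\dots,n\}$. Setting $\bm{b} = (a_{12},\dots,a_{1n})^{\mathsf{T}}$, the symmetry of $A$ guarantees that the off-diagonal blocks are $\bm{b}$ and its transpose, so that
\[
A = \begin{pmatrix} a_{11} & \bm{b}^{\mathsf{T}} \\ \bm{b} & A_{11} \end{pmatrix}.
\]

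Next I would introduce the Schur complement $s = a_{11} - \bm{b}^{\mathsf{T}} A_{11}^{-1} \bm{b}$, which is exactly the right-hand side of the asserted formula, and exhibit the factorization
\[
A = \begin{pmatrix} 1 & \bm{b}^{\mathsf{T}} A_{11}^{-1} \\ \bm{0} & I \end{pmatrix}
\begin{pmatrix} s & \bm{0} \\ \bm{0} & A_{11} \end{pmatrix}
\begin{pmatrix} 1 & \bm{0} \\ A_{11}^{-1}\bm{b} & I \end{pmatrix}.
\]
This identity is well defined precisely because $A_{11}$ is assumed invertible, and it is verified by direct block multiplication, the only point to check being that the $(1,1)$ entry recombines as $s + \bm{b}^{\mathsf{T}} A_{11}^{-1}\bm{b} = a_{11}$. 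Taking determinants, the two unipotent triangular factors contribute $1$ each, so $\det A = s \,\det A_{11}$, and dividing by $\det A_{11} \neq 0$ gives the claim.

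I do not expect a genuine obstacle here: the content is a single algebraic identity whose verification is routine matrix arithmetic. The only things worth stating explicitly are that the symmetry of $A$ is what forces the upper-right block to be $\bm{b}^{\mathsf{T}}$, and that working with the factorization rather than with quotients of determinants means the argument never divides by $\det A$, so no separate treatment of the degenerate case $\det A = 0$ is needed. For completeness I would also note the convention that when $n = 1$ the block $A_{11}$ is empty, $\det A_{11} = 1$, and the formula reduces to $\det A = a_{11}$.
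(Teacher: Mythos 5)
Your proof is correct, but it takes a genuinely different route from the paper's. The paper proves the identity by writing $A_{11}^{-1}$ via the adjugate, $A_{11}^{-1} = (\det A_{11})^{-1}\bigl((-1)^{i+j}\det A_{11,ji}\bigr)$, and then performing a cofactor expansion of $\det A$ along the first row and first column simultaneously, so that the cross terms $(-1)^{i+j+1}a_{i1}a_{1j}\det A_{11,ij}$ are recognized as the entries of $(\det A_{11})A_{11}^{-1}$ sandwiched between the row vector $(a_{12},\dots,a_{1n})$ and its transpose. You instead exhibit the block unipotent factorization
\[
A = \begin{pmatrix} 1 & \bm{b}^{\mathsf{T}} A_{11}^{-1} \\ \bm{0} & I \end{pmatrix}
\begin{pmatrix} s & \bm{0} \\ \bm{0} & A_{11} \end{pmatrix}
\begin{pmatrix} 1 & \bm{0} \\ A_{11}^{-1}\bm{b} & I \end{pmatrix}
\]
and take determinants. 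Both arguments are complete and use only the invertibility of $A_{11}$; neither needs $\det A \neq 0$. Your factorization is the more structural of the two: it makes the Schur complement visible, generalizes verbatim to a top-left block of any size, and as a bonus shows (by Sylvester's law of inertia) that $A$ is positive semi-definite exactly when $A_{11}$ is positive definite and $s \geq 0$, which is in the spirit of how the lemma is used in Lemma~\ref{lem:weight}. The paper's cofactor computation is more elementary and self-contained but requires careful bookkeeping of signs in the double expansion. One small remark: the symmetry hypothesis is not essential to either argument; it only serves to make the two off-diagonal blocks transposes of one another so the formula takes the stated quadratic-form shape.
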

\begin{proof}
Using the adjugate matrix, we have 
\[
A_{11}^{-1} = 
(\det A_{11})^{-1} \left( (-1)^{i+j}\det A_{11,ji} \right)_{2 \leq i,j \leq n}, 
\]
where $A_{11,ij} = (a_{kl})_{k\neq 1,i, l \neq 1,j}$. 
The cofactor expansion implies that 
\begin{align*}
\det A 
& = a_{11} \det A_{11} 
+ \sum_{2 \leq i,j \leq n} (-1)^{i+j+1} a_{i1}a_{1j} \det A_{11,ij} \\
& = a_{11} \det A_{11} - 
(a_{12}, \dots , a_{1n})
(\det A_{11}) A_{11}^{-1}
(a_{12}, \dots , a_{1n})^{\mathsf{T}}.
\end{align*}
\end{proof}

\begin{lem}
\label{lem:posdef}
Let $A = (a_{ij})_{1 \leq i,j \leq n}$ be a symmetric matrix. 
Suppose that $a_{ij} \leq 0$ for any $i \neq j$ and 
$\sum_{j=1}^{n} a_{ij} \geq 0$ for any $i$. 
Then, $A$ is positive semi-definite. 
\end{lem}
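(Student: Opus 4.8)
The plan is to verify positive semi-definiteness directly from the definition, by showing that the associated quadratic form $\bm{x}^{\mathsf{T}} A \bm{x}$ is a sum of squares with non-negative coefficients, and is therefore non-negative for every $\bm{x} \in \mathbb{R}^{n}$.

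First I would record the diagonal entries in a useful form. Setting $r_{i} = \sum_{j=1}^{n} a_{ij}$ for the $i$-th row sum, the second hypothesis reads $r_{i} \geq 0$, and since the first hypothesis gives $a_{ij} = -|a_{ij}|$ for $j \neq i$, we obtain
\[
a_{ii} = r_{i} - \sum_{j \neq i} a_{ij} = r_{i} + \sum_{j \neq i} |a_{ij}|.
\]
The point is that the diagonal carries an excess mass $\sum_{j \neq i} |a_{ij}|$ beyond the non-negative amount $r_{i}$, and this excess is exactly what is needed to dominate the off-diagonal cross terms.

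Next I would expand $\bm{x}^{\mathsf{T}} A \bm{x} = \sum_{i} a_{ii} x_{i}^{2} + 2 \sum_{i<j} a_{ij} x_{i} x_{j}$ using symmetry, substitute the expression above for $a_{ii}$, and collect terms pairwise. The excess diagonal mass regroups as $\sum_{i<j} |a_{ij}| (x_{i}^{2} + x_{j}^{2})$ by the symmetry $|a_{ij}| = |a_{ji}|$, while the cross terms read $-2 \sum_{i<j} |a_{ij}| x_{i} x_{j}$; completing the square on each unordered pair then yields the identity
\[
\bm{x}^{\mathsf{T}} A \bm{x} = \sum_{i=1}^{n} r_{i} x_{i}^{2} + \sum_{i<j} |a_{ij}| (x_{i} - x_{j})^{2}.
\]
Since $r_{i} \geq 0$ and $|a_{ij}| \geq 0$, the right-hand side is non-negative, which is precisely the assertion.

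There is no substantive obstacle here; the argument is an elementary sum-of-squares identity, and the only care needed lies in the index bookkeeping of the regrouping step, where the symmetry $a_{ij} = a_{ji}$ is used to match the coefficient $|a_{ij}|$ across the pair $(i,j)$ and the sign condition $a_{ij} \leq 0$ is used exactly once, to convert the cross terms into the negative part of a perfect square.
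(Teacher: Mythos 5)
Your proof is correct, and it takes a genuinely different route from the paper. The paper argues by induction on $n$, writing $A$ as the sum of the diagonal matrix of row sums and a conjugate $P^{\mathsf{T}} A^{\prime} P$ of a smaller matrix of the same type, and invoking the inductive hypothesis on $A^{\prime}$. You instead prove the explicit sum-of-squares identity
\[
\bm{x}^{\mathsf{T}} A \bm{x} \;=\; \sum_{i=1}^{n} r_{i}\, x_{i}^{2} \;+\; \sum_{i<j} |a_{ij}|\,(x_{i}-x_{j})^{2},
\]
which is the standard decomposition of such a matrix as a weighted graph Laplacian plus a non-negative diagonal; the regrouping you describe checks out, with the symmetry $|a_{ij}|=|a_{ji}|$ and the sign condition each used exactly where you say. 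Your version is arguably more informative: it needs no induction, and the identity makes the kernel visible (the form vanishes exactly when $x_{i}=x_{j}$ whenever $a_{ij}\neq 0$ and $x_{i}=0$ whenever $r_{i}>0$), which the paper's factorization obscures. What the paper's approach buys in exchange is a purely matrix-algebraic decomposition that avoids expanding the quadratic form entry by entry, but for this lemma that is not a significant saving. Either proof is acceptable.
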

\begin{proof}
The proof is by induction on $n$. 
The assertion is trivial for the case $n=1$. 
We have 
\[
A = 
\diag \left( \sum_{j=1}^{n} a_{1j}, \dots , \sum_{j=1}^{n} a_{nj} \right)
+ P^{\mathsf{T}} A^{\prime} P, 
\]
where 
\[
A^{\prime} = 
\begin{pmatrix}
a_{11} & \cdots & a_{1,n-1} & 0 \\
\vdots & \ddots & \vdots & \vdots \\
a_{n-1,1} & \cdots & a_{n-1,n-1} & 0 \\
0 & \cdots & 0 & 0 
\end{pmatrix}
- \diag \left( \sum_{j=1}^{n} a_{1j}, \dots, \sum_{j=1}^{n} a_{n-1,j}, 0 \right)
\]
and 
\[
P = 
\begin{pmatrix}
1      & \cdots & 0      & -1 \\
\vdots & \ddots & \vdots & \vdots \\
0      & \cdots & 1      & -1 \\
0      & \cdots & 0      & 1 
\end{pmatrix}.
\]
The matrix $A^{\prime}$ is positive semi-definite by the assumption of induction for $n-1$. 
Therefore, $A$ is also positive semi-definite. 
\end{proof}

\begin{thm}
\label{thm:estimate}
Let $\Phi^{(a)}$ be a realization of $X$ such that 
$\Phi^{(a)}(v_{i}) = \hat{\Phi}^{(h)}(v_{i})$ for any $i \neq 1$ 
and $\Phi^{(a)}$ is harmonic around $v_{1}$. 
Then, $\|\Phi^{(h)}(e_{010})\| \geq \|\Phi^{(a)}(e_{010})\|$. 
\end{thm}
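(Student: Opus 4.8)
The plan is to reduce the desired inequality to a comparison of two explicit rational expressions in the variable $w_{010}$. Since $\Phi^{(h)}(v_{0})=\hat\Phi^{(h)}(\hat v)=0$ and $\Phi^{(a)}(v_{0})=\hat\Phi^{(h)}(v_{0})=0$, and since $e_{010}$ runs from $v_{0}$ to $v_{1}$, both edge vectors reduce to the position of $v_{1}$, namely $\Phi^{(h)}(e_{010})=\Phi^{(h)}(v_{1})$ and $\Phi^{(a)}(e_{010})=\Phi^{(a)}(v_{1})$. Thus it suffices to prove $\|\Phi^{(h)}(v_{1})\|\ge\|\Phi^{(a)}(v_{1})\|$. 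For the left-hand side I would simply quote Theorem~\ref{thm:loss}: $\Phi^{(h)}(v_{1})=\Phi^{(h)}(e_{010})=\bm{z}/(w_{010}+W)$, with $\bm{z}$ and $W$ independent of $w_{010}$. If $\bm{z}=0$ the right-hand side will also vanish (handled below) and the claim is trivial, so I assume $\bm{z}\neq 0$.

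Next I would compute $\Phi^{(a)}(v_{1})$ from the single harmonicity equation $\sum_{o(e)=v_{1}}w(e)\Phi^{(a)}(e)=0$. The self-edges ($j=1$) drop out by the antisymmetry $\rho_{-\gamma}=-\rho_{\gamma}$, and inserting $\Phi^{(a)}(v_{j})=\hat\Phi^{(h)}(v_{j})$ for $j\ne 1$ together with $\Phi^{(a)}(v_{0})=0$ gives
\[
(w_{010}+D')\,\Phi^{(a)}(v_{1})=\bm{s},\qquad
D'=\!\!\sum_{\substack{j\ne 1,\ \gamma\\ (j,\gamma)\ne(0,0)}}\!\! w_{1j\gamma},\qquad
\bm{s}=\!\!\sum_{\substack{j\ne 1,\ \gamma\\ (j,\gamma)\ne(0,0)}}\!\! w_{1j\gamma}\bigl(\hat\Phi^{(h)}(v_{j})+\rho_{\gamma}\bigr),
\]
so that $\Phi^{(a)}(v_{1})=\bm{s}/(w_{010}+D')$, of exactly the same shape as $\Phi^{(h)}(v_{1})$.

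The crux, and the step I expect to be the main obstacle, is the identification $\bm{s}=\bm{z}$ of the two numerators. To establish it I would take $w_{010}\to\infty$ in the harmonicity equation at $v_{1}$ for $\Phi^{(h)}$ itself, which reads $(w_{010}+D')\,\Phi^{(h)}(v_{1})=\sum_{(j,\gamma)\ne(0,0),\,j\ne 1} w_{1j\gamma}(\Phi^{(h)}(v_{j})+\rho_{\gamma})$. By Lemma~\ref{lem:limit} the right-hand side converges to $\bm{s}$, while the left-hand side equals $\bm{z}\,(w_{010}+D')/(w_{010}+W)\to\bm{z}$; hence $\bm{z}=\bm{s}$. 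In the degenerate case where $\det B_{00}$ is constant, Lemma~\ref{lem:limit} gives $\Phi^{(h)}=\hat\Phi^{(h)}$, whence $\bm{z}=\bm{s}=0$ and $\Phi^{(a)}(v_{1})=0$, which confirms the trivial case.

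Finally I would invoke Lemma~\ref{lem:weight}, which yields $0<W\le\bigl(\sum_{j\ne 1,\gamma}w_{1j\gamma}\bigr)-w_{100}=D'$. Since $\bm{z}=\bm{s}$ and the denominators satisfy $w_{010}+W\le w_{010}+D'$, I conclude
\[
\|\Phi^{(h)}(v_{1})\|=\frac{\|\bm{z}\|}{w_{010}+W}\ge\frac{\|\bm{z}\|}{w_{010}+D'}=\|\Phi^{(a)}(v_{1})\|,
\]
which is the asserted estimate. The only genuinely nontrivial point is the coincidence $\bm{z}=\bm{s}$; everything else is bookkeeping with the harmonicity relation at $v_{1}$ and the bound on $W$ already furnished by Lemma~\ref{lem:weight}.
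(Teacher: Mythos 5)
Your proof is correct, and it departs from the paper's exactly at the step you identify as the crux. The paper likewise solves the harmonicity equation at $v_{1}$ to get $\Phi^{(a)}(e_{010})=\bm{z}^{(a)}/(w_{010}+W^{(a)})$ with $W^{(a)}$ equal to your $D'$, and likewise invokes Lemma~\ref{lem:weight} for $W\leq W^{(a)}$; but rather than proving $\bm{z}=\bm{z}^{(a)}$, it expands the quadratic energy to get $\mathcal{E}(\hat{X},\hat{\Phi}^{(h)})-\mathcal{E}(X,\Phi^{(a)})=\|\bm{z}^{(a)}\|^{2}/(w_{010}+W^{(a)})$, pairs this with the analogous identity for $\Phi^{(h)}$ from Theorem~\ref{thm:loss}, and then writes $\|\Phi^{(h)}(e_{010})\|^{2}$ and $\|\Phi^{(a)}(e_{010})\|^{2}$ as ratios of energy drops to $(w_{010}+W)$-type denominators, so that the variational inequality $\mathcal{E}(X,\Phi^{(h)})\leq\mathcal{E}(X,\Phi^{(a)})$ pushes the numerator up while Lemma~\ref{lem:weight} pushes the denominator down. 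Your replacement of that energy comparison by the exact identity $\bm{z}=\bm{z}^{(a)}$ — obtained by letting $w_{010}\to\infty$ in the harmonicity equation at $v_{1}$, using Lemma~\ref{lem:limit} for the right-hand side and the closed form $\Phi^{(h)}(v_{1})=\bm{z}/(w_{010}+W)$ for the left — is valid: the sums over $\gamma$ are finite because $X/L$ is finite, and the degenerate case $\bm{z}=0$ is handled. What your route buys is a strictly stronger conclusion, namely that $\Phi^{(h)}(e_{010})$ and $\Phi^{(a)}(e_{010})$ are parallel with explicit ratio $(w_{010}+W^{(a)})/(w_{010}+W)\geq 1$; what it costs is reliance on the full convergence statement of Lemma~\ref{lem:limit}, which the paper's energy argument does not need at this point.
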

\begin{proof}
Since $\Phi^{(a)}$ is harmonic around $v_{1}$ and $\sum_{\gamma \in L} w_{11\gamma} \rho (\gamma) = 0$, 
we have 
\[
\sum_{j \neq 1}\sum_{\gamma \in L} w_{1j\gamma}(-\Phi^{(a)}(v_{1}) + \hat{\Phi}^{(h)}(v_{j}) + \rho (\gamma)) = 0. 
\]
Hence, 
\[
\Phi^{(a)}(v_{1}) = 
\frac{\sum_{j \neq 1}\sum_{\gamma \in L} w_{1j\gamma}(\hat{\Phi}^{(h)}(v_{j}) + \rho (\gamma))}{\sum_{j \neq 1}\sum_{\gamma \in L} w_{1j\gamma}}. 
\]
Let 
\begin{align*}
\bm{z}^{(a)} 
&= \sum_{j \neq 1}\sum_{\gamma \in L} w_{1j\gamma}(\hat{\Phi}^{(h)}(v_{j}) + \rho (\gamma)), \\
W^{(a)} 
&= \left( \sum_{j \neq 1}\sum_{\gamma \in L} w_{1j\gamma} \right) - w_{100}. 
\end{align*}
Then, $\Phi^{(a)}(e_{010}) = \Phi^{(a)}(v_{1}) = \bm{z}^{(a)} / (w_{010}+W^{(a)})$. 
The difference of energies is given by 
\begin{align*}
\mathcal{E}(\hat{X}, \hat{\Phi}^{(h)}) - \mathcal{E}(X, \Phi^{(a)}) 
& = \sum_{j \neq 1}\sum_{\gamma \in L} w_{1j\gamma}\|\hat{\Phi}^{(h)}(v_{j}) + \rho (\gamma)\|^{2} \\
& \quad - \sum_{j \neq 1}\sum_{\gamma \in L} w_{1j\gamma}\|-\Phi^{(a)}(v_{1}) + \hat{\Phi}^{(h)}(v_{j}) + \rho (\gamma)\|^{2} \\
& = -\sum_{j \neq 1}\sum_{\gamma \in L} w_{1j\gamma}\|\Phi^{(a)}(v_{1})\|^{2} \\
& \quad + 2 \sum_{j \neq 1}\sum_{\gamma \in L} w_{1j\gamma} 
(\hat{\Phi}^{(h)}(v_{j}) + \rho (\gamma)) \cdot \Phi^{(a)}(v_{1}) \\
& = -(w_{010} + W^{(a)}) \left\| \frac{\bm{z}^{(a)}}{w_{010}+W^{(a)}} \right\|^{2} 
+ 2\bm{z}^{(a)} \cdot \frac{\bm{z}^{(a)}}{w_{010}+W^{(a)}} \\
& = \frac{\|\bm{z}^{(a)}\|^{2}}{w_{010}+W^{(a)}}. 
\end{align*}

Since $\Phi^{(h)}$ is harmonic, 
we have $\mathcal{E}(X, \Phi^{(h)}) \leq \mathcal{E}(X, \Phi^{(a)})$. 
Lemma~\ref{lem:weight} implies that $W \leq W^{(a)}$. 
Therefore, 
\begin{align*}
\|\Phi^{(h)}(e_{010})\|^{2} 
& = 
\frac{\mathcal{E}(\hat{X}, \hat{\Phi}^{(h)}) - \mathcal{E}(X, \Phi^{(h)})}{w_{010}+W} \\
& \geq 
\frac{\mathcal{E}(\hat{X}, \hat{\Phi}^{(h)}) - \mathcal{E}(X, \Phi^{(a)})}{w_{010}+W^{(a)}} \\
& = \|\Phi^{(a)}(e_{010})\|^{2} 
\end{align*}
by Theorem~\ref{thm:loss}. 
\end{proof}

\section{Plasticity}
\label{section:plastic}

The plasticity of a material is its ability to undergo permanent deformation. 
For a fixed periodic graph, no external force is applied to a net 
if and only if its realization is standard, 
as shown in Section~\ref{section:stress}. 
Hence, if no local moves occur in the deformation of a net, 
then it returns to its initial state by unloading, and it is perfectly elastic. 
In general, however, local moves cause plasticity. 
To measure plasticity, 
we introduce the energy loss ratio of a net under deformation. 
This is defined by comparing the energy with that of a net 
in which local moves do not occur.

\begin{dfn}
Let $(X_{0}, \Phi_{0})$ be a harmonic net in $\mathbb{R}^{N}$. 
Let us consider fast deformation by $A=A_{1} \in SL(N, \mathbb{R})$ 
or slow deformation by $A_{t} \in SL(N, \mathbb{R})$ for $0 \leq t \leq 1$. 
Suppose that we obtain a harmonic net $(X_{1}, \Phi_{1})$ 
as in Section~\ref{section:model}. 
We define the \emph{energy loss ratio} of $X_{0}$ with respect to $A$ or $A_{t}$ as 
\[
R(X_{0}, \Phi_{0}, A_{(t)}) = 
\frac{\mathcal{E}(X_{0},\Phi_{0}) - \mathcal{E}(A_{1}^{-1}(X_{1},\Phi_{1}))}{\mathcal{E}(X_{0},\Phi_{0})}. 
\]
\end{dfn}

If no local moves occur, then $R = 0$. 
We may regard the ratio $R$ as a degree of destruction. 
Note that $R$ may be negative by some occurrence of contractions.

\bigskip

Consider the uniaxial extension by 
\[
A(\lambda) = \diag \left( \lambda, \lambda^{-1/(N-1)}, \dots , \lambda^{-1/(N-1)} \right) 
\in SL(N, \mathbb{R}) 
\]
and $A_{t} = A(\lambda^{t})$. 
The permanent strain $\epsilon_{0}$ was defined in Section~\ref{section:stress}, 
by setting $A_{1}^{-1}(X_{1},\Phi_{1})$ as the reference position.

\begin{prop}
\label{prop:e0vsR}
Suppose that $\Phi_{0}$ is standard, no contractions occur in the deformation, 
and $R = R(X_{0}, \Phi_{0}, A_{t}) < 1/N$. 
Then, the permanent strain $\epsilon_{0}$ satisfies that 
\[
\left( 1 - \frac{N}{N-1}R \right)^{(N-1)/2N} -1 \leq 
\epsilon_{0} \leq (1-NR)^{-(N-1)/2N} -1. 
\]
\end{prop}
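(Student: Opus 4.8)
The plan is to reduce everything to a single comparison in the Loewner (positive-semidefinite) order, namely
\[
\mathcal{T}' := \mathcal{T}(A_1^{-1}(X_1, \Phi_1)) \preceq \frac{\mathcal{E}_0}{N} I, \qquad \mathcal{E}_0 := \mathcal{E}(X_0, \Phi_0),
\]
and then to feed this into Proposition~\ref{prop:permanent}. Write $\tau_{ij}$ for the entries of $\mathcal{T}'$ in coordinates where the first axis is the extension direction, and set $P = \tau_{11}$ and $S = \tau_{22} + \dots + \tau_{NN}$, so that $P + S = \mathcal{E}(A_1^{-1}(X_1, \Phi_1)) = (1-R)\mathcal{E}_0$ by the definition of $R$. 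The displayed comparison gives $P \le \mathcal{E}_0/N$ and $\tau_{ii} \le \mathcal{E}_0/N$ for each $i \ge 2$, hence $S \le (N-1)\mathcal{E}_0/N$, and therefore $\frac{\mathcal{E}_0}{N}(1-NR) \le P \le \frac{\mathcal{E}_0}{N}$. Since Proposition~\ref{prop:permanent} gives $1+\epsilon_0 = \left(S/((N-1)P)\right)^{(N-1)/2N}$ and, after substituting $S = (1-R)\mathcal{E}_0 - P$, the bracket is a decreasing function of $P$, the upper bound on $P$ yields the lower bound for $\epsilon_0$ and the lower bound on $P$ yields the upper bound; one checks that the resulting brackets are exactly $1 - \frac{N}{N-1}R$ and $(1-NR)^{-1}$. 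The hypothesis $R < 1/N$ is precisely what keeps both positive. This step is routine algebra.

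To prove the comparison I would follow the deformation path and track the tension tensor pulled back to the reference frame with period $\rho_0$. Between local moves the graph $X^{(k)}$ is fixed and, by Corollary~\ref{cor:harmonic}, the current net is $A_t(X^{(k)}, \Psi_k)$, where $\Psi_k$ is the $\rho_0$-harmonic realization of $X^{(k)}$; hence the reference-frame tension tensor $T_k := \mathcal{T}(X^{(k)}, \Psi_k)$ is constant on each such interval. At the start the graph is $X_0$ and $\Phi_0$ is standard, so $T_0 = \frac{\mathcal{E}_0}{N} I$ by Theorem~\ref{thm:standard} together with Proposition~\ref{prop:trace}; at the end $A_1^{-1}$ undoes the last deformation, so $\mathcal{T}'$ equals the final $T_k$. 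Thus it suffices to show that each local move does not increase $T_k$ in the Loewner order, and here the hypothesis that no contractions occur is essential: only splittings appear.

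For a single splitting the idea is to identify its reverse with a contraction and invoke Theorem~\ref{thm:loss}. If $v$ splits into $v_0, v_1$ with new non-loop edge $e'$ of weight $w'$, then contracting $X^{(k+1)}$ along $e'$ recovers $X^{(k)}$ with its original weights, so Theorem~\ref{thm:loss}, applied in the reference frame with $e_{010} = e'$, gives
\[
T_k - T_{k+1} = \mathcal{T}(X^{(k)}, \Psi_k) - \mathcal{T}(X^{(k+1)}, \Psi_{k+1}) = \frac{\bm{z}^{\otimes 2}}{w' + W}.
\]
Since $w' \ge 0$ and $W > 0$ by Lemma~\ref{lem:weight}, the right-hand side is positive semidefinite, so $T_{k+1} \preceq T_k$; chaining the finitely many splittings gives $\mathcal{T}' \preceq \frac{\mathcal{E}_0}{N} I$. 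The main obstacle is exactly this last step. One must verify that a splitting as defined in Section~\ref{section:move}---with its redistribution of the loop weight among $e_0', e_1', e'$ and its assignment of the remaining edges to $v_0$ or $v_1$---is genuinely the inverse of a single-edge contraction of the type governed by Theorem~\ref{thm:loss}, that the graphs stay connected so that the realizations $\Psi_k$ are well defined, and that the theorem may be read off with period $\rho_0$ even though the splitting condition is imposed in the deformed frame (consistency is ensured by the fact that $W$ is independent of the period homomorphism). The positive-semidefinite sign of $\bm{z}^{\otimes 2}/(w'+W)$ is what upgrades the mere energy drop at a splitting into the directional statement on $\tau_{11}$ and on $\tau_{22} + \dots + \tau_{NN}$ that the two-sided bound requires.
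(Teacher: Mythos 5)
Your proposal is correct and follows essentially the same route as the paper: the paper likewise deduces $\tau_{ii}\le\mathcal{E}_{0}/N$ for every $i$ from Theorem~\ref{thm:loss} (each splitting subtracts the positive-semidefinite matrix $\bm{z}^{\otimes 2}/(w_{010}+W)$ from the tension tensor), combines this with $\tau_{11}+\dots+\tau_{NN}=(1-R)\mathcal{E}_{0}$, and substitutes into Proposition~\ref{prop:permanent} exactly as you do. Your explicit Loewner-order bookkeeping through the deformation and the frame-change discussion merely spell out a step the paper disposes of in one line.
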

Approximations 
\[
\left( 1 - \frac{N}{N-1}R \right)^{(N-1)/2N} -1 \sim -\frac{1}{2}R, \
(1-NR)^{-(N-1)/2N} -1 \sim \frac{N-1}{2}R 
\]
hold when $R \sim 0$. 
One might expect that $\epsilon_{0} \geq 0$ for $\lambda >1$, 
but it does not generally hold, 
because the directions of deformation and splittings 
do not necessarily coincide. 

\begin{proof}
Write $\mathcal{E}_{0} = \mathcal{E}(X_{0},\Phi_{0})$, 
$\mathcal{E}_{1} = \mathcal{E}(A_{1}^{-1}(X_{1},\Phi_{1}))$, and 
$(\tau_{ij}) = \mathcal{T}(A_{1}^{-1}(X_{1},\Phi_{1}))$. 
Then, $\mathcal{T}(X_{0},\Phi_{0}) = (\mathcal{E}_{0} / N) I$, 
$\mathcal{E}_{1} = \tau_{11} + \dots + \tau_{NN}$, and 
$R = 1 - \mathcal{E}_{1} / \mathcal{E}_{0}$. 
Since only splittings occur, we have 
$\tau_{ii} \leq \mathcal{E}_{0} / N$ for each $1 \leq i \leq N$ 
by Theorem~\ref{thm:loss}. 
Hence, 
\[
\tau_{11} = \mathcal{E}_{1} - (\tau_{22} + \dots + \tau_{NN}) 
\geq \mathcal{E}_{1} - \dfrac{N-1}{N}\mathcal{E}_{0}. 
\]

Proposition~\ref{prop:permanent} implies that 
\[
(1 + \epsilon_{0})^{2N/(N-1)} = \frac{\tau_{22} + \dots + \tau_{NN}}{(N-1)\tau_{11}} 
= \frac{\mathcal{E}_{1} - \tau_{11}}{(N-1)\tau_{11}} 
= \frac{1}{N-1} \left( \frac{\mathcal{E}_{1}}{\tau_{11}} - 1 \right). 
\]
Since $\tau_{11} \leq \mathcal{E}_{0} / N$, 
we have 
$\mathcal{E}_{1} / \tau_{11} \geq N\mathcal{E}_{1} / \mathcal{E}_{0} = N(1-R)$. 
Since $R < 1/N$ and $\tau_{11} \geq \mathcal{E}_{1} - ((N-1)/N)\mathcal{E}_{0}$, 
we have 
\[
\dfrac{\mathcal{E}_{1}}{\tau_{11}} \leq \left( 1 -\dfrac{N-1}{N(1-R)} \right)^{-1} 
= \dfrac{N(1-R)}{1-NR}. 
\] 
Therefore, 
\[
1 - \frac{N}{N-1}R \leq (1 + \epsilon_{0})^{2N/(N-1)} \leq \frac{1}{1-NR}. 
\]
\end{proof}

In the remainder of this section, 
we consider the simplest case of splitting; 
let $X_{0}$ be a periodic graph such that only a single vertex exists in each period. 
We identify each $\bm{i} = (i_{1}, \dots , i_{N}) \in \mathbb{Z}^{N}$ with a vertex of $X_{0}$. 
Let $e_{\bm{i}}$ denote the edge of $X_{0}$ joining $0$ and $\bm{i} \in \mathbb{Z}^{N}$. 
We write $w_{\bm{i}}$ for the weight of $e_{\bm{i}}$. 
It is necessary that $w_{\bm{i}} = w_{-\bm{i}}$. 
Let $\rho \colon \mathbb{Z}^{N} \to \mathbb{R}^{N}$ be a period homomorphism, 
and let $(\bm{u}_{1}, \dots , \bm{u}_{N})$ be a basis of $\rho (\mathbb{Z}^{N})$. 
The period homomorphism $\rho$ induces a periodic realization $\Phi_{0}$ of $X_{0}$ 
such that the image of vertices is $\rho (\mathbb{Z}^{N})$. 
For $\bm{i} \in \mathbb{Z}^{N}$, 
the edge $e_{\bm{i}}$ corresponds to 
$\bm{v}_{\bm{i}} = i_{1}\bm{u}_{1} + \dots + i_{N}\bm{u}_{N} \in \mathbb{R}^{N}$. 
Then, $\Phi_{0}$ is a harmonic realization. 
Let $I \subset \mathbb{Z}^{N}$ such that $\mathbb{Z}^{N} = I \sqcup -I \sqcup \{ 0 \}$. 
Let $X_{1}$ be a periodic graph obtained from $X_{0}$ 
by splitting the vertex $0$ into $v_{0}$ and $v_{1}$ 
so that $v_{0}$ and $v_{1}$ are endpoints of $e^{\prime}_{\bm{i}}$ 
for $\bm{i} \in -I$ and $\bm{i} \in I$, respectively,
where $e^{\prime}_{\bm{i}}$ is an edge of $X_{1}$ obtained from $e_{\bm{i}}$. 
We write $w_{0}^{\prime}$ for the weight of the new non-loop edge. 
Let $\Phi_{1}$ be a harmonic realization of $X_{1}$ with the period homomorphism $\rho$. 
We may assume that $\Phi_{1}(v_{0}) = 0$. Let $\bm{x} = \Phi_{1}(v_{1})$.

For $\bm{u} \neq 0 \in \mathbb{R}^{N}$, 
let $\mathbb{R}^{N}_{\bm{u}} = \{ \bm{x} \in \mathbb{R}^{N} \mid \bm{u} \cdot \bm{x} > 0 \}$ 
and $I_{\bm{u}} = \mathbb{R}^{N}_{\bm{u}} \cap \mathbb{Z}^{N}$. 
If $I_{\bm{u}} \subset I$, 
the vertices split in the direction of $\bm{u}$. 
Subsequently, it is possible to regard the net $(X_{1},\Phi_{1})$ as a result of slow deformation. 
However, we do not require this assumption unless otherwise stated.

After we show general behaviour, 
we give some examples of the energy loss ratio 
$R = (\mathcal{E}(X_{0}, \Phi_{0}) - \mathcal{E}(X_{1}, \Phi_{1})) / \mathcal{E}(X_{0}, \Phi_{0})$ 
depending on the weights $\{w_{\bm{i}}\}_{\bm{i} \in \mathbb{Z}^{N}}$.

\begin{prop}
\label{prop:loss1v}
It holds that 
\begin{align*}
\bm{x} 
& = \frac{\sum_{\bm{i} \in I}w_{\bm{i}}\bm{v}_{\bm{i}}}{w_{0}^{\prime} + \sum_{\bm{i} \in I}w_{\bm{i}}}, \\
\mathcal{E}(X_{0},\Phi_{0}) - \mathcal{E}(X_{1},\Phi_{1}) 
& = \frac{\left\|\sum_{\bm{i} \in I}w_{\bm{i}}\bm{v}_{\bm{i}}\right\|^{2}}{w_{0}^{\prime} + \sum_{\bm{i} \in I}w_{\bm{i}}}
= \left( w_{0}^{\prime} + \sum_{\bm{i} \in I}w_{\bm{i}} \right) \| \bm{x} \|^{2}. 
\end{align*}
\end{prop}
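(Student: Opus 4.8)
The plan is to reduce everything to the harmonic condition of Theorem~\ref{thm:harmonic} applied at the single new vertex $v_1$, together with a direct bookkeeping of the energy via Definition~\ref{dfn:energy}. Since $X_0$ has one vertex per period, in the quotient its edges are the $e_{\bm i}$ (with $e_{\bm i}$ and $e_{-\bm i}$ the two orientations of one $1$-cell), and $\Phi_0(e_{\bm i}) = \bm v_{\bm i}$. After splitting, $X_1$ has the two vertices $v_0, v_1$ per period, and I would first pin down the combinatorics of the edges issuing from $v_1$. Normalizing $\Phi_1(v_0) = 0$ and setting $\bm x = \Phi_1(v_1)$, equivariance gives $\Phi_1(v_0^{\bm i}) = \bm v_{\bm i}$ and $\Phi_1(v_1^{\bm i}) = \bm x + \bm v_{\bm i}$ for the $\bm i$-translated lifts. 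The one point needing care is that, for $\bm i \in I$, the edge $e'_{\bm i}$ originates at the base lift of $v_1$ and \emph{terminates at the $v_0$-copy translated by $\bm i$} (because its reverse is the $\bm i$-translate of $e'_{-\bm i}$, whose origin is $v_0$ as $-\bm i \in -I$); hence its displacement is $\bm v_{\bm i} - \bm x$. The new non-loop edge $e'$, arising from the genuine loop at the origin, runs from $v_1$ to $v_0$ within the same period with displacement $-\bm x$ and weight $w_0'$, while any surviving loops on $v_0$ or $v_1$ contribute the zero vector.

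Theorem~\ref{thm:harmonic} at $v_1$ then reads $\sum_{\bm i \in I} w_{\bm i}(\bm v_{\bm i} - \bm x) - w_0'\bm x = 0$, with the loops dropping out because they carry zero displacement. Solving for $\bm x$ yields the first displayed formula. (The harmonic equation at $v_0$ turns out to be the negative of this one after invoking $w_{\bm i} = w_{-\bm i}$ and $\bm v_{-\bm i} = -\bm v_{\bm i}$, so it holds automatically and need not be imposed separately.)

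For the energy I would compute, straight from Definition~\ref{dfn:energy} and the convention that $\iota$ counts each $1$-cell twice (cancelling the factor $\tfrac12$), that $\mathcal{E}(X_0,\Phi_0) = \sum_{\bm i \in I} w_{\bm i}\|\bm v_{\bm i}\|^2$ and $\mathcal{E}(X_1,\Phi_1) = \sum_{\bm i \in I} w_{\bm i}\|\bm v_{\bm i} - \bm x\|^2 + w_0'\|\bm x\|^2$. Abbreviating $S = \sum_{\bm i \in I} w_{\bm i}\bm v_{\bm i}$ and $D = w_0' + \sum_{\bm i \in I} w_{\bm i}$, expanding $\|\bm v_{\bm i} - \bm x\|^2$ and cancelling the $\|\bm v_{\bm i}\|^2$ terms gives $\mathcal{E}(X_0,\Phi_0) - \mathcal{E}(X_1,\Phi_1) = 2\,S\cdot\bm x - D\|\bm x\|^2$. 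Substituting $\bm x = S/D$ collapses this to $\|S\|^2/D = D\|\bm x\|^2$, which are exactly the two claimed expressions.

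The only genuine obstacle is the first, combinatorial step: correctly determining that the far endpoint of $e'_{\bm i}$ (for $\bm i \in I$) is a $v_0$-vertex rather than a $v_1$-vertex, and keeping the orientation and $\iota$-conventions straight so that each edge and its reverse are counted consistently in both energies. Once the displacement vectors and their multiplicities are fixed, both identities follow by elementary algebra that parallels the computation of $\Phi^{(a)}(v_1)$ and the energy difference in Theorem~\ref{thm:estimate}, here in the single-vertex setting with $w_0'$ in place of the varying weight.
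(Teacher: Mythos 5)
Your proof is correct. The derivation of $\bm{x}$ is exactly the paper's: write down the harmonic condition of Theorem~\ref{thm:harmonic} at $v_{1}$, namely $-w_{0}^{\prime}\bm{x}+\sum_{\bm{i}\in I}w_{\bm{i}}(\bm{v}_{\bm{i}}-\bm{x})=0$, and solve. Your care with the combinatorics (the far endpoint of $e^{\prime}_{\bm{i}}$ for $\bm{i}\in I$ being a translate of $v_{0}$, and the redundancy of the harmonic equation at $v_{0}$ by the symmetry $w_{\bm{i}}=w_{-\bm{i}}$, $\bm{v}_{-\bm{i}}=-\bm{v}_{\bm{i}}$) is exactly the point that makes this equation the right one, and the paper leaves it implicit. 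Where you diverge is the energy identity: the paper obtains it by specializing Theorem~\ref{thm:loss}, identifying $w_{010}=w_{0}^{\prime}$, $\bm{z}=\sum_{\bm{i}\in I}w_{\bm{i}}\bm{v}_{\bm{i}}$ and $W=\sum_{\bm{i}\in I}w_{\bm{i}}$, whereas you expand $\mathcal{E}(X_{1},\Phi_{1})=\sum_{\bm{i}\in I}w_{\bm{i}}\|\bm{v}_{\bm{i}}-\bm{x}\|^{2}+w_{0}^{\prime}\|\bm{x}\|^{2}$ directly and substitute $\bm{x}=S/D$. Your route is more elementary and self-contained (it avoids the Cramer's-rule and derivative machinery behind Theorem~\ref{thm:loss}), at the cost of not exhibiting the identification with $\bm{z}$ and $W$ that the paper then exploits in its closing remark (that $\Phi_{1}$ coincides with $\Phi^{(a)}$ of Theorem~\ref{thm:estimate} and that $W$ attains the upper bound of Lemma~\ref{lem:weight}) and later in Theorem~\ref{thm:limratio} and Proposition~\ref{prop:blend}. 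Both arguments are valid; if you keep the direct computation, it is worth stating the dictionary with Theorem~\ref{thm:loss} anyway, since the rest of Section~\ref{section:plastic} uses it.
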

\begin{proof}
The condition of harmonic realization 
$-w_{0}^{\prime}\bm{x} + \sum_{\bm{i} \in I} w_{\bm{i}}(\bm{v}_{\bm{i}} - \bm{x}) = 0$ 
implies the presentation of $\bm{x}$. 
We have $w_{010} = w_{0}^{\prime}$, $\bm{z} = \sum_{\bm{i} \in I}w_{\bm{i}}\bm{v}_{\bm{i}}$, 
and $W = \sum_{\bm{i} \in I}w_{\bm{i}}$ in the notation of Theorem~\ref{thm:loss}. 
Thus, we obtain the presentation of $\mathcal{E}(X_{0},\Phi_{0}) - \mathcal{E}(X_{1},\Phi_{1})$. 

We remark that $\Phi_{1}$ coincides with $\Phi^{(a)}$ in Theorem~\ref{thm:estimate}. 
Moreover, $W$ attains the maximum in Lemma~\ref{lem:weight}. 
\end{proof}

By way of example, 
we suppose that the weights are given by a function of the lengths of edges.

\begin{thm}
\label{thm:limratio}
Let $F(\bm{x})$ be a non-negative function on $\mathbb{R}^{N}$ 
such that $F(-\bm{x}) = F(\bm{x})$. 
Put $w_{\bm{i}} = F(s\bm{v}_{\bm{i}})$ for $s>0$. 
Suppose that the sum 
$\sum_{\bm{i} \in I}F(s\bm{v}_{\bm{i}})\|\bm{v}_{\bm{i}}\|^{2}$ is convergent. 
Let $p = w_{0}^{\prime} / w_{0}$, 
which is regarded as the probability that a loop changes into a non-loop edge. 
Suppose that there is $\bm{u} \neq 0 \in \mathbb{R}^{N}$ such that $I_{\bm{u}} \subset I$. 
Then, the energy loss ratio $R(s,p)$ satisfies 
\[
\lim_{s \to 0} R(s,p) = 
\frac{\left\| \int_{\mathbb{R}^{N}_{\bm{u}}} F(\bm{x})\bm{x}d\bm{x} \right\|^{2}}{\int_{\mathbb{R}^{N}_{\bm{u}}} F(\bm{x})d\bm{x} \int_{\mathbb{R}^{N}_{\bm{u}}} F(\bm{x})\|\bm{x}\|^{2}d\bm{x}}, 
\]
where we suppose that the three integrals are finite. 
Moreover, if $F(0)>0$ and 
$\lim\limits_{s \to \infty} \sum_{\bm{i} \in I}F(s\bm{v}_{\bm{i}}) = 0$, 
then $\lim\limits_{s \to \infty} R(s,p) = 0$ for any fixed $p>0$. 
\end{thm}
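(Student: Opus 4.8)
The plan is to start from the closed form of $R(s,p)$ provided by Proposition~\ref{prop:loss1v} and then treat the two limits by completely different elementary means. Abbreviate
\[
A(s)=\sum_{\bm{i}\in I}F(s\bm{v}_{\bm{i}})\bm{v}_{\bm{i}},\qquad
B(s)=\sum_{\bm{i}\in I}F(s\bm{v}_{\bm{i}}),\qquad
C(s)=\sum_{\bm{i}\in I}F(s\bm{v}_{\bm{i}})\|\bm{v}_{\bm{i}}\|^{2}.
\]
Since $\bm{v}_{\bm{0}}=0$ we have $w_{0}=F(0)$ and $w_{0}^{\prime}=pF(0)$, and the energy of the initial net is $\mathcal{E}(X_{0},\Phi_{0})=C(s)$. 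Hence Proposition~\ref{prop:loss1v} gives
\[
R(s,p)=\frac{\|A(s)\|^{2}}{\bigl(pF(0)+B(s)\bigr)\,C(s)}.
\]
Everything reduces to understanding $A$, $B$, $C$ as $s\to0$ and $s\to\infty$.

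For the limit $s\to\infty$ I would use only the triangle inequality followed by Cauchy--Schwarz,
\[
\|A(s)\|\le\sum_{\bm{i}\in I}F(s\bm{v}_{\bm{i}})\|\bm{v}_{\bm{i}}\|
\le B(s)^{1/2}C(s)^{1/2},
\]
so that $\|A(s)\|^{2}\le B(s)C(s)$ and therefore
\[
R(s,p)\le\frac{B(s)C(s)}{\bigl(pF(0)+B(s)\bigr)C(s)}=\frac{B(s)}{pF(0)+B(s)}.
\]
As $F(0)>0$ is fixed and $B(s)=\sum_{\bm{i}\in I}F(s\bm{v}_{\bm{i}})\to0$ by hypothesis, the right-hand side tends to $0$; this settles the second assertion (when $C(s)=0$ one has $A(s)=0$ and $R=0$ directly).

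For the limit $s\to0$ the plan is to read the three sums as Riemann sums for the lattice $s\rho(\mathbb{Z}^{N})$, whose fundamental domain has covolume $s^{N}V$ with $V=\vol(\mathbb{R}^{N}/\rho(\mathbb{Z}^{N}))$. Writing $\bm{v}_{\bm{i}}=s^{-1}(s\bm{v}_{\bm{i}})$ to absorb the powers of $\|\bm{v}_{\bm{i}}\|$, I would establish
\[
s^{N+1}V\,A(s)\to\int_{\mathbb{R}^{N}_{\bm{u}}}F(\bm{y})\bm{y}\,d\bm{y},\quad
s^{N}V\,B(s)\to\int_{\mathbb{R}^{N}_{\bm{u}}}F(\bm{y})\,d\bm{y},\quad
s^{N+2}V\,C(s)\to\int_{\mathbb{R}^{N}_{\bm{u}}}F(\bm{y})\|\bm{y}\|^{2}\,d\bm{y}.
\]
Granting these, multiply numerator and denominator of $R(s,p)$ by $s^{2N+2}V^{2}$: the numerator is $\|s^{N+1}V A(s)\|^{2}$, and the denominator factors as $\bigl(s^{N}V(pF(0)+B(s))\bigr)\bigl(s^{N+2}V\,C(s)\bigr)$ in which the term $s^{N}V\cdot pF(0)\to0$ drops out --- this is exactly where $s\to0$ is used to discard the fixed loop weight. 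All powers of $s$ and $V$ cancel, leaving the asserted ratio of integrals.

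The hard part will be the rigorous justification of the three Riemann-sum limits, since the domain is the unbounded half-space $\mathbb{R}^{N}_{\bm{u}}$ and the integrands $F\bm{y}$ and $F\|\bm{y}\|^{2}$ are neither bounded nor necessarily rapidly decaying. I would argue by splitting each sum into the part inside a large ball and the part outside: on the ball the scaled lattice sums converge to the integral by uniform continuity of $F$, and the tails are controlled uniformly in $s$ by the standing hypotheses that the three integrals are finite and that $\sum_{\bm{i}\in I}F(s\bm{v}_{\bm{i}})\|\bm{v}_{\bm{i}}\|^{2}$ converges, through a dominated-convergence estimate. A secondary point, easily dispatched, is that $I$ coincides with the half-lattice $I_{\bm{u}}$ only up to lattice points on the hyperplane $\bm{u}\cdot\bm{x}=0$; these lie in an at most $(N-1)$-dimensional set, so after scaling they contribute at order $s^{-(N-1)}$, one power of $s$ below the leading $s^{-N}$ term, and hence do not affect any of the limits.
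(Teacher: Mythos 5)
Your proposal is correct and follows essentially the same route as the paper: both reduce to the closed form $R(s,p)=\|A(s)\|^{2}/\bigl((pF(0)+B(s))C(s)\bigr)$ from Proposition~\ref{prop:loss1v}, treat $s\to 0$ by rescaling the three sums into Riemann sums over the half-space, and treat $s\to\infty$ by factoring out $B(s)/(pF(0)+B(s))$ together with the bound $\|A(s)\|^{2}\le B(s)C(s)$ (which the paper phrases as $R(s,0)\le 1$). Your extra care about the tail estimates and the lattice points on the hyperplane $\bm{u}\cdot\bm{x}=0$ only tightens a step the paper states without detail.
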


By the second assertion, 
we may understand that the material has lower plasticity 
if the proportion of loops is large. 

\begin{proof}
Since $\|\bm{v}_{\bm{i}}\| \geq 1$ for all but finitely many $\bm{i}$, 
the sums $\sum_{\bm{i} \in I}F(s\bm{v}_{\bm{i}})$ and 
$\left\|\sum_{\bm{i} \in I}F(s\bm{v}_{\bm{i}})\bm{v}_{\bm{i}}\right\|$ are absolutely convergent. 
The volume per period is given by 
$\mathcal{V} = \det 
\begin{pmatrix}
\bm{u}_{1} & \cdots & \bm{u}_{N} 
\end{pmatrix}$. 
By Proposition~\ref{prop:loss1v}, 
\[
R(s,p) 
= \frac{\left\|\sum_{\bm{i} \in I}F(s\bm{v}_{\bm{i}})\bm{v}_{\bm{i}}\right\|^{2}}{\left( pF(0)+\sum_{\bm{i} \in I}F(s\bm{v}_{\bm{i}}) \right) \sum_{\bm{i} \in I}F(s\bm{v}_{\bm{i}})\|\bm{v}_{\bm{i}}\|^{2}}. 
\]
Hence,
\begin{align*}
\lim_{s \to 0} R(s,p) 
& = \lim_{s \to 0}  \frac{\left\| s^{N}\mathcal{V} \sum_{\bm{i} \in I}F(s\bm{v}_{\bm{i}})s\bm{v}_{\bm{i}}\right\|^{2}}{\left( s^{N}\mathcal{V} pF(0) + s^{N}\mathcal{V} \sum_{\bm{i} \in I}F(s\bm{v}_{\bm{i}}) \right) s^{N}\mathcal{V} \sum_{\bm{i} \in I}F(s\bm{v}_{\bm{i}})\|s\bm{v}_{\bm{i}}\|^{2}} \\
& = \lim_{s \to 0} \frac{ \left\| s^{N}\mathcal{V} \sum_{\bm{i} \in I}F(s\bm{v}_{\bm{i}})s\bm{v}_{\bm{i}}\right\|^{2}}{\left( s^{N}\mathcal{V} \sum_{\bm{i} \in I}F(s\bm{v}_{\bm{i}}) \right) s^{N}\mathcal{V} \sum_{\bm{i} \in I}F(s\bm{v}_{\bm{i}})\|s\bm{v}_{\bm{i}}\|^{2}} \\
& = \frac{\left\| \int_{\mathbb{R}^{N}_{\bm{u}}} F(\bm{x})\bm{x}d\bm{x} \right\|^{2}}{\int_{\mathbb{R}^{N}_{\bm{u}}} F(\bm{x})d\bm{x} \int_{\mathbb{R}^{N}_{\bm{u}}} F(\bm{x})\|\bm{x}\|^{2}d\bm{x}}, 
\end{align*}
where the Riemann sums converge to the Riemann integrals. 

Suppose that $p>0$, $F(0)>0$, and 
$\lim\limits_{s \to \infty} \sum_{\bm{i} \in I}F(s\bm{v}_{\bm{i}}) = 0$. 
Since $R(s,0) \leq 1$, we have 
\begin{align*}
\lim_{s \to \infty} R(s,p) 
& = \lim_{s \to \infty} \frac{\sum_{\bm{i} \in I}F(s\bm{v}_{\bm{i}})}{pF(0) + \sum_{\bm{i} \in I}F(s\bm{v}_{\bm{i}})} R(s,0) \\
& = 0.  
\end{align*}
\end{proof}


For example, we use the normal distribution, 
given by the function 
$F(\bm{x}) = (2\pi \sigma^{2})^{-N/2} \exp \left( -\|\bm{x}\|^{2} / 2\sigma^{2} \right)$ 
for $\bm{x} \in \mathbb{R}^{N}$ and $\sigma > 0$. 
Then, 
\begin{align*}
\lim_{\sigma \to \infty} R(\sigma,p) 
& = \frac{\left\| \int_{\mathbb{R}^{N}_{\bm{u}_{1}}} e^{-\|\bm{x}\|^{2}}\bm{x}d\bm{x} \right\|^{2}}{\int_{\mathbb{R}^{N}_{\bm{u}_{1}}} e^{-\|\bm{x}\|^{2}}d\bm{x}  \int_{\mathbb{R}^{N}_{\bm{u}_{1}}} e^{-\|\bm{x}\|^{2}}\|\bm{x}\|^{2}d\bm{x}} \\
& = \frac{\left( \int_{0}^{\infty} e^{-x^{2}}xdx \left( \int_{\mathbb{R}} e^{-x^{2}}dx \right)^{N-1} \right) ^{2}}{\frac{1}{2} \left( \int_{\mathbb{R}} e^{-x^{2}}dx \right)^{N} 
\frac{N}{2} \int_{\mathbb{R}} e^{-x^{2}}x^{2}dx \left( \int_{\mathbb{R}} e^{-x^{2}}dx \right)^{N-1}} \\
& = \frac{4 \left( \int_{0}^{\infty} e^{-x^{2}}xdx \right) ^{2}}{N \int_{\mathbb{R}} e^{-x^{2}}dx 
\int_{\mathbb{R}} e^{-x^{2}}x^{2}dx} \\
& = \frac{2}{N\pi}, 
\end{align*}
and $\lim_{\sigma \to 0} R(\sigma,p) = 0$ for $p>0$. 
Note that constant multiples of the weights do not change the energy loss ratio $R$. 
Generally, 
if the function $F(s\bm{x})$ for each fixed $\bm{x} \neq 0 \in \mathbb{R}^{N}$ 
monotonically decreases for $s \geq 0$ and $\sum_{\bm{i} \in I} F(s\bm{v}_{\bm{i}}) < \infty$, 
then $\lim\limits_{s \to \infty} \sum_{\bm{i} \in I}F(s\bm{v}_{\bm{i}}) = 0$.

Next, we consider the weight functions given by the linear sums 
$w_{s,\bm{i}} = (1-s)w_{0,\bm{i}} + sw_{1,\bm{i}}$ for $0 \leq s \leq 1$. 
The weight function $w_{s,\bm{i}}$ for each $s$ induces the energy 
\[
\mathcal{E}_{s} = \mathcal{E}((X_{0},w_{s,\bm{i}}),\Phi_{0}) 
= \sum_{\bm{i} \in I} w_{s,\bm{i}} \|\bm{v}_{\bm{i}}\|^{2} 
= (1-s)\mathcal{E}_{0} + s\mathcal{E}_{1}
\] 
and the energy loss ratio 
$R_{s} = \|\bm{z}_{s}\|^{2} / \overline{W}_{s}\mathcal{E}_{s}$ by Proposition~\ref{prop:loss1v}, 
where 
\begin{align*}
\bm{z}_{s} 
& = \sum_{\bm{i} \in I}w_{s,\bm{i}}\bm{v}_{\bm{i}} 
= (1-s)\bm{z}_{0} + s\bm{z}_{1}, \\
\overline{W}_{s} 
& = pw_{s,0} + \sum_{\bm{i} \in I} w_{s,\bm{i}} 
= (1-s)\overline{W}_{0} + s\overline{W}_{1}. 
\end{align*}
We consider the way in which $R_{s}$ depends on the compounding ratio $s$. 
Since $\|\bm{z}_{0}\|^{2} = \overline{W}_{0}\mathcal{E}_{0}R_{0}$ 
and $\|\bm{z}_{1}\|^{2} = \overline{W}_{1}\mathcal{E}_{1}R_{1}$, 
we have 
\[
R_{s} = 
\frac{\|(1-s)\bm{z}_{0} + s\bm{z}_{1}\|^{2}}{\overline{W}_{s}\mathcal{E}_{s}}
= 
\frac{(1-s)^{2}\overline{W}_{0}\mathcal{E}_{0}R_{0} + s^{2}\overline{W}_{1}\mathcal{E}_{1}R_{1} + 2(1-s)s\bm{z}_{0} \cdot \bm{z}_{1}}{\overline{W}_{s}\mathcal{E}_{s}}. 
\]

\begin{prop}
\label{prop:blend}
Suppose that $R_{0} = R_{1}$. 
Then 
\[
R_{s} = R_{0} - \frac{(1-s)s}{\overline{W}_{s}\mathcal{E}_{s}}
\left( (\overline{W}_{0}\mathcal{E}_{1}+\overline{W}_{1}\mathcal{E}_{0})R_{0} - 2\bm{z}_{0} \cdot \bm{z}_{1} \right) \leq R_{0}. 
\]
The equality holds if and only if 
$\mathcal{E}_{0} / \overline{W}_{0} = \mathcal{E}_{1} / \overline{W}_{1}$ 
and the vectors $\bm{z}_{0}$ and $\bm{z}_{1}$ are parallel. 
For fixed $w_{0,\bm{i}}$ and $w_{1,\bm{i}}$, 
the minimum of $R_{s}$ is attained 
when 
\[
s = \hat{s} = \dfrac{\sqrt{\overline{W}_{0}\mathcal{E}_{0}}}{\sqrt{\overline{W}_{0}\mathcal{E}_{0}} + \sqrt{\overline{W}_{1}\mathcal{E}_{1}}}. 
\]
\end{prop}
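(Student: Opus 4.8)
The plan is to begin from the expanded expression for $R_s$ displayed immediately before the statement and reorganise it. Putting $R_1 = R_0$ and inserting the identities $\|\bm{z}_0\|^2 = \overline{W}_0 \mathcal{E}_0 R_0$ and $\|\bm{z}_1\|^2 = \overline{W}_1 \mathcal{E}_1 R_0$, I would compute $R_0 - R_s$ over the common denominator $\overline{W}_s \mathcal{E}_s$, using $R_0 = R_0 \overline{W}_s \mathcal{E}_s / (\overline{W}_s \mathcal{E}_s)$ and expanding $\overline{W}_s \mathcal{E}_s = [(1-s)\overline{W}_0 + s\overline{W}_1][(1-s)\mathcal{E}_0 + s\mathcal{E}_1]$. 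The $(1-s)^2$ and $s^2$ contributions cancel against the matching terms in the numerator of $R_s$, and only the cross term survives, leaving precisely $R_0 - R_s = \frac{(1-s)s}{\overline{W}_s \mathcal{E}_s}\bigl((\overline{W}_0 \mathcal{E}_1 + \overline{W}_1 \mathcal{E}_0)R_0 - 2\bm{z}_0 \cdot \bm{z}_1\bigr)$, which is the claimed identity.

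Next, since the prefactor $(1-s)s/(\overline{W}_s \mathcal{E}_s)$ is non-negative on $[0,1]$, the inequality $R_s \le R_0$ reduces to showing that the bracket $D := (\overline{W}_0 \mathcal{E}_1 + \overline{W}_1 \mathcal{E}_0)R_0 - 2\bm{z}_0 \cdot \bm{z}_1$ is non-negative. I would chain two elementary estimates: Cauchy--Schwarz, giving $\bm{z}_0 \cdot \bm{z}_1 \le \|\bm{z}_0\|\,\|\bm{z}_1\| = R_0\sqrt{\overline{W}_0 \mathcal{E}_0\,\overline{W}_1 \mathcal{E}_1}$ (here $R_0 \ge 0$ because only splittings occur), and AM--GM, giving $\overline{W}_0 \mathcal{E}_1 + \overline{W}_1 \mathcal{E}_0 \ge 2\sqrt{\overline{W}_0 \mathcal{E}_0\,\overline{W}_1 \mathcal{E}_1}$. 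Together these bound $D$ below by $2R_0\sqrt{\overline{W}_0 \mathcal{E}_0\,\overline{W}_1 \mathcal{E}_1} - 2R_0\sqrt{\overline{W}_0 \mathcal{E}_0\,\overline{W}_1 \mathcal{E}_1} = 0$. Equality forces equality in both steps: the AM--GM step gives $\overline{W}_0 \mathcal{E}_1 = \overline{W}_1 \mathcal{E}_0$, that is $\mathcal{E}_0/\overline{W}_0 = \mathcal{E}_1/\overline{W}_1$, and the Cauchy--Schwarz step gives that $\bm{z}_0$ and $\bm{z}_1$ are parallel (with the same orientation, so that $\bm{z}_0 \cdot \bm{z}_1 = \|\bm{z}_0\|\,\|\bm{z}_1\|$); this is the stated equality condition.

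For the location of the minimum, note that $D$ does not depend on $s$, so minimising $R_s$ is the same as maximising $g(s) = (1-s)s/(\overline{W}_s \mathcal{E}_s)$ on $(0,1)$ (when $D=0$ the function $R_s$ is constant and $\hat{s}$ is a minimiser trivially). I would maximise $g$ by minimising $1/g$: substituting the ratio $r = (1-s)/s$ and expanding, $1/g$ becomes $\overline{W}_0 \mathcal{E}_0\, r + (\overline{W}_0 \mathcal{E}_1 + \overline{W}_1 \mathcal{E}_0) + \overline{W}_1 \mathcal{E}_1 / r$. Applying AM--GM to the two $r$-dependent terms, the minimum of $1/g$ is attained exactly when $\overline{W}_0 \mathcal{E}_0\, r = \overline{W}_1 \mathcal{E}_1/r$, i.e. $r = \sqrt{\overline{W}_1 \mathcal{E}_1}/\sqrt{\overline{W}_0 \mathcal{E}_0}$; solving $(1-s)/s = r$ for $s$ yields $\hat{s} = \sqrt{\overline{W}_0 \mathcal{E}_0}/(\sqrt{\overline{W}_0 \mathcal{E}_0} + \sqrt{\overline{W}_1 \mathcal{E}_1})$, which lies in $(0,1)$ since both quantities are positive.

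The computations are all routine; the only points demanding care are the bookkeeping in the cancellation of the $(1-s)^2$ and $s^2$ terms in the first step, and the observation in the equality analysis that Cauchy--Schwarz equality alone permits anti-parallel vectors, so one must check that the sign is forced to be $+$ for genuine equality $R_s = R_0$ (the anti-parallel case makes $D$ strictly larger, hence $R_s$ strictly smaller). I do not anticipate a substantive obstacle beyond this.
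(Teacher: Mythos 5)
Your proposal is correct and follows essentially the same route as the paper: verify the algebraic identity for $R_{0}-R_{s}$, bound the bracket via Cauchy--Schwarz together with $(\sqrt{\overline{W}_{0}\mathcal{E}_{1}}-\sqrt{\overline{W}_{1}\mathcal{E}_{0}})^{2}\geq 0$, and locate $\hat{s}$ by maximising $(1-s)s/(\overline{W}_{s}\mathcal{E}_{s})$ through the substitution $r=(1-s)/s$ (the paper uses $s(1-s)^{-1}$, which is the same computation). Your extra remark that Cauchy--Schwarz equality must be attained with the positive sign (excluding anti-parallel $\bm{z}_{0},\bm{z}_{1}$) is a small point of care that the paper leaves implicit.
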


We may understand that 
a material with lower plasticity is obtained 
by blending two materials. 
Note that 
if $I = I_{\bm{u}}$ for $\bm{u} \neq 0 \in \mathbb{R}^{N}$, 
the vectors $z_{0}$ and $z_{1}$ are likely to be nearly parallel to $\bm{u}$. 

\begin{proof}
It is easy to check the presentation of $R_{s}$. 
The inequality $R_{s} \leq R_{1}$ follows from 
\[
2\bm{z}_{0} \cdot \bm{z}_{1} \leq 2 \|\bm{z}_{0}\| \|\bm{z}_{1}\| 
= 2\sqrt{\overline{W}_{0}\mathcal{E}_{0}\overline{W}_{1}\mathcal{E}_{1}}R_{0} 
\leq (\overline{W}_{0}\mathcal{E}_{1}+\overline{W}_{1}\mathcal{E}_{0})R_{0}, 
\]
where 
the first inequality is the Cauchy--Schwarz inequality, 
and the second follows from 
$(\sqrt{\overline{W}_{0}\mathcal{E}_{1}} - \sqrt{\overline{W}_{1}\mathcal{E}_{0}})^{2} \geq 0$. 
An easy calculation shows that 
\begin{align*}
\frac{(1-s)s}{\overline{W}_{s}\mathcal{E}_{s}} 
& = \frac{(1-s)s}{((1-s)\overline{W}_{0} + s\overline{W}_{1})((1-s)\mathcal{E}_{0} + s\mathcal{E}_{1})} \\
& = \frac{s(1-s)^{-1}}{(\overline{W}_{0} + s(1-s)^{-1}\overline{W}_{1})(\mathcal{E}_{0} + s(1-s)^{-1}\mathcal{E}_{1})}
\end{align*}
increases for $0 < s  < \hat{s}$ and decreases for $\hat{s} < s <1$. 
\end{proof}

We remark that 
the linear sums of weights with different energy loss ratios 
do not yield smaller ratios in general. 
For instance, the linear sums of $w_{0,\bm{i}}$ and $w_{\hat{s},\bm{i}}$ 
as in Proposition~\ref{prop:blend} 
give the energy loss ratios between $R_{0}$ and $R_{\hat{s}}$.

Under the assumption that $R_{0} = R_{1}$, we have 
\[
R_{\hat{s}} = 
\frac{R_{0} + \bm{z}_{0} \cdot \bm{z}_{1} / \sqrt{\overline{W}_{0}\mathcal{E}_{0}\overline{W}_{1}\mathcal{E}_{1}}}{1 + (\overline{W}_{0}\mathcal{E}_{1}+\overline{W}_{1}\mathcal{E}_{0}) / 2\sqrt{\overline{W}_{0}\mathcal{E}_{0}\overline{W}_{1}\mathcal{E}_{1}}} 
\leq 
\frac{2R_{0}}{1 + (\overline{W}_{0}\mathcal{E}_{1}+\overline{W}_{1}\mathcal{E}_{0}) / 2\sqrt{\overline{W}_{0}\mathcal{E}_{0}\overline{W}_{1}\mathcal{E}_{1}}} 
\]
and 
\[
\frac{\overline{W}_{0}\mathcal{E}_{1}+\overline{W}_{1}\mathcal{E}_{0}}{2\sqrt{\overline{W}_{0}\mathcal{E}_{0}\overline{W}_{1}\mathcal{E}_{1}}} 
= \dfrac{x^{2}+1}{2x}, 
\]
where $x = \sqrt{\overline{W}_{0}\mathcal{E}_{1} / \overline{W}_{1}\mathcal{E}_{0}}$. 
If $x$ or $x^{-1}$ is large, then $R_{\hat{s}}$ is nearly equal to zero.

For example, we consider the cube lattice. 
Suppose that $(\bm{u}_{1}, \dots , \bm{u}_{N})$ is the standard basis of $\mathbb{R}^{N}$. 
For $m \in \mathbb{Z}_{>0}$, 
let $w_{0,0} = w_{1,0} = a$, $w_{0,\pm \bm{u}_{k}} = w_{1,\pm m\bm{u}_{k}}=1$ for any $k=1, \dots , N$, 
and $w_{0,\bm{i}} = w_{1,\bm{i}} =0$ for the other $\bm{i}$. 
Let $\bm{u} = (1, \dots , 1)$ and $I = I_{\bm{u}}$. 
Then, 
$\overline{W}_{0} = \overline{W}_{1} = pa+N$, 
$\mathcal{E}_{0} = N$, $\mathcal{E}_{1} = m^{2}N$, 
$\bm{z}_{0} = (1, \dots , 1)$, $\bm{z}_{1} = (m, \dots , m)$, 
and $R_{0} = R_{1} = 1/(pa+N)$. 
Since $\hat{s} = 1/(1+m)$, 
we have $R_{\hat{s}} = 4m(1+m)^{-2}R_{0}$. 
Consequently, $\mathcal{E}_{\hat{s}} = mN$ and $\lim\limits_{m \to \infty} R_{\hat{s}} = 0$.

We give another example 
by using the linear sums of normal distributions. 
Suppose that $w_{k,\bm{i}} = 
(2\pi \sigma_{k}^{2})^{-N/2} 
\exp \left( -\|\bm{v}_{\bm{i}}\|^{2} / 2\sigma_{k}^{2} \right)$ 
for $k=0,1$ and $\sigma_{k} > 0$. 
Fix $\mu = \sigma_{1} / \sigma_{0}$. 
Theorem~\ref{thm:limratio} implies that 
\[
\tilde{R}_{s} = \lim\limits_{\sigma_{0} \to \infty} R_{s} = 
\frac{2(1-s+s\mu)^{2}}{N\pi (1-s+s\mu^{2})}. 
\]
It attains the minimum 
when $s = \hat{s} = 1/(1+\mu)$. 
Hence, $\tilde{R}_{0} = \tilde{R}_{1} = 2/N\pi$, 
$\tilde{R}_{\hat{s}} = 4\mu(1+\mu)^{-2}\tilde{R}_{0}$, 
and $\lim\limits_{\mu \to \infty} \tilde{R}_{\hat{s}} = 0$. 
We remark that 
$\lim\limits_{\sigma_{0} \to \infty} \overline{W}_{1} / \overline{W}_{0} = 1$ and 
$\lim\limits_{\sigma_{0} \to \infty} \mathcal{E}_{1} / \mathcal{E}_{0} = \mu^{2}$.

\section{Examples of deformation}
\label{section:ex}

In this section, we give examples of deformation for two-dimensional nets. 
Let 
\[
A(\lambda) = 
\begin{pmatrix}
\cos \theta & -\sin \theta \\
\sin \theta & \cos \theta
\end{pmatrix}
\begin{pmatrix}
\lambda & 0 \\
0 & \lambda^{-1}
\end{pmatrix}
\begin{pmatrix}
\cos \theta & \sin \theta \\
-\sin \theta & \cos \theta
\end{pmatrix}. 
\] 
We consider a uniaxial extension with strain $\epsilon = \lambda -1$ 
in the direction of an angle $\theta$ from the horizontal axis, 
namely, the slow deformation by $A_{t} = A(\lambda^{t})$.

For the splitting of a vertex $v$ into $v_{0}$ and $v_{1}$, 
let $e, e^{\prime}_{0}, e^{\prime}_{1}$, and $e^{\prime}_{01}$ respectively denote 
the loops on $v,v_{0},v_{1}$ and the non-loop edge between $v_{0}$ and $v_{1}$. 
Suppose that the weight function $w$ satisfies 
$w(e^{\prime}_{0}) = w(e^{\prime}_{1}) = w(e)/4$ 
and $w(e^{\prime}_{01}) = w(e)/2$.

Let $X^{(m)}$ denote the periodic graph after the $m$-th local move, 
and let $\Phi^{(m)}$ be a harmonic realization of $X^{(m)}$. 
Suppose that $\Phi^{(0)}$ is standard, 
and all the period homomorphisms for $\Phi^{(m)}$ are common. 
We write $\mathcal{E}^{(m)}(\lambda) = \mathcal{E}(A(\lambda)(X^{(m)},\Phi^{(m)}))$. 
The engineering stress is 
$\sigma_{\mathrm{eng}}^{(m)}(\lambda) 
= \dfrac{1}{\mathcal{V}} \dfrac{d\mathcal{E}^{(m)}}{d\lambda}$. 
The permanent strain $\epsilon_{0}^{(m)}$ is the number satisfying 
$\sigma_{\mathrm{eng}}^{(m)}(1+\epsilon_{0}^{(m)}) = 0$. 
The energy loss ratio is 
$R^{(m)} = 1 - \mathcal{E}^{(m)}(1) / \mathcal{E}^{(0)}(1)$.

\subsection{Hexagonal lattice}

Let $X^{(0)}$ be a periodic graph of which the edges consist of 
loops and the 1-skeleton of the hexagonal tiling. 
We assume that the period is minimal. 
Let $w_{0}$ and $w_{1}$ be the weights of each loop and non-loop edge, respectively. 
A standard realization $\Phi^{(0)}$ of $X^{(0)}$ is given as follows 
(Figure~\ref{fig: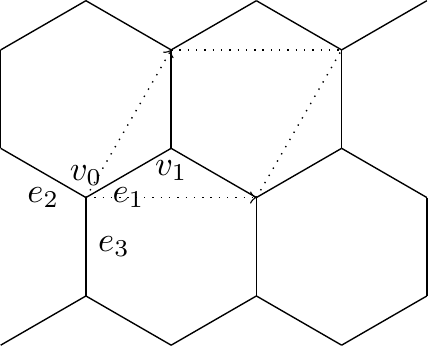}): 
\begin{enumerate}[(i)]
\item for $l>0$, the vectors 
	$\bm{u}_{1} = (\sqrt{3}l,0)$ and $\bm{u}_{2} = ((\sqrt{3}/2) l, (3/2) l)$ 
	form a basis of the lattice; 
\item representatives $v_{0}$ and $v_{1}$ of the vertices are mapped to 
	$\Phi^{(0)}(v_{0}) = (0,0)$ and $\Phi^{(0)}(v_{1}) = ((\sqrt{3}/2) l, (1/2) l)$; and 
\item The non-loop edges $e_{1}, e_{2}$, and $e_{3}$ originating from $v_{0}$ are mapped to 
	$\Phi^{(0)}(e_{1}) = ((\sqrt{3}/2) l, (1/2) l), 
	\Phi^{(0)}(e_{2}) = (-(\sqrt{3}/2) l, (1/2) l)$, 
	and $\Phi^{(0)}(e_{3}) = (0,-l)$. 
\end{enumerate}

Then, the volume per period is $\mathcal{V} = (3\sqrt{3}/2) l^{2}$. 
The energy per period is $\mathcal{E}^{(0)} = \mathcal{E}(X^{(0)},\Phi^{(0)}) = 3w_{1}l^{2}$. 
Young's modulus is $E = 4\mathcal{E}_{0} / \mathcal{V} = (8\sqrt{3}/3)w_{1}$. 
The tension tensors around the vertex $v_{0}$ and $v_{1}$ are 
$\mathcal{T}(v_{0}) = \mathcal{T}(v_{1}) = 
\begin{pmatrix}
(3/2)w_{1}l^{2} & 0 \\
0 & (3/2)w_{1}l^{2}
\end{pmatrix}$.

\twofig{width=6cm}{hex0.pdf}{The hexagonal lattice}{width=6cm}{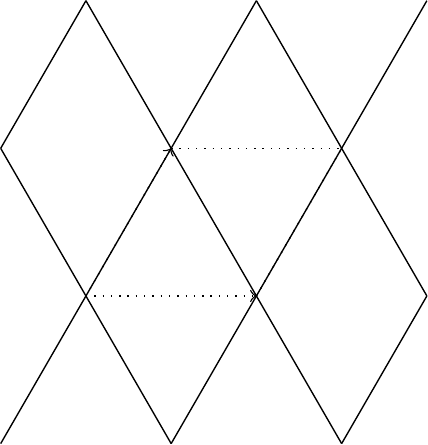}{The graph $X^{(1)}$}

\twofig{width=6cm}{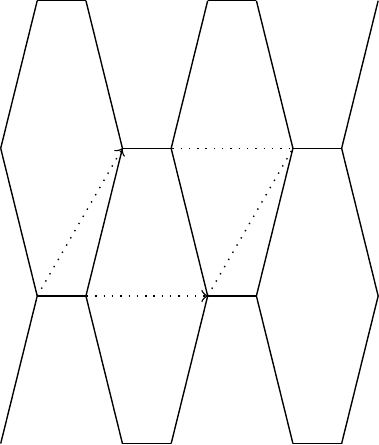}{The graph $X^{(2)}$}{width=6cm}{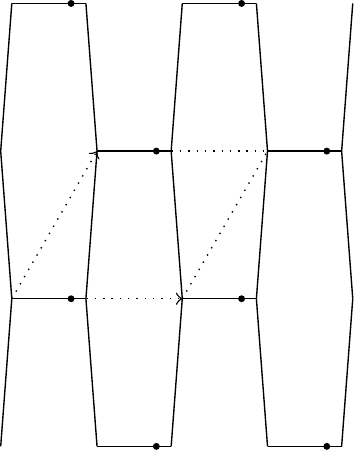}{The graph $X^{(3)}$}

\subsubsection{The case $\theta = 0$}

Consider the slow deformation by $A_{t} = A(\lambda^{t})$ for $\theta = 0$. 
Then, 
\[
\mathcal{E}^{(0)}(\lambda) = \frac{3}{2}w_{1}l^{2}(\lambda^{2}+\lambda^{-2}), \quad 
\sigma_{\mathrm{eng}}^{(0)}(\lambda) = \frac{2\sqrt{3}}{3}w_{1}(\lambda-\lambda^{-3}). 
\]


Suppose that 
$1 < \delta^{-1}l < \sqrt{2K_{2w_{0}+3w_{1}} / 3w_{1}l^{2}}$. 
When $\lambda^{t} = \delta^{-1}l$, the edge $e_{3}$ contracts to a loop, 
where no splittings occurred earlier. 
We obtain a periodic graph $X^{(1)}$ 
as shown in Figure~\ref{fig:hex11.pdf}. 
The edges of $X^{(1)}$ consist of loops with weight $2w_{0}+w_{1}$ and the 1-skeleton of the square tiling. 
Then, 
\[
\mathcal{E}^{(1)}(\lambda) = \frac{3}{2}w_{1}l^{2}(\lambda^{2}+3\lambda^{-2}), \quad 
\sigma_{\mathrm{eng}}^{(1)}(\lambda) = \frac{2\sqrt{3}}{3}w_{1}(\lambda-3\lambda^{-3}), 
\]
\[
\epsilon_{0}^{(1)} = \sqrt[4]{3}-1 \approx 0.316, \quad 
R^{(1)} = -1. 
\]

Furthermore, when $\lambda^{t} = \sqrt{K_{4w_{0}+6w_{1}} / 3w_{1}l^{2}}$, 
the vertices of $X^{(1)}$ split. 
We obtain a periodic graph $X^{(2)}$
as shown in Figure~\ref{fig:hex12.pdf}. 
The weights of each loop and new non-loop edge are, respectively, 
$(1/2)w_{0}+(1/4)w_{1}$ and $w_{0}+(1/2)w_{1}$. 
Then, 
\begin{align*}
\mathcal{E}^{(2)}(\lambda) &= \mathcal{E}^{(1)}(\lambda) - \frac{3w_{1}^{2}l^{2}}{w_{0}+(5/2)w_{1}}\lambda^{2} \\ 
&= \frac{3}{2}w_{1}l^{2} \left( \frac{2w_{0}+w_{1}}{2w_{0}+5w_{1}}\lambda^{2}+3\lambda^{-2} \right), \\
\sigma_{\mathrm{eng}}^{(2)}(\lambda) 
&= \frac{2\sqrt{3}}{3}w_{1} \left( \frac{2w_{0}+w_{1}}{2w_{0}+5w_{1}}\lambda-3\lambda^{-3} \right), 
\end{align*}
\[
\epsilon_{0}^{(2)} = \sqrt[4]{\frac{3(2w_{0}+5w_{1})}{2w_{0}+w_{1}}}-1, \quad 
R^{(2)} = -\frac{2w_{0}+3w_{1}}{2w_{0}+5w_{1}}. 
\]
(The first equality is also obtained by Theorem~\ref{thm:loss}.) 
It holds that 
$\sqrt[4]{3}-1 < \epsilon_{0}^{(2)} \leq \sqrt[4]{15}-1 \approx 0.967$. 
If $w_{0} = w_{1}$, 
then $\epsilon_{0}^{(2)} = \sqrt[4]{7}-1 \approx 0.626$. 
We remark that $\epsilon_{0}^{(2)}$ decreases 
as $w_{0} / w_{1}$ increases.

Furthermore, when $\lambda^{t} = \sqrt{\dfrac{2w_{0}+5w_{1}}{2w_{0}+w_{1}} \dfrac{2K_{2w_{0}+3w_{1}}}{3w_{1}l^{2}}}$, 
vertices of $X^{(2)}$ split. 
For genericity, 
we suppose that a single vertex per period splits. 
(We may assume that a representative thereof is at the origin.) 
We obtain a periodic graph $X^{(3)}$ 
as shown in Figure~\ref{fig:hex13.pdf}. 
Then, 
\begin{align*}
\mathcal{E}^{(3)}(\lambda) 
&= \mathcal{E}^{(2)}(\lambda) - 
\frac{2w_{0}+w_{1}}{2w_{0}+5w_{1}} \frac{24w_{1}^{2}l^{2}}{2w_{0}+21w_{1}}\lambda^{2} \\ 
&= \frac{3}{2}w_{1}l^{2} \left( \frac{2w_{0}+w_{1}}{2w_{0}+21w_{1}}\lambda^{2}+3\lambda^{-2} \right), \\
\sigma_{\mathrm{eng}}^{(3)}(\lambda) 
&= \frac{2\sqrt{3}}{3}w_{1} \left( \frac{2w_{0}+w_{1}}{2w_{0}+21w_{1}}\lambda-3\lambda^{-3} \right), 
\end{align*}
\[
\epsilon_{0}^{(3)} = \sqrt[4]{\frac{3(2w_{0}+21w_{1})}{2w_{0}+w_{1}}}-1, \quad 
R^{(3)} = -\frac{2w_{0}+11w_{1}}{2w_{0}+21w_{1}}. 
\]
It holds that 
$\sqrt[4]{3}-1 \leq \epsilon_{0}^{(3)} \leq \sqrt[4]{63}-1 \approx 1.817$. 
If $w_{0} = w_{1}$, 
then $\epsilon_{0}^{(3)} = \sqrt[4]{23}-1 \approx 1.189$.

\twofig{width=6.5cm}{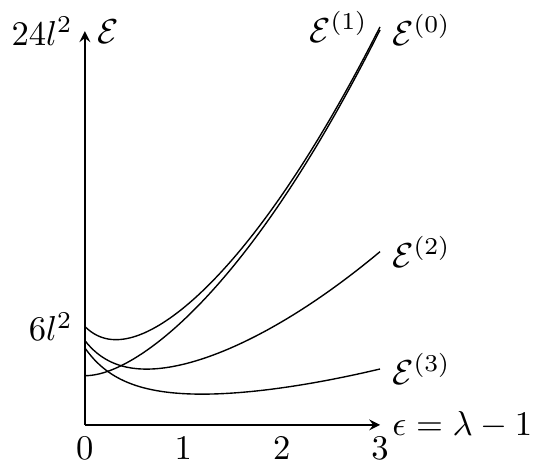}{Energies for deformation in the case $\theta = 0$}{width=6cm}{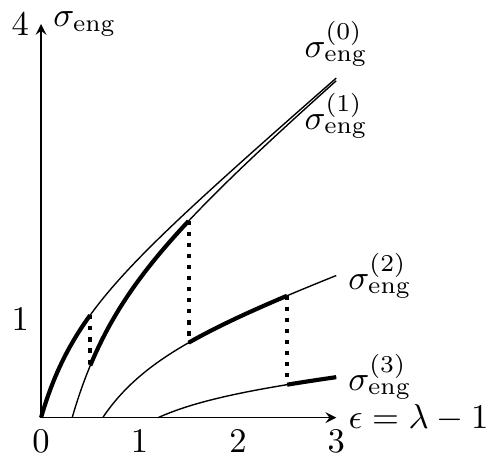}{Stress--strain curve in the case $\theta = 0$}

Similarly, 
more splittings of vertices may occur. 
Subsequently, the length of a path of edges increases. 
For $w_{0} = w_{1} = 1$, 
the energies are shown in Figure~\ref{fig:hex1energy.pdf}, 
and the stress--strain curve is drawn as the thick discontinuous curve 
in Figure~\ref{fig:hex1stress.pdf}. 
Note that if we take a larger period, 
the occurrences of local moves change by genericity. 
We must also consider a contraction when $\lambda^{t} = 3\delta^{-1}l$. 
With this consideration, the stress--strain curve also changes. 
It would be desirable for 
these stress--strain curves to converge to a continuous curve 
as the periods expand.

\subsubsection{The case $\theta = \pi/6$}

Consider the slow deformation by $A_{t} = A(\lambda^{t})$ for $\theta = \pi/6$. 
Then, only splittings occur. 
We state only a result: 
\[
\mathcal{E}^{(0)}(\lambda) = \frac{3}{2}w_{1}l^{2}(\lambda^{2}+\lambda^{-2}), \quad 
\sigma_{\mathrm{eng}}^{(0)}(\lambda) = \frac{2\sqrt{3}}{3}w_{1}(\lambda-\lambda^{-3}) 
\quad \text{(same as above)}, 
\]
\begin{align*}
\mathcal{E}^{(1)}(\lambda) &= \frac{3}{2}w_{1}l^{2} \left( \frac{3w_{0}}{3w_{0}+4w_{1}}\lambda^{2}+\lambda^{-2} \right), & 
\sigma_{\mathrm{eng}}^{(1)}(\lambda) &= \frac{2\sqrt{3}}{3}w_{1} \left( \frac{3w_{0}}{3w_{0}+4w_{1}}\lambda-\lambda^{-3} \right), \\
\epsilon_{0}^{(1)} &= \sqrt[4]{\frac{3w_{0}+4w_{1}}{3w_{0}}}-1, &  
R^{(1)} &= \frac{2w_{1}}{3w_{0}+4w_{1}}, \\
\mathcal{E}^{(2)}(\lambda) &= \frac{3}{2}w_{1}l^{2} \left( \frac{3w_{0}}{3w_{0}+8w_{1}}\lambda^{2}+\lambda^{-2} \right), & 
\sigma_{\mathrm{eng}}^{(2)}(\lambda) &= \frac{2\sqrt{3}}{3}w_{1} \left( \frac{3w_{0}}{3w_{0}+8w_{1}}\lambda-\lambda^{-3} \right), \\
\epsilon_{0}^{(2)} &= \sqrt[4]{\frac{3w_{0}+8w_{1}}{3w_{0}}}-1, & 
R^{(2)} &= \frac{4w_{1}}{3w_{0}+8w_{1}}, \\
\mathcal{E}^{(3)}(\lambda) &= \frac{3}{2}w_{1}l^{2} \left( \frac{w_{0}}{w_{0}+8w_{1}}\lambda^{2}+\lambda^{-2} \right), & 
\sigma_{\mathrm{eng}}^{(3)}(\lambda) &= \frac{2\sqrt{3}}{3}w_{1} \left( \frac{w_{0}}{w_{0}+8w_{1}}\lambda-\lambda^{-3} \right), \\
\epsilon_{0}^{(3)} &= \sqrt[4]{\frac{w_{0}+8w_{1}}{w_{0}}}-1, & 
R^{(3)} &= \frac{4w_{1}}{w_{0}+8w_{1}}, 
\end{align*}
and so on. 
For $w_{0} = w_{1} = 1$, 
the energies are shown in Figure~\ref{fig: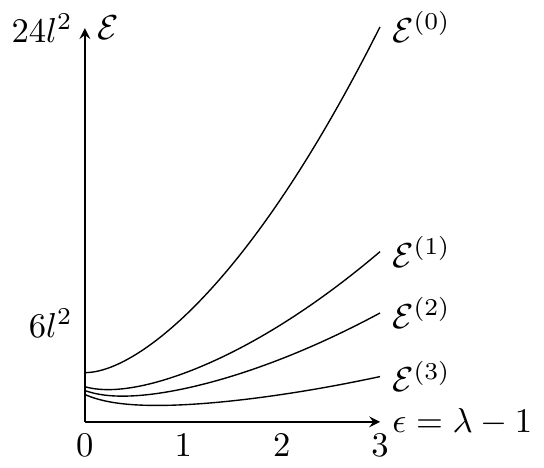}, 
and the stress--strain curve is shown in Figure~\ref{fig: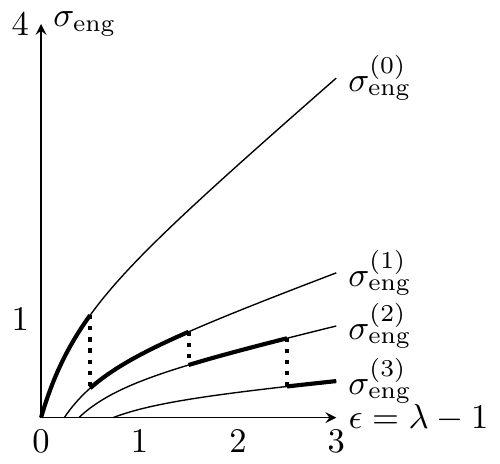}.

\twofig{width=6.5cm}{hex2energy.pdf}{Energies for deformation in the case $\theta = \pi/6$}{width=6cm}{hex2stress.pdf}{Stress--strain curve in the case $\theta = \pi/6$}

\section{Discussion and perspectives} 
\label{section:discussion}

We discuss some open issues 
and propose questions for further research. 
\begin{enumerate}
\item 
The definition of the tension tensor fits with our purely mathematical interest, 
and we can apply it to topological and discrete geometric properties of graphs and nets. 
To define an energy, we have used $E(l) = l^{2}$ 
as an energy of an edge of length $l$. 
What happens when we take another energy function $E(l)$? 
A linear transformation of a harmonic realization is no longer harmonic, 
but a harmonic realization is still unique if $E^{\prime}(l) > 0$ and $E^{\prime \prime}(l) > 0$. 

\item 
In Section~\ref{section:move}, 
we have arbitrarily given the threshold values $\delta$ and $K_{d}$ 
in the conditions of a contraction and a splitting. 
What are the values $\delta$ and $K_{d}$ for an actual TPE? 
How does the value $K_{d}$ depend on the degree $d$? 

\item 
The validity of our model for deformation requires 
the assumption that infinite repetition of local moves does not occur. 
However, mutually inverse splittings and contractions may continue alternatingly 
in some cases. 
Theorem~\ref{thm:compatibility} gives a sufficient condition that 
such alternating repetition does not occur. 
Is another kind of infinite repetition possible? 
If it is possible, what condition is sufficient to avoid such repetition? 

\item 
In Section~\ref{section:ex}, 
we have given simple examples for two-dimensional nets. 
What about three-dimensional nets? 

\item 
Do local moves occur in unloading? 
If not, then it reproduces the Mullins effect~\cite{dfg09review}: 
the stress--strain curve in reloading 
coincides with that in the unloading 
until the maximal strain of the prior loading. 
Figures~\ref{fig:hex1stress.pdf} and \ref{fig:hex2stress.pdf} illustrate this behaviour. 
Even if local moves occur in unloading, 
the inverse local moves in reloading may induce the Mullins effect. 

\item 
We have worked in continuous weights of edges, 
which is useful for mathematical arguments. 
Of course, the weights corresponding to actual polymers are integers. 
However, it would not be meaningful only to restrict ourselves to integer weights. 
Our presented settings preserve periodicity. 
As a result, stress--strain curves are not continuous. 
To obtain a continuum limit of such discrete matters, 
stochastic formulation of local moves will be effective. 
The stochastic formulation in integer weights 
may induce continuous stress--strain curves 
and a non-linear constitutive equation that reflects elastoplasticity. 
We expect that this is also appropriate to describe fracture of materials. 
\end{enumerate}

\section*{Funding} 
This study is supported by JST CREST Grant Number JPMJCR17J4. 
The first author is also supported by the World Premier International Research Center Initiative (WPI), MEXT, Japan. 
The second author is also supported by JSPS KAKENHI Grant Number 19K14530. 

\section*{Acknowledgements} 
The authors are grateful to 
Yoshifumi Amamoto, Tetsuo Deguchi, Ken Kojio, Motoko Kotani, Hiroshi Morita, Ken Nakajima, Genki Omori, and Koya Shimokawa
for their helpful discussions. 
The authors 
would like to thank Editage (www.editage.com) for English language editing, 
and the anonymous reviewers for their suggestions and comments.

\bibliographystyle{siam}
\bibliography{ref-crest1}

\end{document}